\theoremstyle{plain}
\newlist{steps}{enumerate}{1}
\setlist[steps, 1]{label = Step \arabic*:}
\DeclareRobustCommand\widecheck[1]{{\mathpalette\@widecheck{#1}}}
\def\@widecheck#1#2{%
   \setbox\z@\hbox{\m@th$#1#2$}%
   \setbox\tw@\hbox{\m@th$#1%
      \widehat{%
         \vrule\@width\z@\@height\ht\z@
         \vrule\@height\z@\@width\wd\z@}$}%
   \dp\tw@-\ht\z@
   \@tempdima\ht\z@ \advance\@tempdima2\ht\tw@ \divide\@tempdima\thr@@
   \setbox\tw@\hbox{%
      \raise\@tempdima\hbox{\scalebox{1}[-1]{\lower\@tempdima\box\tw@}}}%
   {\ooalign{\box\tw@ \cr \box\z@}}}
\theoremstyle{plain}
\newtheorem{thm}{Theorem}[section]
\crefname{thm}{Theorem}{Theorems}
\Crefname{thm}{Theorem}{Theorems}
\newtheorem{prop}[thm]{Proposition}
\crefname{prop}{Proposition}{Propositions}
\Crefname{prop}{Proposition}{Propositions}
\newtheorem{lem}[thm]{Lemma}
\crefname{lem}{Lemma}{Lemmas}
\Crefname{lem}{Lemma}{Lemmas}
\newtheorem{cor}[thm]{Corollary}
\crefname{cor}{Corollary}{Corollaries}
\Crefname{cor}{Corollary}{Corollaries}
\newtheorem {claim}[thm]{Claim}
\crefname{claim}{Claim}{Claims}
\Crefname{claim}{Claim}{Claims}
\crefname{property}{Property}{Properties}
\Crefname{property}{Property}{Properties}
\crefname{problem}{Problem}{Problems}
\Crefname{problem}{Problem}{Problems}
\newtheorem{ques}[thm]{Question}
\crefname{ques}{Question}{Questions}
\Crefname{ques}{Question}{Questions}
\theoremstyle{definition}
\newtheorem{defn}[thm]{Definition}
\crefname{defn}{Definition}{Definitions}
\Crefname{defn}{Definition}{Definitions}
\crefname{notation}{Notation}{Notations}
\Crefname{notation}{Notation}{Notations}
\crefname{convention}{Convention}{Conventions}
\Crefname{convention}{Convention}{Conventions}
\crefname{cond}{Condition}{Conditions}
\Crefname{cond}{Condition}{Conditions}
\crefname{assum}{Assumption}{Assumptions}
\Crefname{assum}{Assumption}{Assumptions}
\crefname{conj}{Conjecture}{Conjectures}
\Crefname{conj}{Conjecture}{Conjectures}
\theoremstyle{remark}
\newtheorem{rem}[thm]{Remark}
\crefname{rem}{Remark}{Remarks}
\Crefname{rem}{Remark}{Remarks}
\crefname{ex}{Example}{Examples}
\Crefname{ex}{Example}{Examples}
\crefname{section}{Section}{Sections}
\Crefname{section}{Section}{Sections}
\crefname{subsection}{Subsection}{Subsections}
\Crefname{subsection}{Subsection}{Subsections}
\crefname{figure}{Figure}{Figures}
\Crefname{figure}{Figure}{Figures}
\newcommand{\Z}{\mathbb{Z}}
\newcommand{\F}{\mathbb{F}}
\newcommand{\Q}{\mathbb{Q}}
\newcommand{\Int}{\mathrm{Int}}
\newcommand{\rank}{\mathop{\mathrm{rank}}\nolimits}
\newcommand{\Hom}{\mathop{\mathrm{Hom}}\nolimits}
\newcommand{\id}{\mathrm{id}}
\def\om{\omega}
\newcommand{\R}{\mathbb R}
\newcommand{\ctext}[1]{\raise0.2ex\hbox{\textcircled{\scriptsize{#1}}}}
\def\dim{\operatorname{dim}}
\def\rank{\operatorname{rank}}
\def\Hom{\operatorname{Hom}}
\def\id{\operatorname{Id}}
\newcommand{\mbar}[1]{{\ooalign{\hfil#1\hfil\crcr\raise.167ex\hbox{--}}}}
\DeclareMathOperator{\gr}{gr}
\def\u{\mathfrak{u}}
\def\bF{\mathbf{F}}
\DeclareMathOperator{\lk}{lk}
\newcommand{\CKh}{\mathit{CKh}}
\newcommand{\Kh}{\mathit{Kh}}
\newcommand{\Cube}{\operatorname{Cube}}
\def\bC{\mathbf{C}}
\def\bv{\mathbf{v}}
\DeclareMathOperator{\ord}{ord}
\def\mF{\mathcal{F}}
\title[
%Instanton cube complexes
Cobordism maps on instanton cube complexes
]{
    Cobordism maps in Khovanov homology and singular instanton homology I}
\author{Hayato Imori, Taketo Sano, Kouki Sato, Masaki Taniguchi}
\newcommand{\printaddresses}{{\parindent0pt
  \bigskip
  Hayato Imori, \textsc{Department of Mathematical Sciences, KAIST, Daejeon, 34141, Republic of Korea}\\
  \textit{E-mail address}: \url{himori@kaist.ac.kr}\par
  \medskip
  Taketo Sano, \textsc{RIKEN Center for Interdisciplinary Theoretical and Mathematical Sciences(iTHEMS), RIKEN, Wako 351-0198, Japan}\\
  \textit{E-mail address}: \url{taketo.sano@riken.jp}\par
  \medskip
  Kouki Sato, \textsc{Meijo University,Tempaku, Nagoya 468-8502, Japan}\\
  \textit{E-mail address}: \url{ satokou@meijo-u.ac.jp}\par
  \medskip
  Masaki Taniguchi, \textsc{Department of Mathematics, Graduate School of Science, Kyoto University, Kitashirakawa Oiwake-cho,
Sakyo-ku, Kyoto 606-8502, Japan}\par
  \textit{E-mail address}: \url{taniguchi.masaki.7m@kyoto-u.ac.jp}
}}
\begin{document}

\begin{abstract}
    Khovanov homology and singular instanton Floer homology are both functorial with respect to link cobordisms. Although the two homology groups are related by a spectral sequence, direct correspondence between the cobordism maps has not been rigorously established. In this paper, we define a cobordism map on the instanton cube complex as a filtered chain map, and prove that it recovers the cobordism maps both in Khovanov homology and singular instanton theory. 
    In a sequel paper, we further extend this cobordism map to immersed cobordisms. 
\end{abstract}

\maketitle

\section{Introduction}

\textit{Khovanov homology} is a link homology theory introduced by Khovanov in \cite{Khovanov:2000} as a categorification of the Jones polynomial. In \cite{KM11u}, Kronheimer and Mrowka introduced  \textit{framed singular instanton Floer homology} and constructed a spectral sequence%
\footnote{
    For conventional reasons, the input link $L$ for Khovanov homology must be mirrored as $L^*$.
}
having Khovanov homology as its $E_2$ term and abutting to singular instanton homology,
\[
    \Kh (L^*)  \Rightarrow I^\sharp (L)
\]
which led to the proof that \textit{Khovanov homology detects the unknot}%
\footnote{
    In \cite{KM11u}, Kronheimer and Mrowka also construct a spectral sequence for the reduced versions $\widetilde{\Kh}(L^*) \Rightarrow I^\natural (L)$, and the main theorem is deduced from the fact that $\rank \widetilde{\Kh}(K^*) = 1$ implies $\rank I^\natural (K) = 1$ for a knot $K$.
}. 
The two theories are contrasting: Khovanov homology is defined combinatorially and admits direct computations, whereas singular instanton homology is defined analytically and is strongly tied with the geometry of the knot. 
Thus connecting the two theories enhances the strengths of both. For example, the above-stated result implies that the unknottedness of any given knot can be determined algorithmically. \footnote{See \cite{HN13, BSX18, BS22, Ma20, BS22nonfiber, BDLLS21, LS22, BS24} for further results of detections of links. }

The above spectral sequence was obtained by constructing the \textit{instanton cube complex} $\CKh^\sharp (D)$ defined for a given diagram $D$ of a given link $L\subset \R^3$.  
Its homology gives singular instanton  homology $I^\sharp(L)$, together with the \textit{instanton homological filtration} such that the Khovanov complex $\CKh(D^*)$ naturally arises in the $E_1$ term of the induced spectral sequence. Lately in \cite{KM14}, Kronheimer and Mrowka also introduced the \textit{instanton quantum filtration} on $\CKh^\sharp(D)$ and proved that Khovanov homology $\Kh(L^*)$ arises in the $E_1$ term of the induced spectral sequence. Furthermore, for a (possibly non-orientable) link cobordism $S \subset [0,1]\times \R^3$ from $L$ to $L'$ and for diagrams $D$ and $D'$ of $L$ and $L'$, they constructed a \textit{cobordism map} between the instanton cube complexes
\[
    \phi^{KM}_S : \CKh^\sharp (D) \to \CKh^\sharp (D'),
\]
which is a doubly filtered chain map of orders \footnote{In general, given abelian groups with a decreasing filtration and a map $\phi$, we define $\ord(\phi) := \max\{k \in \Z \mid \phi(\mF^i) \subset \mF^{i+k}\}$.} 
\[
 (\operatorname{ord}_h( \phi^{KM}_S), \operatorname{ord}_q( \phi^{KM}_S))    \geq \left(\frac{1}{2}(S \cdot S),\ \chi(S)+\frac{3}{2}(S \cdot S)\right) 
\]
such that its induced map on homology coincides with the cobordism map on singular instanton homology,
\[
    I^\sharp(S): I^\sharp(L) \rightarrow I^\sharp(L').
\]
It is questioned therein, whether the induced map of $\phi^{KM}_S$ on the $E_2$ term (or the $E_1$ term when considering quantum filtration) coincide with the cobordism map $\Kh(S^*)$ of Khovanov homology.\footnote{
    $S^*$ denotes the image of $S$ under $\id \times r$, where $r$ is the reflection on $\R^3$ that gives the mirroring of links. 
}

Meanwhile in \cite{BHL19}, Baldwin, Hedden and Lobb gave a general framework to construct a spectral sequence for any link homology theory that satisfies a set of axioms called the {\it Khovanov--Floer theory}. In particular for singular instanton theory, their construction give rise to a cobordism map $\phi^{KF}_S$ between  (the quasi-isomorphism classes of) the filtered chain complexes. It is proved that the induced map on the $E_2$ term coincides with $\Kh(S^*)$ (\cite[Theorem 3.5]{BHL19}), but it is unclear whether the induced map on homology coincides with $I^\sharp(S)$.
(Their construction of $\phi^{KF}_S$ relies on an algebraic lemma \cite[Lemma 2.4]{BHL19}, which does not have a connection with the geometric construction of $I^\sharp(S)$.)
The following diagram depicts these gaps in the functoriality of the spectral sequence.

\[
\begin{tikzcd} 
& \Kh(S^*) &\\
\phi^{KM}_S \arrow[rd, "\text{induce}"'] \arrow[ru, "\text{induce?}", dashed] & & \phi^{KF}_S \arrow[lu, "\text{induce}"'] \arrow[ld, "\text{induce?}", dashed] \\
& I^\sharp(S) &          
\end{tikzcd}
\]

In this paper, we fill these gaps by constructing another filtered chain map $\phi^\sharp_S$ on the instanton cube complex $\CKh^\sharp$. The idea is to combine the two approaches: for each elementary move $S$ (i.e.\ a Reidemeister move or a Morse move), we further decompose $S$ into elementary moves $S_i$ as in \cite[Section 4]{BHL19}, then compose the corresponding cobordism maps $\phi^{KM}_{S_i}$ of \cite{KM14}. We will prove that this filtered chain map $\phi^\sharp_S$ induces both $\Kh(S^*)$ and $I^\sharp(S)$. To be precise, 

\begin{thm} \label{thm:main}
    Given a link cobordism $S \subset [0,1]\times \R^3 $ from $L$ to $L'$ and diagrams $D$ and $D'$ of $L$ and $L'$ respectively, there is a doubly filtered chain map 
    \[
        \phi^\sharp_S : \CKh^\sharp (D) \to \CKh^\sharp (D')
    \]
    of order
    \[
        \geq \left(\frac{1}{2}(S \cdot S),\ \chi(S)+\frac{3}{2}(S \cdot S)\right)
    \]
    whose induced map on the $E_2$ term with respect to the homological filtration (resp.\ the $E_1$ term with respect to the quantum filtration) coincides with the cobordism map of Khovanov homology
    \[
        \Kh(S^*): \Kh(L^*) \to \Kh(L'^*)
    \]
    and whose induced map on homology coincides with the cobordism map of singular instanton homology
    \[
        I^\sharp(S) : I^\sharp (L) \to I^\sharp (L').
    \]
    Here, $\chi(S)$ denotes the Euler characteristic of $S$ and $S \cdot S$ the normal Euler number.  
\end{thm}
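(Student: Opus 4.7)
The plan is to construct $\phi^\sharp_S$ by decomposing $S$ into a sequence of atomic cobordisms $S_1, \ldots, S_n$, following the decomposition scheme of \cite[Section 4]{BHL19}, where each $S_i$ is either a Reidemeister move, an elementary Morse move (birth, death, or saddle), or a twist contributing to the self-intersection. For each atomic piece I apply the Kronheimer--Mrowka cobordism map $\phi^{KM}_{S_i}$ of \cite{KM14} and set
\[
\phi^\sharp_S \;:=\; \phi^{KM}_{S_n} \circ \cdots \circ \phi^{KM}_{S_1}.
\]
This sidesteps the two partial constructions discussed in the introduction: $\phi^{KM}$ on a general cobordism has known homology-level behavior but unclear $E_2$-level behavior, while $\phi^{KF}$ has the reverse. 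Restricting $\phi^{KM}$ to atomic pieces is what makes a direct $E_2$-level analysis of each factor tractable.

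Three things then need to be verified. For the filtered order, each $\phi^{KM}_{S_i}$ satisfies the bound of \cite{KM14} applied to $S_i$; since both the homological and quantum orders add under composition, and the atomic decomposition partitions $\chi(S)$ and $S\cdot S$ compatibly among the pieces, these bounds sum to the stated order for $S$. For the homology statement, Kronheimer--Mrowka's theorem gives $I^\sharp(\phi^{KM}_{S_i}) = I^\sharp(S_i)$ for each $i$, so functoriality of singular instanton knot Floer homology under composition of cobordisms yields $I^\sharp(\phi^\sharp_S) = I^\sharp(S_n)\circ\cdots\circ I^\sharp(S_1) = I^\sharp(S)$. For the $E_2$-page statement (respectively $E_1$ under the quantum filtration), one argues piece by piece: the induced map of $\phi^{KM}_{S_i}$ should coincide with the elementary Khovanov map attached to $S_i$, and composing these recovers $\Kh(S^*)$ by functoriality of Khovanov homology.

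The main obstacle is precisely this piecewise $E_2$-identification, which is the gap the BHL framework fills at the price of losing homology-level information. To close it directly for each $\phi^{KM}_{S_i}$, I would argue that the Baldwin--Hedden--Lobb analysis is essentially local: for an atomic $S_i$ the induced map on $E_2$ is determined up to chain homotopy within the associated graded, and both $\phi^{KM}_{S_i}$ and the elementary Khovanov map lie in this class. For Morse atoms this reduces to a concrete calculation of $\phi^{KM}_{S_i}$ on the small sub-cube of resolutions near the critical point, matching it with the unit, counit, multiplication, or comultiplication of the Frobenius structure underlying Khovanov homology; for Reidemeister atoms one invokes the invariance chain equivalences of \cite{KM11u, KM14} and their known action on $E_2$. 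A secondary subtlety is choosing the atomic decomposition so that it does not spuriously inflate the filtered order: non-orientable twists must be grouped with saddles in such a way that the local filtered orders remain tight, which is handled by a careful but routine choice of decomposition in the spirit of \cite{BHL19}.
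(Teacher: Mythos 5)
Your overall plan---decompose $S$ into the BHL atomic pieces, define $\phi^\sharp_S$ as the composition of the corresponding $\phi^{KM}$ maps, and then verify the filtered order, the homology-level identification, and the $E_2$-level identification separately---matches the paper's strategy, and your treatments of the filtered order (additivity under composition) and the homology-level statement (functoriality of $I^\sharp$) are both correct and identical to the paper's.

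The genuine gap is in the $E_2$-level step for the Reidemeister atoms. You write that ``for Reidemeister atoms one invokes the invariance chain equivalences of \cite{KM11u, KM14} and their known action on $E_2$,'' but this is precisely the statement that is \emph{not} known; it is the open question raised at the end of \cite{KM14} that the entire paper is designed to address. The BHL framework does \emph{not} say that the $E_2$ map induced by an arbitrary filtered chain homotopy equivalence is uniquely determined up to chain homotopy within the associated graded---if it did, the equality of $\phi^{KM}$ and $\phi^{KF}$ on $E_2$ would be automatic. What BHL proves is that a different, algebraically constructed map $\phi^{KF}_S$ induces $\Kh(S^*)$, and nothing in that argument constrains $\phi^{KM}_{Rn}$. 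So your proposed justification collapses to an assertion of the thing to be proved.

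The way the paper closes this gap is genuinely different from your ``locality'' argument and is worth understanding. First, one notes (via \Cref{excision lemma} and \Cref{disjoint lemma}, both proved with the gauge-theoretic excision cobordism) that disjointly adding a fixed diagram $D_2$ commutes, on the $E_1$/$E^0_q$ level, with both the Kronheimer--Mrowka cobordism maps and the Khovanov tensor-product isomorphism. This lets one replace a Reidemeister atom $S$ by $S \sqcup [0,1]\times D_2$ without changing the question, hence lets one \emph{prepend $0$-handles and append $2$-handles freely}. Second, the Reidemeister atoms are decomposed using the movies of \Cref{fig:movies,fig:RIIImovies} specifically so that they become compositions of the form $\phi \circ (\phi^{KM}_{h^0})^l$ or $(\phi^{KM}_{h^2})^l \circ \phi$, where $\phi$ is an isotopy-type map between diagrams of an unlink. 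The crucial observation is that such a composition factors through the $q$-filtered piece of rank one (coming from $\CKh^\sharp(\emptyset) \cong \Z$), so the induced $E_1$/$E^0_q$ map is determined up to sign by the filtered-isomorphism property alone; since both $\phi^{KM}$ and the Khovanov map enjoy that property, they must agree up to sign. This rank-one reduction is the nontrivial idea that is missing from your proposal, and without it the $E_2$-identification for Reidemeister moves does not go through.

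Your handling of the Morse atoms is essentially right and agrees with \Cref{compatibility with Khovanov} in the paper: for $0$- and $2$-handles this follows from the disjoint-union compatibility, and for $1$-handles it is a direct computation already carried out in \cite{KM11u}.
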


\begin{rem}
    Both $\Kh(S^*)$ and $I^\sharp(S)$ are well-defined up to overall sign, so the statement in \Cref{thm:main} should also be regarded up to overall sign. For a cobordism $S$ consisting only of Morse moves and planar isotopies, similar results have been proved in \cite[Proposition 3.3]{LZ20}.
\end{rem}
For the proof of \Cref{thm:main}, that $\phi^\sharp_S$ induces $I^\sharp(S)$ is almost immediate from the functoriality (isotopy invariance) of $I^\sharp$. The main difficulty lies in proving that $\phi^\sharp_S$ induces $\Kh^\sharp(S^*)$, which is proved by careful observations on the spectral sequence, together with technical lemmas (\Cref{excision lemma,disjoint lemma,compatibility with Khovanov}) stating the behaviors of $\CKh^\sharp$ and its filtrations under the operation of taking the disjoint unions of links.

The following questions are left as future work. 

\begin{ques}
    Do our cobordism map $\phi_S^\sharp$ and Kronheimer--Mrowka's map $\phi^{KM}_S$ coincide up to filtered chain homotopy? 
\end{ques}

\begin{ques}
    Is the cobordism map $\phi_S^\sharp$ isotopy (rel boundary)  invariant up to filtered chain homotopy? 
\end{ques}

\subsection*{Connection with Khovanov-Floer theories}

Various link homology theories admit spectral sequences starting from Khovanov homology, such as the Heegaard Floer homology of branched covers \cite{OS05}, instanton homologies \cite{KM11u, Da15}, the monopole Floer homology of branched covers \cite{B11}, the framed instanton homology of branched covers \cite{Sca15}, Heegaard knot Floer homology \cite{Do24} and real monopole Floer homology \cite{Li24}. A formal treatment of such spectral sequences is given in \cite{BHL19} as {\it Khovanov--Floer theories}. Namely, for any link homology theory $\mathcal{H}$ (over $\F_2$) that arises as the homology of a filtered chain complex satisfying a set of axioms called the {\it Khovanov--Floer theory}, it is proved that it gives rise to a functor
\[
    E^{\mathcal{H}}_*: \operatorname{Link} \to \operatorname{Spec}_{\F_2} 
\]
where $\operatorname{Link}$ denotes the category of link cobordisms and $\operatorname{Spec}_{\F_2} $ denotes the category of spectral sequences over $\F_2$, such that for any link $L$ it gives $E^\mathcal{H}_2(L) \cong \Kh(L^*; \F_2)$ and $E^\mathcal{H}_\infty(L) \cong \mathcal{H}(L)$, and for any link cobordism $S$ the map $E^\mathcal{H}_2(S)$ coincides with $\Kh(S^*; \F_2)$. It is then proved that many of the aforementioned link homology theories (over $\F_2$) are actually Khovanov--Floer theories. 

Here we remark that, even if there exists a filtered chain map $\phi_S$ that induces $\mathcal{H}(S)$ on homology, one cannot tell whether $E^{\mathcal{H}}_\infty(S)$ coincides with the map induced from $\phi_S$.
Now, for the case $\mathcal{H} = I^\sharp$, since our map $\phi^\sharp$ is proved to induce $\Kh(S^*)$ on the $E_2$ term, the consequent induced maps necessarily coincide with the maps obtained from the general framework of Khovanov-Floer theory. Thus \Cref{thm:main} can be rephrased as follows.

\begin{prop}
\label{prop:kf-coincide}
    Let
    \[
        E^\sharp_*: \operatorname{Link} \to \operatorname{Spec}_{\F_2}     
    \]
    be the functor obtained from the Khovanov-Floer theory given by the homological filtration on $\CKh^\sharp$ over $\F_2$. For any link cobordism $S$, the morphism $E^\sharp_*(S)$ coincides (from the $E_2$ term and after) with the morphism induced from the filtered chain map $\phi^\sharp_S$ given in \Cref{thm:main}. In particular, $E^\sharp_\infty(S)$ coincides with the map induced from $\phi^\sharp_S$. 
\end{prop}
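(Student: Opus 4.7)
The strategy is to show that the two morphisms of spectral sequences already coincide on the $E_2$ page, and then appeal to the elementary fact that two morphisms of spectral sequences which agree on $E_r$ automatically agree on every subsequent page. Let $E_*(\phi^\sharp_S)$ denote the morphism of spectral sequences induced by the filtered chain map $\phi^\sharp_S$ of \Cref{thm:main}. Since $E^\sharp_*(L)$ is by definition the spectral sequence associated to the homologically filtered complex $\CKh^\sharp(L)$ over $\F_2$, and the same complex underlies $E_*(\phi^\sharp_S)$, both $E^\sharp_*(S)$ and $E_*(\phi^\sharp_S)$ are morphisms between literally the same spectral sequences $E^\sharp_*(L) \to E^\sharp_*(L')$.

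First I would identify the $E_2$-page action of each morphism. By \cite[Theorem 3.5]{BHL19}, the Khovanov--Floer morphism satisfies $E^\sharp_2(S) = \Kh(S^*;\F_2)$. By \Cref{thm:main}, the chain map $\phi^\sharp_S$ also induces $\Kh(S^*;\F_2)$ on the $E_2$ page. Hence the two morphisms agree on $E_2$.

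Next I would invoke the general principle that a morphism of spectral sequences is determined on $E_{s+1}$ by its action on $E_s$: since $E_{s+1}$ is the homology of $(E_s, d_s)$ and a morphism of spectral sequences commutes with $d_s$, the map on $E_{s+1}$ is the induced map on homology. Iterating from $s = 2$ gives agreement on every $E_s$ with $s \geq 2$. For the final $E_\infty$ statement, since $\CKh^\sharp(L)$ and $\CKh^\sharp(L')$ are bounded filtered complexes, each bidegree $E^{p,q}_\infty$ stabilizes at some finite $r$, so agreement on $E_r$ for large $r$ yields agreement on $E_\infty$.

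There is no serious obstacle here; the proposition is essentially a direct corollary of \Cref{thm:main} combined with \cite[Theorem 3.5]{BHL19}. The only point requiring care is that the two morphisms lie between literally the same spectral sequences, not merely isomorphic ones, which is automatic once one checks that BHL's construction uses the same underlying homologically filtered complex $\CKh^\sharp$ as our framework.
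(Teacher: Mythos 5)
Your proposal is correct and takes essentially the same approach the paper does: the paper states \Cref{prop:kf-coincide} as a rephrasing of \Cref{thm:main}, justified by the observation that $\phi^\sharp_S$ induces $\Kh(S^*)$ on $E_2$ and therefore must agree with the Khovanov--Floer morphism there, after which agreement propagates to all subsequent pages because a morphism of spectral sequences is determined on $E_{r+1}$ by its restriction to $E_r$. Your spelled-out version of the iteration and the finite-stabilization argument for $E_\infty$ are exactly the details the paper leaves implicit.
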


\begin{ques}
    Do analogous statements of \Cref{prop:kf-coincide} hold for other Khovanov-Floer theories such as Heegaard (monopole-tilde, framed instanton) Floer homology for double-branched covers, Heegaard knot Floer homology, plane Floer homology, and real monopole Floer homology?  
\end{ques}

The proof of \cref{thm:main} essentially uses the quantum filtration on $\CKh^\sharp$. Thus we may ask a more specific question, 

\begin{ques}
Do other Khovanov–Floer theories admit maps on cube complexes endowed with a bifiltration, having a similar property to \cref{thm:main}?
\end{ques}

Note that a quantum grading is also introduced in the plane Floer homology in \cite{Dae15}. 

\subsection*{On immersed cobordisms.} 

In \cite{KM13}, Kronheimer and Mrowka extended the cobordism map of $I^\sharp$ to \textit{immersed surfaces} in $[0,1]\times S^3$ using the blow-up construction at the double points originally observed in \cite{Kr97, KM11}. In a sequel paper, we will extend the cobordism map of $\Kh$ to immersed surfaces, and prove that the two maps correspond under the spectral sequence $\Kh \Rightarrow I^\sharp$. 

\subsection*{On local systems}

The aim in \cite{KM13} was to derive an integer-valued knot invariant $s^\sharp$ from $I^\sharp$ with the local system $\Q[T^{\pm 1}]$ (or its certain completion $\Q[[\lambda]]$), as an instanton gauge theoretic analogue of the Rasmussen invariant $s$ derived from (a variant of) Khovanov homology \cite{Ras10}. It is now known that $s$ and $s^\sharp$ are distinct \cite{Gong21} but the relation between the two invariants remains unknown. We expect that our immersed cobordism map can be extended to the setup with local systems, and would potentially lead to understanding the relation between $s$ and $s^\sharp$. 

\subsection*{Organization}

In \cref{Algebraic part of the proofs}, we summarize the algebraic statements needed to prove \Cref{thm:main}. Therein, we also give the explicit description of the filtered chain map $\phi^\sharp_S$. 
In \cref{Preliminaries for instanton theory}, we review the construction of the instanton cube complexes and their homological and quantum filtrations. We also introduce the {\it excision cobordism map} in \cref{Excision cobordism map}, which is the key ingredient of the proof. We prove several fundamental properties of the cobordism maps. 
In \cref{Disjoint formula}, we give the proofs of algebraic lemmas introduced in \cref{Algebraic part of the proofs} and complete the proof of the main result. 
In \cref{ori app}, we discuss orientations of parametrized singular instanton moduli spaces to define the maps over $\Z$. 

\subsection*{Acknowledgements}

The authors would like to thank John Baldwin for some questions regarding \cite{BHL19}. 
The first author and the fourth author gratefully acknowledge support from the Simons Center for Geometry and Physics, Stony Brook University at which the discussion with John has occurred. 

HI is partially supported by the Samsung Science and Technology Foundation (SSTF-BA2102-02) and the Jang Young Sil Fellowship from KAIST.
TS was partially supported by JSPS KAKENHI Grant Number 23K12982 and academist crowdfunding. MT was partially supported
by JSPS KAKENHI Grant Number 22K13921. 
\section{Algebraic part of the proofs}\label{Algebraic part of the proofs}

\subsection{Khovanov complex} 
\label{subsec:Khovanov}

First, we give a brief review of the construction of Khovanov homology, as defined in \cite{Khovanov:2000}. The reader may skip this section if they are familiar with the basic setup. 

Let $D$ be a link diagram with $n$ crossings. Here we assume that an ordering of the crossings is fixed. Each crossing admits a 0-resolution and a 1-resolution, as depicted in  \Cref{fig:1}. 

\begin{figure}[t]
	\centering
    \includegraphics[scale=0.4]{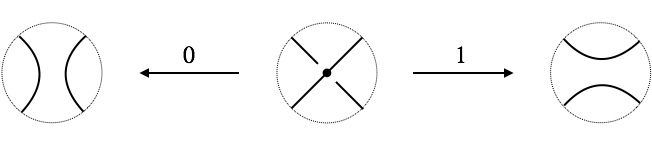}
    \vspace{1em}
	\caption{0-, 1-resolution of a crossing.}
	\label{fig:1}
\end{figure}

\begin{figure}[t]
	\centering
    \resizebox{.9\textwidth}{!}{
    \input{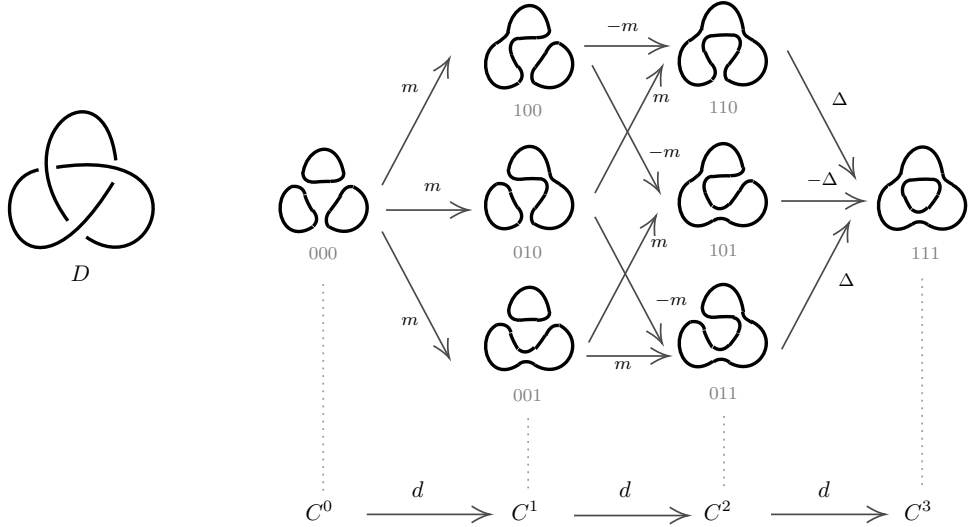}
    }
	\caption{A diagram $D$, its cube of resolutions $\Cube(D)$ and the complex $C(D)$.}
    \label{fig:CKh}
\end{figure}

A simultaneous choice of resolutions for all crossings is called a \textit{state}, which may be identified with an element $v \in \{0, 1\}^n$. The set $\{0, 1\}^n$ is endowed a partial order $\leq$, declared as $u \leq v$ if $u_i \leq v_i$ for each $1 \leq i \leq n$. Any state $v$ yields a diagram $D_v$, consisting of disjoint circles by resolving all crossings of $D$ accordingly. Let $r(D_v)$ denote the number of circles in $D_v$. The $1$-norm of a state $v$ is denoted $|v|_1$ and is called the \textit{weight} of $v$. Two states $u, v$ are \textit{adjacent} if $u < v$ and $|v - u|_1 = 1$. For adjacent states $u, v$, passing from $D_u$ to $D_v$ can be seen as performing a band surgery to the circle(s) of $D_u$ along the corresponding crossing, resulting in either two circles merging into one circle, or one circle splitting into two circles. Let $S_{uv}$ denote the $2$-dimensional cobordism from $D_u$ to $D_v$ that realizes this surgery. By considering all possible states, we obtain a \textit{cube of resolutions} $\Cube(D)$ for $D$, where on each vertex $v \in \{0, 1\}$ the resolved diagram $D_v$ is placed, and on each edge between adjacent vertices $u, v$ the cobordism $S_{uv}: D_u \rightarrow D_v$ is placed. $\Cube(D)$ may be regarded as a commutative cube in the category of $(1+1)$-dimensional cobordisms. \Cref{fig:CKh} depicts $\Cube(D)$ for a trefoil diagram $D$. 

Next, we transform $\Cube(D)$ into a commutative cube in the category of $R$-modules, where $R$ is any commutative ring with unity. (In this paper, we only consider the case $R = \Z$.) Let $V$ denote the free $R$-module generated by two distinct elements $\bv_+, \bv_-$. The $R$-module $V$ is given a \textit{Frobenius algebra} structure with \textit{unit} $\iota: R \rightarrow V$, 
\[
    \iota(1) = \bv_+,
\]
\textit{multiplication} $m: V \otimes V \rightarrow V$,
\begin{gather*}
  	m(\bv_+ \otimes \bv_+) = \bv_+, \\
  	m(\bv_+ \otimes \bv_-) = m(\bv_- \otimes \bv_+) = \bv_-, \\
  	m(\bv_- \otimes \bv_-) = 0,
\end{gather*}
\textit{counit} $\epsilon: V \rightarrow R$,
\[
    \epsilon(\bv_+) = 0,\quad
    \epsilon(\bv_-) = 1, 
\]
and \textit{comultiplication} $\Delta: V \rightarrow V \otimes V$,
\[
    \Delta(\bv_+) = \bv_+ \otimes \bv_- + \bv_- \otimes \bv_+,\quad 
    \Delta(\bv_-) = \bv_- \otimes \bv_-.
\]
We remark that $V$ originally comes from the truncated polynomial ring $R[X]/(X^2)$ and $\bv_+, \bv_-$ are elements corresponding to $1, X$ respectively.

Now, for each state $v$, we define the \textit{vertex module} $V(D_v)$ as the $r(D_v)$-fold tensor product of $V$. Here, $V(D_v)$ is generated by $2^{r(D_v)}$ elements of the form,
\[
    x_1 \otimes \cdots \otimes x_{r(D_v)},
    \quad x_i \in \{ \bv_\pm \}
\]
each of which can be identified with a simultaneous labeling of $\bv_+$ or $\bv_-$ on the circles of $D_v$, called an \textit{enhanced state} of $D$. Note that $V(\emptyset) = R$. 
For each pair of adjacent states $u, v$, we define the \textit{edge map} 
\[
    d_{uv}\colon V(D_u) \rightarrow V(D_v)
\]
as follows: depending on whether $S_{uv}\colon D_u \rightarrow D_v$ is a merge or a split, apply the multiplication $m$ or the comultiplication $\Delta$ to the label(s) on the corresponding circle(s), while leaving other labels unchanged. These assignments may be regarded as a tensor functor $\mathcal{F}$ from the category of $(1 + 1)$-cobordisms to the category of $R$-modules, i.e.\ a $(1 + 1)$-\textit{TQFT}, from which it follows that the resulting cube $\mathcal{F}(\Cube(D))$ is commutative. 

Next, we turn this commutative cube into a skew-commutative one, by taking a \textit{sign assignment} $e$, which is an assignment $e_{uv} \in \{ \pm 1\}$ for each pair of adjacent states $u, v$, such that for any square
\[
    \begin{tikzcd}
    u \arrow[r] \arrow[d] & v \arrow[d] \\
    v' \arrow[r]          & w          
    \end{tikzcd}
\]
we have
\[
    e_{vw} e_{uv} + e_{v'w} e_{uv'} = 0.
\]
For instance, the \textit{standard sign assignment} $e$ is defined as follows: for adjacent vertices $u, v$ whose components differ at index $i$, 
\[
    e_{uv} = (-1)^{\sum_{j < i} u_j}.
\]

Although sign assignments are not unique, given any two sign assignments $e, e'$, there is a unique vertex-wise transformation $f$ from $e$ to $e'$. This can be seen by regarding any sign assignment $e$ as a $1$-cochain of the cellular cochain complex of the $n$-dimensional cube $K(n) = [0, 1]^n$ over $\mathbb{F}_2$,
\[
    e \in C^1(K(n); \mathbb{F}_2)
\]
such that its coboundary $\delta e$ is the $2$-cochain that evaluates any $2$-cell $\sigma$ of $K(n)$ to $1$. Since $\delta(e - e') = 0$ and $K(n)$ is acyclic, there is a $0$-cochain $f$ such that $e - e' = \delta f$. See \cite[Definition 4.5]{LS14}.

Now, given any sign assignment $e$, by assigning each edge $d_{uv}$ the sign $e_{uv}$, the resulting cube becomes skew-commutative. By folding the cube into a sequence by taking direct sums over vertex modules having states of equal weights, we obtain a sequence of modules
\[
    C^i(D) = \bigoplus_{|v|_1 = i} V(D_v)
\]
and a sequence of maps defined by the sum of the edge maps
\[
    d^i = \sum_{\substack{u < v, \\ |v - u|_1 = 1}} e_{uv} d_{uv} .
\]
From the skew-commutativity of the cube, it follows that $d \circ d = 0$, hence obtain a chain complex $(C(D) ,d)$. Up to chain isomorphism, $C(D)$ is independent of the choice of the sign assignment $e$. Indeed, if we take another sign assignment $e'$, the above explained $0$-cochain $f$ induces a chain isomorphism between the corresponding complexes. Later, we shall fix a sign assignment that is compatible with that of the instanton cube complex $\CKh^\sharp$. See \Cref{prop:equal_to_Kh}. 

Each enhanced state is endowed a bigrading so that the complex $C(D)$ is bigraded with differential $d$ of bidegree $(1, 0)$. First, the module $V$ is given a grading so that $\bv_\pm$ has degree $\pm 1$ respectively. Let $n_+, n_-$ denote the number of positive, negative crossings of $D$ respectively. For an enhanced state 
\[
    x = x_1 \otimes \cdots \otimes x_r
\]
belonging to $V(D_v)$, its \textit{homological grading} is defined as 
\begin{align}\label{kh_ho_gr}
    \gr_h(x) = |v|_1 - n_-
\end{align}
and its \textit{quantum grading} is defined as 
\begin{align}\label{kh_q_gr}
    \gr_q(x) 
    &= |v|_1 + \sum_i \deg(x_i) + n_+ - 2n_- \\
    &= \gr_h(x) + \sum_i \deg(x_i) + n_+ - n_-.    
\end{align}
With the explicit definitions of $m$ and $\Delta$, it can be easily verified that the differential $d$ has bidegree $(1, 0)$, as claimed. The chain complex $(C(D), d)$ endowed with this bigrading is called the \textit{Khovanov complex}, and is denoted $(\CKh(D), d)$. Its homology is called the \textit{Khovanov homology} of $D$, and is denoted $\Kh(D)$.  

\begin{thm}[{\cite[Theorem 1]{Khovanov:2000}}] 
\label{thm:kh-inv}
    The isomorphism class of $\Kh(D)$ (as a bigraded $R$-module) is invariant under the Reidemeister moves.
\end{thm}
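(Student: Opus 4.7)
The plan is to verify invariance under each of the three Reidemeister moves R1, R2, R3 separately. For each move relating diagrams $D$ and $D'$, I will construct explicit chain maps $\CKh(D) \to \CKh(D')$ and $\CKh(D') \to \CKh(D)$ and exhibit chain homotopies showing that their compositions are chain homotopic to the respective identities. The key principle is that the Khovanov complex is local: the tensor factors of the cube of resolutions corresponding to crossings outside the region affected by the move are untouched, so it suffices to analyze the subcube spanned by the crossings involved. Throughout, care must be taken to track the grading shifts coming from the writhe-dependent terms $n_{\pm}$ in \eqref{kh_ho_gr} and \eqref{kh_q_gr}, which change under R1.

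For R1, say adding a positive kink, the extra crossing doubles the local complex: one resolution introduces a small extra circle disjoint from the remaining tangle, while the other leaves a single strand, and the connecting edge map is comultiplication $\Delta$ followed by a merge onto the distinguished circle. A Gaussian elimination argument, equivalently a change of basis sending the generator $\bv_-$ of the extra circle to $\bv_- + (\text{correction})$, splits off a contractible direct summand and leaves a subcomplex naturally isomorphic to $\CKh(D')$, with shifts in $(\gr_h,\gr_q)$ exactly matching the contribution of the new crossing to $n_\pm$. The negative-kink case is analogous.

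For R2, the two crossings give a $2\times 2$ subcube of resolutions, exactly one corner of which contains a small extra circle; the four edge maps are built from the multiplication $m$ and comultiplication $\Delta$ on this circle. A change of basis on its generators exhibits the local subcube as a direct sum of an acyclic complex and a subcomplex naturally isomorphic to $\CKh(D')$. For R3, the cleanest route is to reduce to R2: inside the $3$-dimensional subcube spanned by the three crossings, I would apply the R2 chain-homotopy equivalence to suitable parallel $2$-dimensional slices on each side of the move, leaving isomorphic residual complexes. Equivalently, one can Gaussian-eliminate both sides into a common \emph{canonical form} depending only on the underlying tangle after the move.

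The main obstacle is bookkeeping: one must check that each local chain equivalence respects the bigrading via \eqref{kh_ho_gr} and \eqref{kh_q_gr}, is compatible with the fixed sign assignment, and glues consistently with the rest of the cube. The sign compatibility is routine since, as noted above, any two sign assignments differ by the coboundary of a $0$-cochain, inducing a canonical vertex-wise isomorphism of complexes. The main conceptual effort therefore concentrates on the R2 (hence R3) local analysis; once those local equivalences are in hand, the remaining verifications are routine.
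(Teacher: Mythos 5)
The paper itself offers no proof of this theorem: it is quoted directly as \cite[Theorem 1]{Khovanov:2000}, so there is no in-paper argument to compare yours against. Your proposal follows the standard Gaussian-elimination (Bar-Natan–style) route rather than Khovanov's original explicit chain homotopies, and the overall outline is sound. Two points, however, deserve care. In your R1 description, the phrase ``comultiplication $\Delta$ followed by a merge onto the distinguished circle'' does not describe an edge of the cube: each edge carries a single $m$ or $\Delta$, and for R1 the relevant edge is a single merge or split (which one depends on the handedness of the kink); the Gaussian elimination is performed against that one map, not a composite. More substantially, your R3 step is where the real content lies and your sketch glosses over it. Applying the R2 equivalence ``to suitable parallel $2$-dimensional slices'' does not by itself yield a chain homotopy equivalence of the full $3$-cube, since the cube differential has components that run \emph{between} slices. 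The argument that works is to write each side of R3 as the mapping cone of a map between two $2$-dimensional faces, simplify both faces by the R2 reduction, and then use that a chain homotopy equivalence of cones is induced by chain homotopy equivalences of source and target that intertwine the cone maps up to homotopy; one must then check, after simplification, that the residual cone maps on the two sides of R3 actually coincide. That last verification is not routine and should be spelled out; once it is, your argument is the standard one and is correct.
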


Thus for any link $L$ with diagram $D$, it is justified to refer to $\Kh(D)$ as the \textit{Khovanov homology} of $L$ and denote it by $\Kh(L)$. The following properties are well known, but we give proofs to clarify the correspondence between $\CKh$ and the later defined instanton cube complex $\CKh^\sharp$.

\begin{prop}[{\cite[(167), Proposition 32]{Khovanov:2000}}] 
\label{prop:ckh-isoms}
    There are canonical isomorphisms,
    \begin{enumerate}
        \item $\CKh(D \sqcup D') \cong \CKh(D) \otimes \CKh(D')$,
        \item $\CKh(D^*) \cong \CKh(D)^*$. 
    \end{enumerate}
    Here, $D^*$ denotes the mirror of $D$, and $\CKh(D)^*$ denotes the algebraic dual of $\CKh(D)$ with bigrading given by $(\CKh(D)^*)^{i, j} = (\CKh^{-i,-j}(D))^*$.
\end{prop}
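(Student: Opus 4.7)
The plan is to construct both isomorphisms explicitly at the level of generators, using natural bijections between states, and then to verify compatibility with the edge maps, sign assignments, and bigradings.

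For (1), the crossings of $D \sqcup D'$ decompose as the disjoint union of those of $D$ and $D'$; fixing the ordering that lists the crossings of $D$ first, states of $D \sqcup D'$ correspond to pairs $(v, v') \in \{0,1\}^n \times \{0,1\}^{n'}$, resolved diagrams factor as $(D \sqcup D')_{(v,v')} = D_v \sqcup D'_{v'}$, and circles count additively. This gives canonical identifications $V((D \sqcup D')_{(v,v')}) \cong V(D_v) \otimes V(D'_{v'})$ assembling to a bigraded $R$-module isomorphism, since $n_\pm$, the weights $|\cdot|_1$, and enhanced-state degrees are all additive under disjoint union. Each edge resolves a crossing in exactly one factor, so the corresponding edge map has the form $f_{uv} \otimes \id$ or $\id \otimes f_{u'v'}$; a direct computation with the standard sign assignment shows that the sign on a $D'$-edge acquires a prefactor $(-1)^{|v|_1}$, which, up to a canonical re-sign of the form $(-1)^{n_-(D)\,|v'|_1}$, is precisely the Koszul sign needed for the tensor-product differential $d_D \otimes \id + \id \otimes d_{D'}$.

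For (2), the observation is that mirroring a crossing interchanges its $0$- and $1$-resolutions, so the involution $v \mapsto \bar{v} := \mathbf{1} - v$ is a canonical bijection on states with $D^*_{\bar v} = D_v$ as planar unlinks, and it reverses the partial order. A merging cobordism $S_{uv}$ becomes a splitting cobordism $S_{\bar v\bar u}$ after mirroring, and under the degree-preserving identification $V \cong V^*$ sending $\bv_\pm \mapsto \bv_\mp^*$ one checks on the four basis tensors that $m^* = \Delta$ and $\Delta^* = m$. Tensoring over circles yields a canonical $R$-module isomorphism $V(D^*_{\bar v}) \cong V(D_v)^*$ that intertwines each mirrored edge map with the dual of its original. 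Using $n_\pm(D^*) = n_\mp(D)$ and $|\bar v|_1 = n - |v|_1$, a short arithmetic check confirms $\gr_h(\bar x) = -\gr_h(x)$ and $\gr_q(\bar x) = -\gr_q(x)$, matching the convention $(\CKh(D)^*)^{i,j} = (\CKh^{-i,-j}(D))^*$.

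To align the differentials, the plan is to take on $D^*$ the sign assignment $e^*_{\bar v\bar u} := e_{uv}$, which satisfies the cocycle condition because squares of the cube of $D^*$ biject with those of $D$ under $v \mapsto \bar v$. The standard sign assignment on $D^*$ then differs from $e^*$ by a vertex-wise transformation as recalled after \Cref{thm:kh-inv}, yielding a canonical chain isomorphism. The main obstacle I anticipate is bookkeeping signs uniformly: the Koszul sign for the tensor product in (1) and the coherence of sign assignments under the order-reversing involution in (2). The underlying algebraic identities ($m^* = \Delta$, etc.) and the grading arithmetic are routine once the bijections on states and circles are fixed.
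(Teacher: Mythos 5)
Your proposal is correct and follows essentially the same route as the paper's proof: for (1) the identity on enhanced states via the bijection $(v,v')\leftrightarrow (v,v')$ with the standard sign assignments, and for (2) the state involution $v\mapsto\bar v$ combined with the Frobenius self-duality $\varphi:\bv_\pm\mapsto\bv_\mp^*$ (interchanging $m\leftrightarrow\Delta^*$) and the grading arithmetic using $n_\pm(D^*)=n_\mp(D)$. You are somewhat more explicit than the paper about the sign bookkeeping — identifying the $(-1)^{|v|_1}$ prefactor from the standard sign assignment on $D'$-edges and the discrepancy with the Koszul convention, and verifying that $e^*_{\bar v\bar u}:=e_{uv}$ is again a sign assignment — but these are exactly the points the paper delegates to its post-proposition remark and to the discussion of vertex-wise sign transformations, so the two arguments are in substance the same.
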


\begin{rem}
    To be precise about signs, for (1), having fixed the signing convention for tensor products of complexes, the sign assignments for any two out of the three complexes determines one for the other. For (2), the sign assignment of $\CKh(D)$ determines that of $\CKh(D^*)$ under the given isomorphism. 
\end{rem}

\begin{proof} \
\begin{enumerate}
    \item If we take the standard sign assignments for the three complexes and the standard signing convention for the tensor product, one can see that the desired isomorphism becomes an identity, where each pair of enhanced states $x$ of $D$ and $y$ of $D'$ corresponds bijectively to an enhanced state $x \otimes y$ of $D \sqcup D'$.
    \item First, note that the Frobenius algebra $V$ is \textit{self-dual}, i.e.\ the \textit{dual Frobenius algebra} $V^* = \Hom_R(V, R)$ endowed with unit $\epsilon^*$, multiplication $\Delta^*$, counit $\iota^*$ and comultiplication $m^*$, is isomorphic to $V$ as Frobenius algebras under the correspondence $\bv_\pm^* \mapsto \bv_\mp$, where $\{\bv^*_\pm\}$ denotes the basis of $V^*$ dual to the basis $\{\bv_\pm\}$ of $V$. Let $\varphi$ denote this self-dual isomorphism $V \xrightarrow{\sim} V^*$. Now, observe that the cube of resolutions $\Cube(D^*)$ can be obtained from $\Cube(D)$ by replacing each state $v$ with $\bar{v}$ where $\bar{v}_i = 1 - v_i$, so that $D_v = D^*_{\bar{v}}$, and reversing each edge $S_{uv}\colon D_u \rightarrow D_v$ to $\bar{S}_{uv}\colon D^*_{\bar{u}} \leftarrow D^*_{\bar{v}}$. From these observations, one can see that the correspondence
    \[
        x = x_1 \otimes \cdots \otimes x_r \in V(D^*_v)
            \ \mapsto \ 
        \varphi(x) = \varphi(x_1) \otimes \cdots \otimes \varphi(x_r) \in V(D_{\bar{v}})^*
    \]
    gives an isomorphism $\CKh(D^*) \cong \CKh(D)^*$. Moreover, with $n_\pm(D^*) = n_\mp(D)$, one can confirm that 
    \begin{align*}
        \gr_h(\varphi(x)) 
        &= -(|\bar{v}|_1 - n_-) \\
        &= |v|_1 - n_+ \\
        &= \gr_h(x)
    \end{align*}
    and similarly
    \begin{align*}
        \gr_q(\varphi(x)) 
        &= -(|\bar{v}|_1 - \sum_i \deg(x_i) + n_+ - 2n_-) \\
        &= |v|_1 + \sum_i \deg(x_i) + n_- - 2n_+ \\
        &= \gr_q(x). 
        \qedhere
    \end{align*}
\end{enumerate}  
\end{proof}

\subsection{Instanton cube complex}
In this section, we summarize the algebraic setting of the instanton cube complexes, which are constructed in \cite{KM11u,KM14} due to Kronheimer and Mrowka.

 For an oriented link diagram $D$ with $n$ crossings of a given link $L$,  the chain homotopy equivalence class of 
 %a cube complex 
 a $\Z/4$-graded chain complex
\[CKh^\sharp(D)= \bigoplus_{v \in \{0,1\}^n}C_v\] 
(where each $C_v$ is a $\Z/4$-graded free $\Z$-module)
with differential
\[
d^\sharp = \bigoplus_{\text{\footnotesize{$\begin{matrix}
 v,u \in \{0,1\}^n \\ v > u \end{matrix}$}}} d_{vu}
\]
is associated. 
Note that the maps 
\[
    d_{vu}\colon C_v \to C_u
\]
runs in the opposite direction compared to the edge maps of the Khovanov complex $\CKh$. The precise explanations are given in \cref{Preliminaries for instanton theory},
%.
while we mention here that there exists a canonical isomorphism
\begin{align}
\label{eq:Phi}
\Phi_* \colon I^\sharp(L) \to H_*(CKh^\sharp(D))
\end{align}
as $\Z/4$-graded $\Z$-modules.
%
%We have the {\it instanton $h$-grading} defined as follows: 
Moreover, $CKh^\sharp(D)$ has two additional structures;
{\it the $h$-filtration} and {\it the $q$-filtration}.
We first discuss the $h$-filtration.
\begin{defn}
{\it The $h$-filtration} on $\CKh^\sharp(D)$ is defined by
\[
h|_{C_v} = -|v|_1 + n_-.  
\]
\end{defn}
For each $i \in \Z$, set 
${\displaystyle \mF_i := \bigoplus_{h|_{C_v}\geq i}C_v}$. Then {\it the $h$-grading} of an element $x \in \CKh^\sharp(D)$ is defined by 
\[
\gr_h(x):= \max\{k \in \Z \mid x \in \mF_k\} \quad \in 
\quad \Z \cup \{\infty\}.
\] 
The order of chain maps is defined as the usual way: 
\begin{defn}
For a map $f \colon \CKh^\sharp(D) \to \CKh^\sharp(D')$,
{\it the $h$-order} of $f$ is defined by
\[
\ord_h(f) := \max\{ k \in \Z \mid \gr_h(f(x)) \geq \gr_h(x) + k \text{ for $\forall x \in \CKh^\sharp(D)$}\} 
\quad \in \quad \Z \cup \{\infty \}.
\]
\end{defn}
We see that $\ord_h(f) = \infty$ iff $f=0$. Moreover, it is shown in \cite{KM14} that $\ord_h(d^\sharp) \geq 1$. In particular, 
for the spectral sequence 
$\{E^r(CKh^\sharp(D))\}$ with respect to
the filtration $\{\mF_i\}$,
we have
\[
(E^0(\CKh^\sharp(D)),E^0(d^\sharp)) = (\CKh^\sharp(D),0)
\]
and
\begin{equation}   
\label{eq:d^1}
(E^1(\CKh^\sharp(D)),E^1(d^\sharp)) = \left(\CKh^\sharp(D),
\quad d^{\sharp 1} := \bigoplus_{v > u, \  |v-u|_1 = 1 } d_{vu} \right).
\end{equation}

Next, we discuss the $q$-filtration.
For a diagram $D$ with an orientation, each component $C_v$ of $\CKh^\sharp(D)$ is equipped with an identification 
\[
\gamma_v \colon C_v \to V^{\otimes r(D_v)},
\]
where $V=\langle \bv_+, \bv_- \rangle$ is a free abelian group. For the precise definition of the identification, see \cref{Preliminaries for instanton theory}.   
Make $V$ a graded abelian group by putting $\bv_+$ and $\bv_-$ in degrees 1 and $-1$ respectively, and give $V^{\otimes r(D_v)}$ the tensor-product grading.
%We refer to the grading on $C_v$ induced from $V^{\otimes r}$ via $\gamma_v$ as the {\it instanton $Q$-grading}.
Let $Q$ denote the grading on $C_v$ induced from $V^{\otimes r}$ via $\gamma_v$.
\begin{defn}
{\it The %instanton 
q-filtration} on $\CKh^\sharp(D)$ is defined by
\[q|_{C_v} = Q- |v|_1  - n_+ + 2 n_-.  \]
\end{defn}
For each $i \in \Z$, set %${\displaystyle \mF^q_i := \bigoplus_{q|_{C_v}\geq i}C_v}$. 
\[
\mF^q_iC_v := \langle x \in C_v \mid q(x) \geq i \rangle 
\quad \text{and} \quad
\mF^q_i := \bigoplus_{v \in \{0,1\}^n} \mF^q_iC_v.
\]
Then {\it the q-grading} of an element $x \in \CKh^\sharp(D)$ is defined by 
\[
\gr_q(x):= \max\{k \in \Z \mid x \in \mF^q_k\} \quad \in 
\quad \Z \cup \{\infty\}.
\] 
%Similar to instanton homological gradings for maps, we also have $q$-orders for maps. 
Similar to the $h$-order, we also define the $q$-order:
\begin{defn}
For a map $f \colon \CKh^\sharp(D) \to \CKh^\sharp(D')$,
{\it the $q$-order} of $f$ is defined by
\[
\ord_q(f) := \max\{ k \in \Z \mid \gr_q(f(x)) \geq \gr_q(x) + k \text{ for $\forall x \in \CKh^\sharp(D)$}\} 
\quad \in \quad \Z \cup \{\infty \}.
\]
\end{defn}
We see that $\ord_q(f) = \infty$ iff $f=0$.
Moreover, it is shown in \cite[Lemma 10.1]{KM14} that 
\[
\left(qE^0(\CKh^\sharp(D)),qE^0(d^\sharp) \right) = \left(\CKh^\sharp(D), d^{\sharp 1}\right)
= \left(E^1(\CKh^\sharp(D)),E^1(d^\sharp) \right),
\]
where 
$\{qE^r(CKh^\sharp(D))\}$ denotes the spectral sequence with respect to the filtration $\{\mF^q_i\}$,
and the chain complex
$\left(\CKh^\sharp(D), d^{\sharp 1}\right)$ is that given in the equality (\ref{eq:d^1}).

We record a relation to the Khovanov complex here. 
\begin{prop}
\label{prop:equal_to_Kh}
Using the notations given in the proof of \Cref{prop:ckh-isoms}, we may regard the map $\gamma_v$ as 
\[
    \gamma_v \colon C_v \to V(D^*_{\bar{v}}).
\]
Then the direct sum
\[
\gamma = \bigoplus_{v} \gamma_v \colon
\left(\CKh^\sharp(D), d^{\sharp 1} \right)
\to (\CKh(D^*),d)
\]
is a chain isomorphism
as $\Z/2$-graded chain complexes,
where $d^{\sharp 1}$ is the map in the equality (\ref{eq:d^1}) and the $\Z/2$-gradings are the reductions of the $\Z/4$-grading on $CKh^\sharp$ and the $h$-grading on $CKh$ respectively. 
Moreover, for any $x \in \CKh^\sharp(D)$, we have
\[
\gr_h(\gamma(x))= \gr_h(x) \quad \text{and} \quad
\gr_q(\gamma(x)) = \gr_q(x).
\]
\end{prop}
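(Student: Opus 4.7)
The plan is to verify three claims in turn: that $\gamma$ is a module isomorphism, that it carries $d^{\sharp 1}$ to $d$, and that it preserves both the homological and quantum gradings. The first claim is essentially immediate from the identifications: since $D^*_{\bar v}$ and $D_v$ are the same collection of planar circles (the mirror swaps over- and under-strands at each crossing, which is compensated exactly by replacing $v_i$ with $1 - v_i$), we have $V(D^*_{\bar v}) = V^{\otimes r(D_v)}$, so each $\gamma_v$ is already a module isomorphism onto the Khovanov vertex module.

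For the grading claims I would compute directly. Using $n_\pm(D^*) = n_\mp(D)$ and $|\bar v|_1 = n - |v|_1$ with $n = n_+(D) + n_-(D)$, the Khovanov homological grading (\ref{kh_ho_gr}) on $V(D^*_{\bar v})$ evaluates to $|\bar v|_1 - n_-(D^*) = -|v|_1 + n_-(D)$, matching $h|_{C_v}$. Similarly, if $x \in C_v$ has $Q$-grading $Q(x)$, so that the tensor components of $\gamma(x)$ have degrees summing to $Q(x)$, then (\ref{kh_q_gr}) gives
\[
\gr_q(\gamma(x)) = |\bar v|_1 + Q(x) + n_+(D^*) - 2n_-(D^*) = Q(x) - |v|_1 - n_+(D) + 2n_-(D),
\]
coinciding with $q|_{C_v}$.

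The remaining task is to show $\gamma \circ d^{\sharp 1} = d \circ \gamma$. By (\ref{eq:d^1}), $d^{\sharp 1}$ is the sum of the edge maps $f_{vu}$ over adjacent pairs $v > u$, which under $v \leftrightarrow \bar v$ correspond to adjacent pairs $\bar u > \bar v$ in the cube of resolutions of $D^*$. I would appeal to Kronheimer--Mrowka's construction in \cite{KM11u,KM14} to identify, under $\gamma_v$ and $\gamma_u$, each unsigned $f_{vu}$ with the multiplication $m$ when the underlying elementary cobordism is a merge, or with the comultiplication $\Delta$ when it is a split, using exactly the Frobenius algebra structure on $V$ used to define $\CKh$. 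What remains is reconciling the signs: the signs carried by the edge maps $f_{vu}$ must themselves define a valid sign assignment on $\{0,1\}^n$, and then by the discussion preceding \Cref{prop:ckh-isoms} there is a unique $0$-cochain $f$ on the cube whose coboundary compares this sign assignment with any preferred one used to define $\CKh(D^*)$. Twisting each $\gamma_v$ by $(-1)^{f(v)}$ (equivalently, declaring the sign assignment on $\CKh(D^*)$ to be the one induced by $\gamma$) then makes $\gamma$ a strict chain isomorphism; this is precisely the sign convention alluded to in the paragraph following \Cref{thm:kh-inv}.

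The main obstacle, as I see it, is this last sign-theoretic step: while the unsigned identification of the $f_{vu}$ with merges and splits is implicit in the excision and neck-stretching arguments underlying $\CKh^\sharp$, verifying that the signs $\{\text{sign}(f_{vu})\}$ genuinely satisfy the $2$-face relations of a sign assignment requires a careful accounting of orientations on the parametrized singular instanton moduli spaces defining the $f_{vu}$. This is the task taken up in \cref{ori app}, and once it is in place the proposition follows at once.
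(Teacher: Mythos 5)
Your proposal is correct and supplies details that the paper leaves implicit: the paper states this proposition without an explicit proof, relying on Kronheimer--Mrowka's \cite{KM11u} for the identification of the $E_1$-level edge maps $f_{vu}$ (those with $|v-u|_1=1$) with the Khovanov merge and split maps $m$ and $\Delta$, and then fixes the sign assignment for $\CKh(D^*)$ in the sentence that immediately follows the proposition statement. Your grading computations are correct. Two small points worth tightening: the paper adopts the second option in your parenthetical — declaring the sign assignment on $\CKh(D^*)$ to be the one carried by the $f_{vu}$ — and not a twist of $\gamma_v$ by a $0$-cochain, which would alter $\gamma$ from the map the proposition actually asserts is a chain isomorphism; and the orientation analysis that verifies the signs of $f_{vu}$ satisfy the sign-assignment relations is carried out in \cite{KM11u}, whereas \Cref{ori app} of the present paper concerns orientations for the newly introduced maps $\Psi$ and $\phi^{KM}_T$, not the original edge maps. (You are right, incidentally, that $(d^{\sharp 1})^2=0$ alone does not force the sign relations, since some compositions of the unsigned merges and splits may vanish.)
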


Hereafter, we fix the sign assignment $e$ for $\CKh(D^*)$ so that edge maps correspond identically under $\gamma$. 
We summarize the following two properties of Kronheimer--Mrokwa's link cobordism maps defined in \cite{KM14}.

\begin{prop}[\text{\cite[Theorem 1.2]{KM14}}]
\label{prop:KM_map_R_moves}
For each Reidemeister move $Rn^{\varepsilon} \colon D \to D'$ ($n=1,2,3$, $\varepsilon = \pm 1$), there exists a chain homotopy equivalence map
\[
\phi^{KM}_{Rn^{\varepsilon}} : \CKh^\sharp (D)\to \CKh^\sharp (D')
\]
with 
\[
(\ord_h(\phi^{KM}_{Rn^{\varepsilon}}),\ord_q(\phi^{KM}_{Rn^{\varepsilon}}))\geq (0,0)
\]
and homotopy  inverse $\phi^{KM}_{Rn^{-\varepsilon}}$ such that
the chain homotopies $\phi^{KM}_{Rn^{-\varepsilon}} \circ \phi^{KM}_{Rn^{\varepsilon}} \simeq \id$ and $\phi^{KM}_{Rn^{\varepsilon}} \circ \phi^{KM}_{Rn^{-\varepsilon}} \simeq \id$ have order $(\ord_h, \ord_q) \geq (-1,0)$, and
the following diagram is commutative:
    \[
    \xymatrix{
        I^\sharp(L)
        \ar[r]^-{I^\sharp(Rn^{\varepsilon})}
        \ar[d]_-{\Phi_*}
        & I^\sharp(L')
        \ar[d]^-{\Phi_*}\\
        H_*(\CKh^\sharp(D))
        \ar[r]^-{(\phi^{KM}_{Rn^{\varepsilon}})_*}
        & H_*(\CKh^\sharp(D'))%.
    }
    \]
where $L$ (resp.\ $L'$) denotes the link represented by $D$ (resp.\ $D'$), $\Phi_*$ denotes the isomorphism \eqref{eq:Phi} and $I^\sharp(Rn^{\varepsilon})$ is the instanton cobordism map induced from the trace of the corresponding smooth isotopy. 
\end{prop}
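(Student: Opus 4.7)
The plan is to adapt the general cobordism map construction of \cite{KM14} to the special case where the cobordism realizing the Reidemeister move is a cylinder. Each Reidemeister move $Rn^\varepsilon \colon D \to D'$ is realized by an ambient isotopy of the underlying link, which lifts to a smoothly embedded cylinder $S \cong [0,1] \times L$ in $[0,1] \times S^3$ whose slices at $t=0$ and $t=1$ are $D$ and $D'$ respectively. Applying Kronheimer--Mrowka's cobordism map for this cylinder produces the desired map $\phi^{KM}_{Rn^\varepsilon} \colon \CKh^\sharp(D) \to \CKh^\sharp(D')$. The filtered degree bound $(\ord_h, \ord_q) \geq (0,0)$ is then immediate from the general estimate
\[
    \geq \left(\tfrac{1}{2}(S \cdot S),\ \chi(S) + \tfrac{3}{2}(S \cdot S)\right)
\]
stated in the introduction and established in \cite{KM14}, since a cylinder satisfies $\chi(S) = 0$ and $S \cdot S = 0$.

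For the homotopy inverse relation, I would observe that the composite of the cobordism for $Rn^\varepsilon$ with the one for $Rn^{-\varepsilon}$ is ambient isotopic (rel boundary) to the trivial cylinder over $L$. Invoking the isotopy invariance of the KM cobordism map (proved in \cite{KM14}), both compositions $\phi^{KM}_{Rn^{-\varepsilon}} \circ \phi^{KM}_{Rn^\varepsilon}$ and $\phi^{KM}_{Rn^\varepsilon} \circ \phi^{KM}_{Rn^{-\varepsilon}}$ are chain homotopic to the identity. The chain homotopy itself is constructed from the parametrized moduli spaces associated to the $1$-parameter family of cobordisms realizing the isotopy. Because the parametrization raises the expected dimension of the moduli space by one but leaves the $q$-degree contribution intact, one obtains the order bound $(\ord_h, \ord_q) \geq (-1, 0)$ on the homotopy.

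For the compatibility square involving $\Phi_*$, I would use that $\Phi_*$ is itself defined in \cite{KM14} as the map induced on homology by a specific chain-level quasi-isomorphism built from reference cobordisms, and that by construction both $I^\sharp(Rn^\varepsilon)$ and the induced map $(\phi^{KM}_{Rn^\varepsilon})_*$ are computed from the \emph{same} underlying singular instanton cobordism for the cylinder. Commutativity of the square then reduces to a functoriality statement for composition of cobordisms in singular instanton theory. The main obstacle is the sharp filtered order estimate for the chain homotopy in the inverse relation; securing the $(-1, 0)$ bound (in particular that the $q$-order does not drop) requires the detailed dimension-counting and gluing analysis for parametrized singular instanton moduli spaces developed in \cite{KM14}, and this is where I expect the technical heart of the proof to lie.
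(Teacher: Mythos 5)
The proposition you are proving is stated in the paper as a citation to \cite[Theorem~1.2]{KM14}; the paper itself gives no proof and instead imports the result wholesale. Your proposal is therefore an attempt to reconstruct Kronheimer--Mrowka's argument, and it contains a genuine structural gap.

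The central problem is that you treat $\phi^{KM}_{Rn^{\varepsilon}}$ as simply the cobordism map associated to the cylinder $[0,1]\times L$. But the cube complex $\CKh^\sharp(D)$ is not an invariant of the underlying link alone; it depends on the chosen set of crossings $\{c_1,\dots,c_N\}$, and its direct-sum decomposition into vertex groups $C_v$ for $v\in\{0,1\}^N$ is indexed by that crossing set. A Reidemeister move changes $N$ (by $\pm1$ for RI, $\pm2$ for RII, and not at all for RIII but with a nontrivial rearrangement), so the source and target cube complexes $\CKh^\sharp(D)$ and $\CKh^\sharp(D')$ live over cubes of different dimension. The trace-of-isotopy map $\phi^{KM}_T$ defined in the paper (Section~3.4) is only constructed under the assumption that $K$ and $K'$ carry the \emph{same} crossing set, with $T$ a product near each crossing, precisely because it must respect the cube structure. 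Your cylinder map is therefore not even a well-defined map between the two cube complexes. In KM14 (and as the paper itself summarizes in Section~3.5), the Reidemeister map is a \emph{composition} of a trace-of-isotopy map with the adding/dropping-crossing maps $\Psi^\sharp$ and $\Phi^\sharp$, and it is these add/drop maps that reconcile the cube dimensions. Your proposal omits them entirely, and without them the rest of the argument (isotopy-invariance giving the homotopy inverse, the order bound on the homotopy) has nothing to attach to.

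A secondary point: even granting the map, your order estimate for the chain homotopy is not actually derived. The general cobordism-map bound $(\tfrac{1}{2}S\cdot S,\ \chi(S)+\tfrac{3}{2}S\cdot S)$ governs the chain maps, not the homotopies; the $(-1,0)$ bound on the homotopy comes from a separate analysis of the one-parameter families of metrics used to interpolate between the add/drop-crossing compositions and the identity, together with the quantum-degree accounting for those specific parametrized moduli spaces. You flag this as ``the technical heart'' and stop, which is honest, but the gap remains, and the heuristic offered (``raises the expected dimension by one but leaves the $q$-degree intact'') is asserted rather than established. To repair the proof you would need to bring in the add/drop maps, verify their filtered orders, and then carry out the moduli-space dimension count for the explicit homotopies as KM14 does.
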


\begin{prop}[\text{\cite[Proposition 1.5]{KM14}}]
\label{prop:KM_map_handles}
For each planar handle attachment $S=h^n \colon D \to D'$ ($n=0,1,2$), there exists a chain map
\[
\phi^{KM}_{h^n} : \CKh^\sharp (D)\to \CKh^\sharp (D')
\]
with 
\[
(\ord_h(\phi^{KM}_{h^n}),\ord_q(\phi^{KM}_{h^n}))\geq \left(\frac{1}{2}(S \cdot S), \chi(S)+\frac{3}{2}(S \cdot S)\right)
\]
such that the following diagram is commutative:
    \[
    \xymatrix{
        I^\sharp(D)
        \ar[r]^-{I^\sharp(h^n)}
        \ar[d]_-{\Phi_*}
        & I^\sharp(D')
        \ar[d]^-{\Phi_*}\\
        H_*(\CKh^\sharp(D))
        \ar[r]^-{(\phi^{KM}_{h^n})_*}
        & H_*(\CKh^\sharp(D'))%.
    }
    \]
where $\Phi_*$ denotes the isomorphism \eqref{eq:Phi} and $I^\sharp(h^n)$ is the instanton cobordism map for the surface cobodism obtained by attaching an $n$-handle.
\end{prop}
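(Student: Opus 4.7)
The plan is to handle the three planar handle types $n = 0, 1, 2$ separately by localizing the construction in a disk disjoint from all crossings of $D$, and to construct $\phi^{KM}_{h^n}$ either algebraically on the tensor factor introduced/removed by the handle, or as an edge map of an extended cube of resolutions produced by Kronheimer--Mrowka's singular instanton moduli spaces on the cobordism.

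For a birth ($n = 0$) one has $D' = D \sqcup U$ with $U$ a small unknot, and a disjoint-union formula for the instanton cube complex (developed later in the paper) yields a canonical identification $\CKh^\sharp(D \sqcup U) \cong \CKh^\sharp(D) \otimes V$. The map $\phi^{KM}_{h^0}$ is then defined by $x \mapsto x \otimes \bv_+$. Since tacking on $\bv_+$ changes the $Q$-grading by $+1$ while preserving $|v|_1$ and $n_\pm$, the orders are $(\ord_h, \ord_q) \geq (0, 1)$, matching $(\tfrac{1}{2}(S\cdot S), \chi(S) + \tfrac{3}{2}(S\cdot S)) = (0, 1)$ since $S \cdot S = 0$ and $\chi(S) = 1$. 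The death case ($n = 2$) is dual: $\phi^{KM}_{h^2} = \id \otimes \epsilon$ has the same orders $(0, 1)$. For a saddle ($n = 1$) with $\chi(S) = -1$, the band surgery site sits in a disk disjoint from all crossings, so $D$ and $D'$ appear as two adjacent vertices of a common extended cube of resolutions of dimension $n + 1$; the map $\phi^{KM}_{h^1}$ is defined as the associated Kronheimer--Mrowka edge map for this extra cube direction. Under the identification $\gamma$ of \Cref{prop:equal_to_Kh}, this edge map acts on the affected tensor factors as the Frobenius merge $m$ or comultiplication $\Delta$, both of which have orders $(\ord_h, \ord_q) \geq (0, -1)$, matching the required bound.

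Commutativity of the diagram with $I^\sharp(h^n)$ follows by identifying each cube-level map with the cobordism map of singular instanton Floer homology: for birth and death it is the unit or counit of the TQFT structure on the added/removed unknot component, and for the saddle it is the standard two-handle map realized by the corresponding parametrized singular instanton moduli space in \cite{KM11u}. The main obstacle is the geometric input required to relate $\CKh^\sharp$ for the different diagrams, namely the disjoint-union isomorphism at the cube level (birth and death) and the compatibility of the saddle edge map with the extended cube, together with the correct sign assignment. These are exactly the excision and disjoint-union lemmas promised later in the present paper; once they are available, the order estimates reduce to the explicit degree computations for the Frobenius algebra maps on $V$ indicated above.
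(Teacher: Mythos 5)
This proposition is cited from \cite[Proposition 1.5]{KM14} and is treated in the paper as a black-box input: its content (existence of the chain map, the double-filtration order bound, and compatibility with $I^\sharp$) is imported wholesale, and Section~3.5 only \emph{describes} the Kronheimer--Mrowka constructions without re-proving the order estimates. Your proposal instead tries to derive the proposition from internal machinery, and this creates several real gaps.

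First, the central claim for $n=0$ and $n=2$ — a canonical chain-level identification $\CKh^\sharp(D \sqcup U) \cong \CKh^\sharp(D) \otimes V$ over $\Z$ — is not available. The paper's \Cref{excision lemma} only produces a $\Z$-module map $\Psi \colon \CKh^\sharp(D_1)\otimes\CKh^\sharp(D_2) \to \CKh^\sharp(D_1\sqcup D_2)$ of order $\geq(0,0)$, and it is a chain map only after passing to $E^1$ (resp.\ $qE^0$), or working over $\F_2$. The full instanton cube differential contains higher components ($f_{vu}$ with $|v-u|_1 \geq 2$) that do not split under disjoint union, so defining $\phi^{KM}_{h^0}$ as $x \mapsto x\otimes\bv_+$ on the tensor complex does not even give a chain map into $\CKh^\sharp(D \sqcup U)$ without further argument. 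Moreover, this is not how Kronheimer--Mrowka define these maps: the $h^0$ and $h^2$ maps are defined geometrically via the isotopy-trace cobordism map \eqref{familycobmap}, i.e.\ by counting parametrized singular instanton moduli spaces over families of metrics $G^T_{u,v}$, and the order bound comes from the ASD index formula (compare \Cref{quantum ex}), not from $Q$-degree bookkeeping on the Frobenius algebra $V$. Showing that your algebraic candidate agrees with their geometric $\phi^{KM}_{h^n}$ up to doubly-filtered chain homotopy is exactly the kind of comparison this paper is trying to establish — it cannot be assumed.

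Second, appealing to the excision/disjoint-union lemmas "promised later in the present paper" is logically misplaced: those lemmas concern the compatibility of $\phi^{KM}_S$ with the Khovanov cobordism map on the $E_1$/$E_2$ page, and they take \Cref{prop:KM_map_handles} (and \Cref{prop:KM_map_R_moves}) as given data from \cite{KM14}. They do not re-establish the existence of the maps, the order bound $\geq(\tfrac12 S\cdot S, \chi(S)+\tfrac32 S\cdot S)$, or the commutativity with $I^\sharp$. Your description of the $h^1$ case (adding a crossing and taking an edge map in the extended cube) does match the paper's Section~3.5, and your order arithmetic is correct for all three handle types; but for $h^0$ and $h^2$ you would need to replace the algebraic definition with the genuine moduli-theoretic one and derive the bound from the index formula, as in \cite{KM14}.
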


\subsection{Definition of $\phi^\sharp_S$}
Now, we shall give 
% a definition
the definition 
of $\phi^\sharp_S$ appeared in \cref{thm:main}.
\begin{defn}
Suppose that a cobordism $S\colon K \to K'$ is represented by a movie (i.e. a composition of elementary cobordisms)
\[
D=D_0 \overset{S_1}{\longrightarrow} D_1 
\overset{S_2}{\longrightarrow} \cdots 
\overset{S_m}{\longrightarrow} D_m = D'.
\]
Then we associate to $S$ a chain map $\phi^\sharp_S$ in the following way:
\begin{itemize}
    \item[(1)] If $S_i$ is 
    %either of $0,1,2$-handle attachments,
    a planar handle attachment, 
    then we just define 
    \[\phi^\sharp_{S_i}:=\phi^{KM}_{S_i}.
    \]
    \item[(2)] If $S_i$ is either of Reidemeister moves except for $R3$, then we first decompose $S_i$ into a movie $S_i=S'_{m'}\circ \cdots \circ S'_{1}$ as shown in \Cref{fig:movies}, and define 
    \[
\phi^\sharp_{S_i}:=\phi^{KM}_{S'_{m'}}\circ \cdots \circ \phi^{KM}_{S'_{1}}.
\]
    \item[(3)] If $S_i$ is $R3$, then we decompose $S_i$ into a movie shown in  \Cref{fig:RIIImovies}, and define 
    \[
    \phi^\sharp_{S_i} := (\phi^\sharp_{R2^{-1}})^3 \circ (\phi^{KM}_{h^1})^3 \circ \phi^{KM}_{R3} \circ (\phi^{KM}_{R2})^3 \circ (\phi^{KM}_{h^0})^3.
    \]
    
    \item[(4)] Finally, we set $\phi^\sharp_S := \phi^\sharp_{S_m} \circ \cdots \circ \phi^\sharp_{S_1}$.
\end{itemize}
(Here we note that $R3^{-1}=R3$.)
\end{defn}

\begin{figure}[t]
    \centering
    \includegraphics[width=0.5\linewidth]{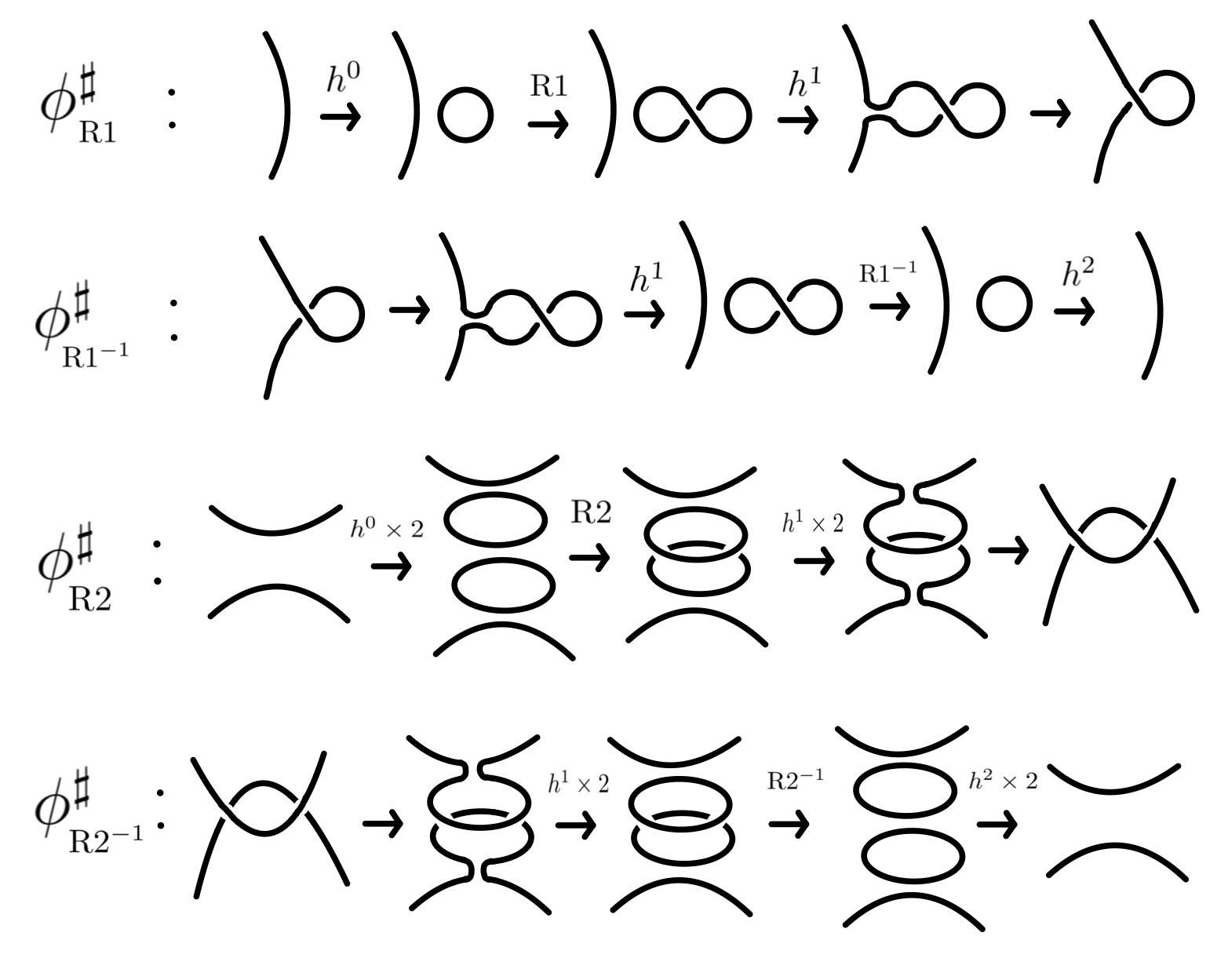}
    \caption{The definitions of $\phi^\sharp_{R1}$, $\phi^\sharp_{R1^{-1}}$, $\phi^\sharp_{R2}$ and $\phi^\sharp_{R2^{-1}}$}
    \label{fig:movies}
\end{figure}

\begin{figure}[t]
    \centering
    \includegraphics[width=0.6\linewidth]{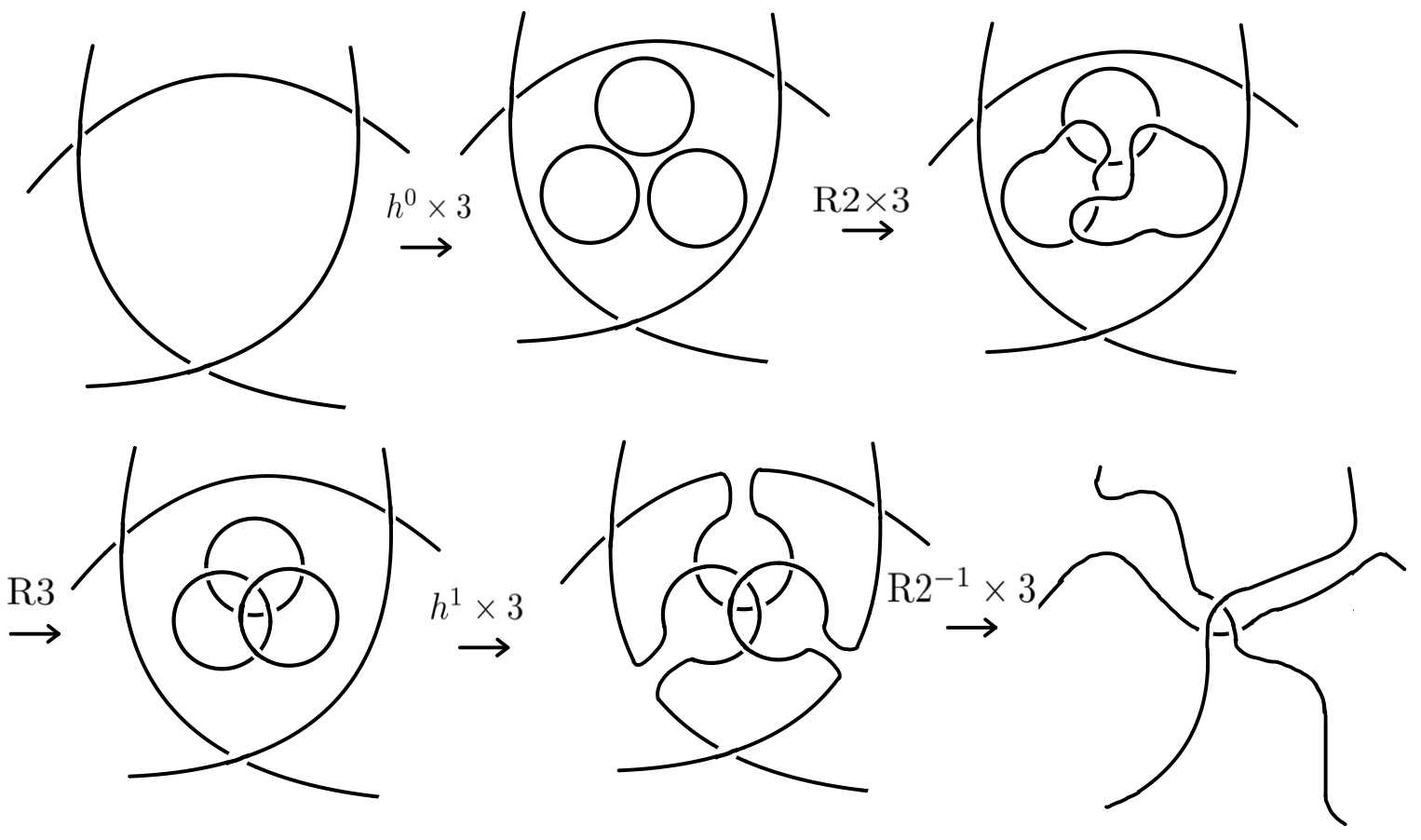}
    \caption{The definition of $\phi^\sharp_{R3}$}
    \label{fig:RIIImovies}
\end{figure}

It follows from \Cref{prop:KM_map_R_moves}, \Cref{prop:KM_map_handles} and the functoriality of $I^\sharp$-functor that $\phi^\sharp_S$
is a map of order 
\[
(\ord_h(\phi^{\sharp}_S),\ord_q(\phi^{\sharp}_S))\geq \displaystyle \left(\tfrac{1}{2}(S \cdot S), \chi(S)+\tfrac{3}{2}(S \cdot S) \right)
\]
and
can be regarded as a representative of $I^\sharp(S)$. 
Therefore, \Cref{thm:main} is reduced to the following proposition.

\begin{prop}
\label{prop:main1_reduced}
For any elementary cobordism $S \colon D \to D'$, we have the commutative diagrams
    \begin{equation} \label{eq:h-filt}
    \begin{split}
    \xymatrix@C=44pt{
        H_*(E^1(\CKh^\sharp(D)))
        \ar[r]^-{(E^1(\phi^\sharp_S))_*}
        \ar[d]_-{\gamma_*}
        & H_*(E^1(\CKh^\sharp(D')))
        \ar[d]^-{\gamma_*}\\
        \Kh(D^*)
        \ar[r]^-{\Kh(S^*)}
        & \Kh(D'^*)
    }
    \end{split}
    \end{equation}
    and
    \begin{equation} \label{eq:q-filt}
    \begin{split}
    \xymatrix@C=44pt{
        H_*(qE^0(\CKh^\sharp(D)))
        \ar[r]^-{(qE^0(\phi^\sharp_S))_*}
        \ar[d]_-{\gamma_*}
        & H_*(qE^0(\CKh^\sharp(D')))
        \ar[d]^-{\gamma_*}\\
        \Kh(D^*)
        \ar[r]^-{\Kh(S^*)}
        & \Kh(D'^*)
    }
    \end{split}
    \end{equation}
up to sign, where $\gamma_*$ denotes the isomorphism induced from
\Cref{prop:equal_to_Kh}. 
\end{prop}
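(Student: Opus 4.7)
The plan is to prove both commutative diagrams by case analysis on the elementary cobordism $S$, with the three handle attachments forming the technical heart and the Reidemeister cases following from functoriality of the Khovanov TQFT. In both diagrams the vertical maps are the chain isomorphism $\gamma$ of \Cref{prop:equal_to_Kh}, so it suffices to verify that the map $\gamma \circ E^1(\phi^\sharp_S) \circ \gamma^{-1}$ (resp.\ its $qE^0$ analogue) agrees with $\Kh(S^*)$ on homology.

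For a handle attachment $S = h^n$ with $n \in \{0,1,2\}$, the first step is to localize $\phi^{KM}_{h^n}$ to the ball $B$ in which the surgery occurs. Writing $D = D_0 \sqcup D_B$ and $D' = D_0 \sqcup D'_B$, the disjoint-union and excision lemmas \Cref{disjoint lemma,excision lemma} should let me express $\phi^{KM}_{h^n}$ as an external tensor product $\id_{\CKh^\sharp(D_0)} \otimes \phi^{KM,\mathrm{loc}}_{h^n}$, together with the compatible behavior of the $h$- and $q$-filtrations under disjoint union. Passing to $E^1$ (resp.\ $qE^0$) and transporting through $\gamma$, the verification reduces to checking the leading behavior of the local map, which by \Cref{compatibility with Khovanov} is precisely the Frobenius-algebra operation of the Khovanov TQFT: the unit $\iota$ for $h^0$, the multiplication $m$ or comultiplication $\Delta$ for $h^1$, and the counit $\epsilon$ for $h^2$. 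This matches the explicit description of $\Kh(h^{n*})$ read off from \Cref{subsec:Khovanov}.

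For $R1^{\pm}$ and $R2^{\pm}$, the map $\phi^\sharp_{Rn^{\pm}}$ is defined in \Cref{fig:movies} as a composition of cobordism maps for elementary handles and planar isotopies. By the handle case, each factor induces on $E^1$ and $qE^0$ the corresponding local Khovanov TQFT map. The composite therefore realizes the Khovanov cobordism map associated to the chosen movie, and by movie-move invariance of Khovanov homology on $(1{+}1)$-dimensional cobordisms, this composite agrees with the canonical Reidemeister chain map $\Kh(Rn^{\pm*})$ up to chain homotopy, hence on homology.

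For $R3$, the definition sandwiches a central factor $\phi^{KM}_{R3}$ between layers of $R2^{\pm}$ and handles. The plan is to exploit the movie of \Cref{fig:RIIImovies} in which the central $R3$ acts on an intermediate configuration simple enough that the map induced by $\phi^{KM}_{R3}$ on $E^1$ can be computed directly from Kronheimer--Mrowka's construction and matched with the Khovanov $R3$ chain map; the surrounding layers then reduce to the already-established handle and $R2$ cases, and functoriality packages the full composite into $\Kh(R3^*)$. The main obstacle I anticipate is the handle step itself: it hinges on carefully controlling how the $h$- and $q$-filtrations respect disjoint unions and on pinning down the leading term of $\phi^{KM}_{h^n}$ as a Frobenius operation on $V$. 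Once the three technical lemmas \Cref{excision lemma}, \Cref{disjoint lemma}, and \Cref{compatibility with Khovanov} are in place, the Reidemeister cases follow essentially formally from TQFT functoriality.
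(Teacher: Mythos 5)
Your handle case is fine (it amounts to citing \Cref{compatibility with Khovanov}, which is exactly what the paper does), but your treatment of the Reidemeister moves has a genuine gap.

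First, a factual issue: the movies of \Cref{fig:movies} and \Cref{fig:RIIImovies} do \emph{not} consist only of handles and planar isotopies. After the disjoint-union reduction, the reduced maps the paper analyzes are $\phi^{KM}_{R1}\circ\phi^{KM}_{h^0}$, $\phi^{KM}_{R2}\circ(\phi^{KM}_{h^0})^2$, $\phi^{KM}_{R3}\circ(\phi^{KM}_{R2})^3\circ(\phi^{KM}_{h^0})^3$, etc.\ --- each still contains a Kronheimer--Mrowka Reidemeister factor $\phi^{KM}_{Rn}$. So your claim that ``by the handle case, each factor induces the corresponding local Khovanov TQFT map'' does not apply to those factors, and the whole $R1,R2$ argument collapses to the same unresolved question as $R3$: how does one know that $E^1(\phi^{KM}_{Rn})$ (resp.\ $qE^0(\phi^{KM}_{Rn})$) corresponds, on homology, to the Khovanov Reidemeister map $\Kh(Rn^*)$? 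Your appeal to ``movie-move invariance of Khovanov homology'' does not help here: that is an internal consistency statement within the Khovanov TQFT (two movies for the same isotopy induce the same $\Kh$-map), whereas what is needed is a comparison \emph{across} the two theories (instanton side vs.\ Khovanov side). Movie-move invariance says nothing about $\phi^{KM}_{Rn}$, which is defined gauge-theoretically, not via the Khovanov Frobenius algebra. And your hope to ``compute $E^1(\phi^{KM}_{R3})$ directly from Kronheimer--Mrowka's construction'' is exactly the hard step the paper is designed to avoid.

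The paper's actual route is different and genuinely cleverer: the whole point of padding the Reidemeister movies with $h^0$'s at the start (or $h^2$'s at the end) is to force the composite on homology to factor through $H_*(E^1(\CKh^\sharp(\emptyset)))\cong\Z$ (resp.\ the corresponding single $q$-graded summand), which is an infinite cyclic group. One then only needs to know that $(E^1(\phi^{KM}_{Rn}))_*$ and $\gamma_*^{-1}\circ\Kh(Rn^*)\circ\gamma_*$ are both $q$-filtered (resp.\ $q$-graded) isomorphisms --- which follows from \Cref{prop:equal_to_Kh}, \Cref{prop:KM_map_R_moves}, and \Cref{thm:kh-inv} --- because any two filtered isomorphisms precomposed (or postcomposed) with the cup/cap maps pass through an infinite cyclic group and therefore agree up to sign (Claims in the paper's proof). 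This rank-one factorization argument is the missing idea in your proposal; without it, you have no mechanism to match the instanton-side Reidemeister map with the Khovanov-side one, and ``TQFT functoriality'' alone cannot supply it.
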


\subsection{Lemmas from instanton theories}

In order to prove \cref{prop:main1_reduced}, we summarize three key lemmas from instanton theories. 
The proofs of these lemmas will be given in Sections 3 and 4. 
\begin{lem}\label{excision lemma}
Suppose $D_1$ and $D_2$ are diagrams. 
For a disjoint union $D_1 \sqcup D_2$ of two diagrams, there exists a $\Z$-module map
\[
\Psi \colon \CKh^\sharp(D_1)\otimes \CKh^\sharp(D_2) \to \CKh^\sharp(D_1 \sqcup D_2)
\]
with order $(\ord_h, \ord_q) \geq (0,0)$
such that both $E^1(\Psi)$ and $qE^0(\Psi)$ are chain maps over $\Z$.
Moreover,
we have the commutative diagrams
\[
    \begin{split}
    \xymatrix@C=44pt{
        E^1(\CKh^\sharp(D_1))\otimes E^1(\CKh^\sharp(D_2))
        \ar[r]^-{E^1(\Psi)}
        \ar[d]_-{\gamma \otimes \gamma}
        & E^1(\CKh^\sharp(D_1 \sqcup D_2))
        \ar[d]^-{\gamma}\\
        \CKh(D_1^*) \otimes \CKh(D_2^*)
        \ar[r]^-{\cong}
        & \CKh(D_1^* \sqcup D_2^*)
    }
    \end{split}
\]
    and
\[
    \begin{split}
    \xymatrix@C=44pt{
        qE^0(\CKh^\sharp(D_1))\otimes qE^0(\CKh^\sharp(D_2))
        \ar[r]^-{qE^0(\Psi)}
        \ar[d]_-{\gamma \otimes \gamma}
        & qE^0(\CKh^\sharp(D_1 \sqcup D_2))
        \ar[d]^-{\gamma}\\
        \CKh(D_1^*) \otimes \CKh(D_2^*)
        \ar[r]^-{\cong}
        & \CKh(D_1^* \sqcup D_2^*). 
    }
    \end{split}
\]
%Over $\mathbb{F}_2$, $\Psi$ is a chain map and a quasi-isomorphism with respect to the tensor complex $\CKh^\sharp(D_1)\otimes \CKh^\sharp(D_2)$ over $\mathbb{F}_2$. 
\end{lem}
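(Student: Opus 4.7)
The plan is to construct $\Psi$ vertex-wise from excision cobordism maps and then to verify its stated properties by combining the topological features of these cobordisms with the identification $\gamma$ between $E^1(\CKh^\sharp)$ and the Khovanov complex of the mirror.

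First, I would use the excision cobordism map developed in \cref{Excision cobordism map} to define, for each pair of states $v = (v_1,v_2) \in \{0,1\}^{n_1} \times \{0,1\}^{n_2}$, a $\Z$-module homomorphism
\[
\Psi_v \colon C_{v_1} \otimes C_{v_2} \to C_v
\]
coming from a specific excision cobordism that relates the singular instanton chain complex of the disjoint union $D_{1,v_1} \sqcup D_{2,v_2}$ to a tensor-product model. Setting $\Psi = \bigoplus_v \Psi_v$ yields the required $\Z$-module map. Because the excision cobordism is built from topologically trivial pieces whose Euler characteristic and self-intersection contributions cancel, I expect both the $h$- and $q$-orders of $\Psi$ to be $\geq 0$.

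Next, I would analyze the commutator of $\Psi$ with the cube differentials, which decomposes into square contributions, one for each $2$-face of the cube, comparing the two ways of interleaving an excision cobordism with an edge surgery cobordism. By the general principle that compositions of surgery cobordisms and excision cobordisms agree up to cobordisms whose contribution has strictly positive filtration shift, each such square contribution has $h$-order $\geq 1$ and therefore vanishes on the $E^1$ page; a parallel analysis in the $q$-filtration handles $qE^0(\Psi)$. Consequently $E^1(\Psi)$ and $qE^0(\Psi)$ are honest chain maps over $\Z$.

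To verify the commutative squares, I would observe that under $\gamma_v \colon C_v \xrightarrow{\sim} V(D^*_{\bar v})$ of \cref{prop:equal_to_Kh}, the vertex-wise map $\Psi_v$ is identified with the canonical tensor identification
\[
V(D^*_{1,\bar v_1}) \otimes V(D^*_{2,\bar v_2}) \xrightarrow{\sim} V(D^*_{1,\bar v_1} \sqcup D^*_{2,\bar v_2}),
\]
since both the Frobenius-algebra structure on vertex modules and the excision isomorphism for the singular instanton homology of an unlink are controlled by the same two-dimensional algebra $V$. Summing over all vertices recovers the canonical chain isomorphism of \cref{prop:ckh-isoms}, yielding both commutative squares.

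Finally, over $\F_2$ the sign-ambiguities in the cube assignment disappear, so the square-level discrepancies collapse to zero and $\Psi$ becomes a genuine chain map. That $\Psi$ is a quasi-isomorphism over $\F_2$ then follows from the fact that $E^1(\Psi)$ is identified with the canonical Khovanov tensor-product isomorphism, combined with a standard comparison of the (finitely filtered, hence convergent) spectral sequences on both sides. The main obstacle I anticipate is the second step: verifying that the cube-level discrepancies genuinely have strictly positive filtration shift, which requires careful control over the Euler characteristics and self-intersections of the intermediate surfaces, and a delicate check that over $\F_2$ these sign-sensitive contributions vanish on the nose rather than merely on $E^1$.
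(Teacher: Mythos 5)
Your proposal identifies the right ingredients (excision cobordisms, the identification $\gamma$, an $\F_2$ chain-level argument, a spectral-sequence comparison for quasi-isomorphism), but it has one genuine structural gap that would cause the final $\F_2$ statement to fail, and the middle steps are considerably more optimistic than the actual argument requires.

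The gap is in your definition of $\Psi$. You set $\Psi = \bigoplus_v \Psi_v$ with $\Psi_v \colon C_{v_1} \otimes C_{v_2} \to C_v$ a single vertex-to-vertex map for each state $v = (v_1,v_2)$. This diagonal-only assignment is what the paper would call $E^1(\Psi)$, not $\Psi$. The paper's $\Psi$ is the full sum $\sum_{\mathbf{1} \geq uv \geq w \geq \mathbf{0}} \Psi_{uv;w}$, where each $\Psi_{uv;w}$ counts instantons parametrized by a $|uv-w|_1$-dimensional family of orbifold metrics $G_{uv;w}$ over the excision cobordism. If you drop the off-diagonal components (where $w < uv$), you may still have a filtered $\Z$-module map whose $E^1$ and $qE^0$ leading parts are chain maps, but the claim that $\Psi$ is a \emph{chain map on the nose over $\F_2$} becomes false: the identity $\Psi d^{\otimes} = d^\sharp \Psi$ over $\F_2$ is exactly the statement that the boundary of the compactified $1$-dimensional parametrized moduli space over $G_{uv;w}$ has zero count, and this identity involves the higher-order components $\Psi_{uv;w}$ in an essential way (\Cref{general rel}). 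Your explanation that over $\F_2$ ``the sign-ambiguities disappear, so the square-level discrepancies collapse to zero'' misattributes the difficulty: the discrepancies of the diagonal-only map are not sign artifacts but genuine geometric contributions that are only cancelled by the longer-distance components.

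Two further points where the proposal skips real work. First, the statement that ``compositions of surgery cobordisms and excision cobordisms agree up to cobordisms whose contribution has strictly positive filtration shift'' is not a general principle one can cite; in the paper it is an output of the compactification analysis (\Cref{general rel}) together with an index/filtration computation (\Cref{quantum ex}) that uses $\chi(W)=\sigma(W)=\chi(S)=S\cdot S=0$ for the excision cobordism (\Cref{topological computation of excision}). Second, your verification of the commutative squares appeals to both sides being ``controlled by the same two-dimensional algebra $V$,'' but the actual argument (\Cref{exci e1}) requires hitting every generator of the instanton chain groups by capping with dotted disks $D(N)$, comparing $I^\sharp(W,S)\circ I^\sharp(D(N))(1)$ with $I^\sharp(D^*(N))(1)$, and invoking isotopy invariance of $I^\sharp$; the identification of the excision map with the Khovanov tensor identification is not automatic from the Frobenius structure alone.
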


\begin{rem}
We call the map $\Psi$ the {\it excision cobordism map}.
Proposition \ref{general rel} implies that the excision map $\Psi$ defines a chain map and a quasi-isomorphism from the tensor product $\CKh^\sharp(D_1)\otimes \CKh^\sharp(D_2)$ to $CKh^{\sharp}(D_1 \sqcup D_2)$ over the coefficient $\mathbb{F}_2$.
We have not defined a differential $\CKh^\sharp(D_1)\otimes \CKh^\sharp(D_2)$ over $\Z$ so that $\Psi$ can be seen as a chain map over $\Z$. Actually, our excision map $\Psi$ is no longer a chain map over the coefficient $\Z$ with respect to a natural tensor product $\CKh^\sharp(D_1)\otimes \CKh^\sharp(D_2)$.
Since we do not need to treat the higher degree parts of $\Psi$ for the proof of the main result, we only established that the induced maps on $E^1$ and $qE^0$ are chain maps over $\Z$. 
\end{rem}

\begin{lem}\label{disjoint lemma}
For the disjoint union 
\[(S \sqcup [0,1]\times D_2) \colon D_1 \sqcup D_2 \to D'_1 \sqcup D_2
\]
of an elementary cobordism $S \colon D_1 \to D'_1$ and the product cobordism $[0,1] \times D_2$,
the diagrams

\[
    \xymatrix@C=40pt{
        E^1(\CKh^\sharp(D_1)) \otimes E^1(\CKh^\sharp(D_2))
        \ar[r]^-{E^1(\phi^{KM}_S) \otimes \id}
        \ar[d]_-{E^1(\Psi)}
        &E^1(\CKh^\sharp(D'_1)) \otimes E^1(\CKh^\sharp(D_2))
        \ar[d]^-{E^1(\Psi)}\\
        E^1(\CKh^\sharp(D_1 \sqcup D_2))
        \ar[r]^-{E^1(\phi^{KM}_{S \sqcup ([0,1]\times D_2)})}
        & E^1(\CKh^\sharp(D'_1 \sqcup D_2))
    }
\]
and
\[
    \xymatrix@C=40pt{
        qE^0(\CKh^\sharp(D_1)) \otimes
        qE^0(\CKh^\sharp(D_2))
        \ar[r]^-{qE^0(\phi^{KM}_S) \otimes \id}
        \ar[d]_-{qE^0(\Psi)}
        &qE^0(\CKh^\sharp(D'_1)) \otimes qE^0(\CKh^\sharp(D_2))
        \ar[d]^-{qE^0(\Psi)}\\
        qE^0(\CKh^\sharp(D_1 \sqcup D_2))
        \ar[r]^-{qE^0(\phi^{KM}_{S \sqcup ([0,1]\times D_2)})}
        & qE^0(\CKh^\sharp(D'_1 \sqcup D_2))
    }
\]
are commutative up to chain homotopy over $\Z$ if $D_1$, $D_1'$ and $D_2$ are diagrams. 
\end{lem}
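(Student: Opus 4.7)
The plan is to relate both sides of the diagrams to parametrized singular instanton moduli spaces on a 4-manifold realizing the cobordism $S \sqcup ([0,1]\times D_2)$, and then reduce the identification to the excision already used to build $\Psi$ in \Cref{excision lemma}. Recall that $\phi^{KM}_{S \sqcup ([0,1]\times D_2)}$ is defined vertex-by-vertex in the cube of resolutions as a signed count of points in parametrized moduli spaces carrying the Kronheimer--Mrowka earring data. Since the pseudo-diagram realization of $S \sqcup ([0,1]\times D_2)$ may be arranged as a literal disjoint union of 4-manifolds with singular surface (one containing $S$, the other being the product cobordism on the pseudo-diagram of $D_2$), these parametrized moduli spaces split as products of the moduli spaces governing $\phi^{KM}_S$ and those governing $\phi^{KM}_{[0,1]\times D_2}$.

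First, I would fix pseudo-diagram realizations of the cobordism $S \sqcup ([0,1]\times D_2)$ that restrict on the two boundary components to the realizations of $D_1 \sqcup D_2$ and $D'_1 \sqcup D_2$ used to construct $\Psi$. Applying the excision argument underlying \Cref{excision lemma} to these cobordisms (and to any parameter family involved) produces, at each cube vertex and after passing to the $E^1$ (respectively $qE^0$) pages, an identification
\[
    E^1(\phi^{KM}_{S \sqcup ([0,1]\times D_2)}) \circ E^1(\Psi) \;\simeq\; E^1(\Psi) \circ \bigl(E^1(\phi^{KM}_S) \otimes E^1(\phi^{KM}_{[0,1]\times D_2})\bigr)
\]
up to a filtered chain homotopy of the required $(h,q)$-order, and analogously for $qE^0$. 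Next, I would argue that $\phi^{KM}_{[0,1]\times D_2}$ is filtered chain-homotopic to $\id_{\CKh^\sharp(D_2)}$: this follows from functoriality of $I^\sharp$ applied to the trivial movie on $D_2$ together with the Reidemeister and handle-move invariance recorded in \Cref{prop:KM_map_R_moves,prop:KM_map_handles}, since any decomposition of $[0,1]\times D_2$ into elementary moves is invertible and induces the identity on $I^\sharp(D_2)$. Tensoring this homotopy with $E^1(\phi^{KM}_S)$ (resp.\ $qE^0(\phi^{KM}_S)$) and composing with the previous identification yields the commutative diagrams of the lemma.

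The main obstacle is the compatibility of the product decomposition with the $h$- and $q$-filtrations at the chain level: while the factorization of moduli spaces on a disjoint union is geometrically exact, the map $\Psi$ is only a chain map after passing to $E^1$ or $qE^0$ over $\Z$, so the chain homotopy producing the commutativity must be engineered at the filtered-page level and tracked through the spectral sequence. The pseudo-diagram hypothesis on $D_1$, $D_1'$ and $D_2$ is essential precisely here, ensuring that the excision argument applies uniformly across all vertices of the cube of resolutions and that no reducibles or unexpected bubbling corrupts the product structure of the parametrized moduli spaces appearing in $\phi^{KM}$.
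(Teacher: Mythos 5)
The central claim in your proposal --- that the parametrized moduli spaces underlying $\phi^{KM}_{S \sqcup ([0,1]\times D_2)}$ ``split as products of the moduli spaces governing $\phi^{KM}_S$ and those governing $\phi^{KM}_{[0,1]\times D_2}$'' --- is incorrect, and it is the load-bearing step. The map $\phi^{KM}_{S \sqcup ([0,1]\times D_2)}$ acts on $\CKh^\sharp(D_1 \sqcup D_2)$, whose generators and moduli spaces live over a \emph{single} copy of $[0,1]\times S^3$ with a \emph{single} Hopf earring shared by both $D_1$ and $D_2$. Even though the link cobordism is a disjoint union of surfaces, the ambient 4-manifold is connected, the singular bundle data is connected, and the instantons do not factor; indeed, the entire reason the excision map $\Psi$ is a nontrivial construction (involving the excision 4-manifold $W$ with its one 1-handle, two 2-handles and one 3-handle from \Cref{topological computation of excision}) is that $\CKh^\sharp(D_1 \sqcup D_2)$ is not naively a tensor product. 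A product decomposition of moduli spaces \emph{does} hold on the domain $\CKh^\sharp(D_1)\otimes\CKh^\sharp(D_2)$ of $\Psi$, where there are genuinely two copies of $S^3$; but that side carries $E^1(\phi^{KM}_S)\otimes\id$, not $\phi^{KM}_{S \sqcup ([0,1]\times D_2)}$, so it does not give what you need.

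Once that claim is dropped, ``applying the excision argument underlying \Cref{excision lemma}'' is too vague to supply the chain homotopy. What is actually required, and what the paper does, is a gluing and boundary-counting argument in which the excision cobordism $W$ is composed with the elementary move and one counts ends of the resulting one-dimensional parametrized moduli spaces $\breve{M}^{+}_{uv;w}(\alpha,\beta;\gamma)_1$, exploiting the pseudo-diagram hypothesis to kill the undesirable boundary faces. This is carried out separately for each type of elementary move: $1$-handles and $0$/$2$-handles via \Cref{general rel} (boundary counting over $\partial G^{+}_{uv;w}$), traces of planar isotopy via the twisted excision family $G^{T}_{uv;w}$ and \Cref{excision_T}, and crossing-adding/dropping via an explicit filtered chain homotopy $H=[\Psi_{1-1},\Psi_{0-1}]$ built from the excision components (\Cref{Add-Drop}). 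Since every $\phi^{KM}_S$ for an elementary cobordism is a composition of these primitives, the squares are stacked and composed. Your proposal would also need each of these case-by-case boundary identities, and invoking excision at the level of a black box does not provide them. Finally, the detour through showing $\phi^{KM}_{[0,1]\times D_2}\simeq\id$ at the filtered level is both unnecessary (the lemma genuinely involves $\id$, and the paper works with $\id$ directly) and circular: establishing that filtered homotopy at the $E^1$ and $qE^0$ pages would itself require the machinery being proved.
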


\begin{lem}\label{compatibility with Khovanov}
If $S$ is 
%either of $0,1,2$-handle attachments,
a planar handle attachment, 
then the diagrams (\ref{eq:h-filt}) and (\ref{eq:q-filt}) are commutative.
\end{lem}

\subsection{Proof of \Cref{prop:main1_reduced}}
Now, we give a proof of \Cref{prop:main1_reduced}. 
\begin{proof}
By \Cref{compatibility with Khovanov}, we only need to prove the commutativity of the diagrams (\ref{eq:h-filt}) and (\ref{eq:q-filt}) for the cases where $S$ is either of Reidemeister moves.
Moreover, Lemmas \ref{excision lemma} and \ref{disjoint lemma}
imply that the commutativity of the bottom face in the following cubic diagram is equivalent to that of the top face.
(The following cubic diagram is related to (\ref{eq:h-filt}), while a similar diagram related to (\ref{eq:q-filt}) is also obtained.)
\[
\begin{tikzcd}[row sep = 40pt]
& \CKh(D^*_1)\otimes \CKh(D^*_2) \arrow[d, "\cong"] \arrow[r, "\Kh(S^*)\otimes \id"]
& \CKh(D'^*_1)\otimes \CKh(D^*_2) \arrow[d, "\cong"] \\
& \CKh(D^*_1 \sqcup D^*_2) 
\arrow[r, "{\Kh(S^* \sqcup [0,1]\times D^*_2)}"]
\arrow[ldd, pos=0.7, "\gamma"', leftarrow] 
& \CKh(D'^*_1 \sqcup D^*_2)\\
E^1(\CKh^\sharp(D_1))\otimes E^1(\CKh^\sharp(D_2)) 
\arrow[d, "E^1(\Psi)"] 
\arrow[r, "E^1(\phi^\sharp)\otimes \id"] 
\arrow[ruu, pos=0.15, "\gamma \otimes \gamma"] &
E^1(\CKh^\sharp(D'_1))\otimes E^1(\CKh^\sharp(D_2)) \arrow[d, "E^1(\Psi)"] 
\arrow[ruu, pos=0.15, "\gamma \otimes \gamma"] & \\
E^1(\CKh^\sharp(D_1 \sqcup D_2)) 
\arrow[r, "{E^1(\phi^\sharp_{S \sqcup [0,1]\times D_2})}"] & 
E^1(\CKh^\sharp(D'_1 \sqcup D_2)) 
\arrow[ruu, pos=0.35, "\gamma"']& 
\end{tikzcd}
\]
Consequently, we only need to prove the commutativity of the diagrams (\ref{eq:h-filt}) and (\ref{eq:q-filt}) 
for the maps 
\begin{gather*}
\phi^{KM}_{R1}\circ \phi^{KM}_{h^0}, \quad \phi^{KM}_{h^2}\circ\phi^{KM}_{R1^{-1}}, \quad
\phi^{KM}_{R2}\circ (\phi^{KM}_{h^0})^2, \quad (\phi^{KM}_{h^2})^2\circ\phi^{KM}_{R2^{-1}}
\quad \text{and} \quad
\phi^{KM}_{R3}\circ (\phi^{KM}_{R2})^3\circ (\phi^{KM}_{h^0})^3,
\end{gather*}
which are corresponding to the movies
shown in \Cref{fig:special movies}.

\begin{figure}[t]
    \centering
    \includegraphics[width=0.6\linewidth]{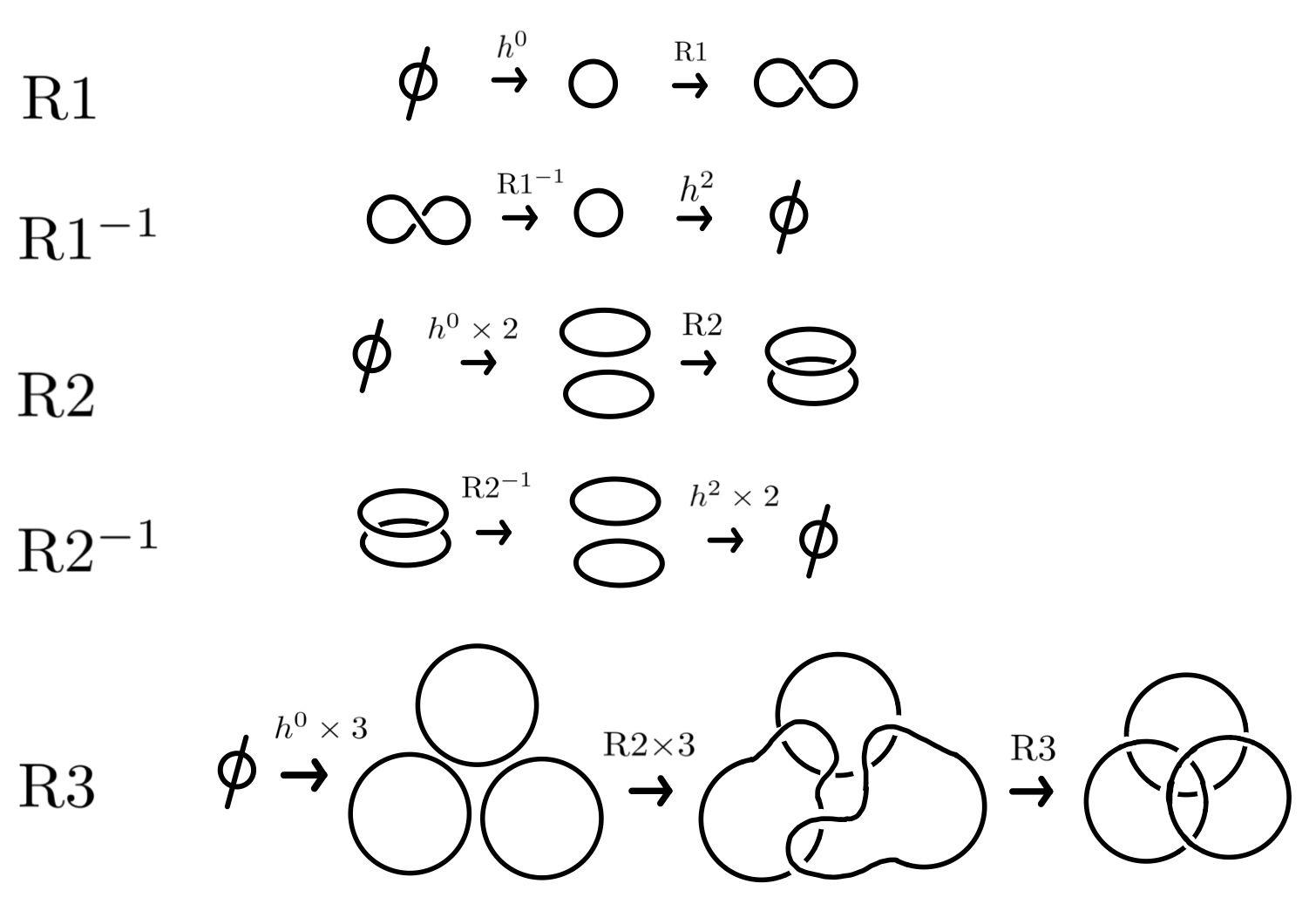}
    \caption{}
    \label{fig:special movies}
\end{figure}
Here we note that all the above maps are either of the two forms
\[
\phi \circ (\phi^{KM}_{h^0})^l \quad \text{and} \quad
(\phi^{KM}_{h^2})^l \circ \phi
\]
where $\phi\colon \CKh^\sharp(D) \to \CKh^\sharp(D')$ is a chain homotopy equivalence map derived from a sequence of Reidemeister moves between two diagrams $D,D'$ for the $l$ component unlink. (We denote the resulting cobordism by 
$S$.) Therefore, the remaining part of the proof can be decomposed into the following four claims.

\begin{claim}
If we equip $(E^1(\CKh^\sharp(D^{(')})),E^1(d^\sharp))$ with
the $q$-filtration induced from the equality 
$\left(E^1(\CKh^\sharp(D^{(')})),E^1(d^\sharp) \right) = \left(\CKh^\sharp(D^{(')}), d^{\sharp 1}\right)$, then the map
\[
\left(E^1(\phi)\right)_* \colon H_*(E^1(\CKh^\sharp(D))) \to H_*(E^1(\CKh^\sharp(D')))
\]
is a $q$-filterd isomorphism. Namely, for each $p \in \Z$, we have
\[
\left(E^1(\phi)\right)_*\left(\mF^q_pH_*(E^1(\CKh^\sharp(D)))\right) =
\mF^q_pH_*(E^1(\CKh^\sharp(D'))),
\]
where 
\[
\mF^q_pH_*(E^1(\CKh^\sharp(D))) := 
\{[x] \in H_*(E^1(\CKh^\sharp(D))) \mid 
x \in E^1(\CKh^\sharp(D)),\  \gr_q(x) \geq p\}.
\]
Moreover, the map $\gamma_*^{-1} \circ \Kh(S) \circ \gamma_*$ is also a q-filtered isomorphism.
\end{claim}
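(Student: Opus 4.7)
The plan is to handle the two assertions of the claim separately. For $(E^1(\phi))_*$, the key is to extract from the filtered chain homotopy equivalence structure of $\phi$ a genuine chain homotopy equivalence on the $E^1$-page that still respects the $q$-filtration. For $\gamma_*^{-1} \circ \Kh(S^*) \circ \gamma_*$, it will suffice to observe that it is a composition of $q$-graded isomorphisms.

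For the first assertion, decompose $\phi = \phi_m \circ \cdots \circ \phi_1$ into Reidemeister-move factors supplied by \Cref{prop:KM_map_R_moves}. Each $\phi_i$ admits a homotopy inverse $\psi_i$ with $(\ord_h,\ord_q)\geq(0,0)$, and chain homotopies $\phi_i\psi_i-\id = d^\sharp H_i + H_i d^\sharp$ and $\psi_i\phi_i-\id = d^\sharp H_i' + H_i' d^\sharp$ of orders $\geq(-1,0)$. An inductive composition then produces a homotopy inverse $\psi$ for $\phi$ with $(\ord_h,\ord_q)\geq(0,0)$ and homotopies $H,H'$ of orders $\geq(-1,0)$. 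Writing each operator as a sum of its $h$-homogeneous components and using $\ord_h(d^\sharp)\geq 1$, the $h$-order-zero part of $\phi\psi-\id = d^\sharp H + H d^\sharp$ reduces to
\[
    \phi^0\psi^0 - \id = d^{\sharp 1}H^{-1} + H^{-1}d^{\sharp 1},
\]
since the only terms contributing at total $h$-order $0$ are $d^{\sharp 1}H^{-1}$ and $H^{-1}d^{\sharp 1}$. Under the identification $(E^1(\CKh^\sharp),E^1(d^\sharp)) = (\CKh^\sharp,d^{\sharp 1})$, this says that $E^1(\phi)=\phi^0$ and $E^1(\psi)=\psi^0$ are mutual chain homotopy inverses on $E^1$, with homotopy $H^{-1}$; and since the full $H$ had $\ord_q\geq 0$, so does $H^{-1}$. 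The symmetric argument applied to $\psi\phi - \id$ completes the picture. Hence $E^1(\phi)$ is a $q$-filtered chain homotopy equivalence on $E^1$. A direct diagram chase then shows that such an equivalence induces a $q$-filtered isomorphism on homology, namely the inclusions $(E^1(\phi))_*(q\mF_p)\subseteq q\mF_p$ and $(E^1(\psi))_*(q\mF_p)\subseteq q\mF_p$ combine with $(E^1(\phi))_*(E^1(\psi))_*=\id$ to force equality.

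For the second assertion, \Cref{prop:equal_to_Kh} asserts that $\gr_q(\gamma(x))=\gr_q(x)$, so $\gamma_*$ and $\gamma_*^{-1}$ strictly preserve the $q$-grading. The cobordism $S$ is built from Reidemeister moves only and therefore represents (topologically) a cylinder over the $l$-component unlink; by the functoriality of Khovanov homology together with \Cref{thm:kh-inv}, the map $\Kh(S^*)$ is a $q$-graded isomorphism. The composition $\gamma_*^{-1}\circ \Kh(S^*)\circ \gamma_*$ is thus a $q$-graded, hence $q$-filtered, isomorphism.

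The main subtlety is the passage from the $h$-filtered chain homotopy equivalence data on $\CKh^\sharp$ to a genuine chain homotopy equivalence on $E^1$ respecting the $q$-filtration. Although the homotopies only have $h$-order $\geq -1$, the fact that $d^\sharp$ has $h$-order $\geq 1$ ensures that exactly one term survives in the $h$-order-zero part, producing a clean homotopy on $E^1$. The $q$-bounds propagate without loss through this reduction, which delivers the desired $q$-filtered isomorphism.
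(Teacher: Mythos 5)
Your proof is correct and follows essentially the same route as the paper, whose proof is a one-line citation of \Cref{prop:equal_to_Kh} and \Cref{prop:KM_map_R_moves}. You have simply spelled out the details implicit in that citation: that the bi-filtered homotopy-equivalence data on $\CKh^\sharp$ (homotopies of $h$-order $\geq -1$, $q$-order $\geq 0$) descends to a $q$-filtered chain homotopy equivalence on the $E^1$-page, and that a filtered equivalence with filtered inverse forces a filtered isomorphism on homology.
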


\begin{proof}
Immediately follows from 
Propositions \ref{prop:equal_to_Kh} and \ref{prop:KM_map_R_moves}.
\end{proof}

\begin{claim}
If $f,g \colon H_*(E^1(\CKh^\sharp(D))) \to H_*(E^1(\CKh^\sharp(D'))$ are $q$-filtered isomorpshisms, 
then we have
\[
f \circ \left(E^1(\phi^{KM}_{h^0})^l)\right)_*
= \pm g \circ \left(E^1(\phi^{KM}_{h^0})^l\right)_*
\quad \text{and} \quad
\left(E^1(\phi^{KM}_{h^2})^l\right)_* \circ f
= \pm \left(E^1(\phi^{KM}_{h^2})^l\right)_* \circ g.
\]
\end{claim}

\begin{proof}
The map $f \circ \left(E^1(\phi^{KM}_{h^0})^l)\right)_*$ is decomposed as follows;
\[
\begin{tikzcd} 
H_*(E^1(\CKh^\sharp(\emptyset)) 
\arrow[r, equal]
&[-.5cm] \mF^q_{0}H_*(E^1(\CKh^\sharp(\emptyset))\\
\arrow[r, "(E^1(\phi^{KM}_{h^0})^l))_*" ]
&[1cm]  \mF^q_{l}H_*(E^1(\CKh^\sharp(D))
\arrow[r, "f'"]
&[-.2cm]  \mF^q_{l}H_*(E^1(\CKh^\sharp(D'))
\arrow[r, hookrightarrow]
&[-.2cm] H_*(E^1(\CKh^\sharp(D')),
\end{tikzcd}
\]
where $f':= f|_{\mF^q_{l}H_*(E^1(\CKh^\sharp(D))}$.
Since $f$ is a $q$-filtered isomorphism, 
$f'$ is an isomorphism between infinite cyclic groups, and hence unique up to sign.

Similarly, the map $\left(E^1(\phi^{KM}_{h^2})^l\right)_* \circ f$ is decomposed as follows;
\[
\begin{tikzcd} 
H_*(E^1(\CKh^\sharp(D))
\arrow[r, two heads]
&[-.2cm] 
\frac{H_*(E^1(\CKh^\sharp(D))}{\mF^q_{-l+1}H_*(E^1(\CKh^\sharp(D))}
\arrow[r, "f''"]
&  
\frac{H_*(E^1(\CKh^\sharp(D')}{\mF^q_{-l+1}H_*(E^1(\CKh^\sharp(D'))}\\
&
\phantom{aaaaaaaaa}
\arrow[r, "(E^1(\phi^{KM}_{h^2})^l)_*"]
&  
\frac{H_*(E^1(\CKh^\sharp(\emptyset))}{\mF^q_{1}H_*(E^1(\CKh^\sharp(\emptyset))}
\arrow[r, equal]
&[-.8cm] H_*(E^1(\CKh^\sharp(\emptyset)),
\end{tikzcd}
\]
where $f''$ is induced from $f$ on the quotient groups.
Since $f$ is a 
$q$-filtered isomorphism, 
$f''$ is also an isomorphism between infinite cyclic groups, and hence unique up to sign.
\end{proof}

\begin{claim}
For the decompositions $qE^0(\CKh^\sharp(D^{(')}))=\bigoplus_{p\in \Z}qE^0_{p}(\CKh^\sharp(D^{(')}))$ as chain complexes, the restriction
\[
qE^0_p(\phi) := qE^0(\phi)|_{qE^0_p(\CKh^\sharp(D))}
\colon qE^0_p(\CKh^\sharp(D)) \to qE^0_p(\CKh^\sharp(D'))
\]
is a chain homotopy equivalence map for each $p\in \Z$.
As a consequence, we have the direct decomposition
\begin{equation}   
\label{eq:q-graded}
(qE^0(\phi))_*=\bigoplus_{p \in \Z}
\left(qE^0_p(\phi)\right)_* \colon
\bigoplus_{p\in \Z} H_*(qE^0_p(\CKh^\sharp(D))) \to \bigoplus_{p\in \Z}H_*(qE^0_p(\CKh^\sharp(D')))
\end{equation}
of isomorphisms. 
\end{claim}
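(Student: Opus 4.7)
The plan is to exhibit $qE^0(\phi)$ as a chain homotopy equivalence in the category of q-graded chain complexes; once that is in hand, the splitting $qE^0(\phi)=\bigoplus_p qE^0_p(\phi)$ forces each summand to be a chain homotopy equivalence, and the consequence (\ref{eq:q-graded}) follows upon passing to homology.

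By construction, $\phi=\phi^{KM}_{R_k}\circ\cdots\circ\phi^{KM}_{R_1}$ is a composition of Reidemeister move maps. By \Cref{prop:KM_map_R_moves}, each factor $\phi^{KM}_{R_i}$ has $\ord_q\geq 0$ and admits a homotopy inverse $\phi^{KM}_{R_i^{-1}}$ with $\ord_q\geq 0$, together with chain homotopies $\phi^{KM}_{R_i^{-1}}\circ\phi^{KM}_{R_i}\simeq\id$ and $\phi^{KM}_{R_i}\circ\phi^{KM}_{R_i^{-1}}\simeq\id$ of bi-order $\geq(-1,0)$; in particular, all of these have $\ord_q\geq 0$. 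The standard telescoping formula for composing chain homotopy equivalences then produces a homotopy inverse $\bar\phi:=\phi^{KM}_{R_1^{-1}}\circ\cdots\circ\phi^{KM}_{R_k^{-1}}$ and chain homotopies $\bar\phi\circ\phi\simeq\id$ and $\phi\circ\bar\phi\simeq\id$ built as sums of conjugates of the elementary homotopies. Since $\ord_q\geq 0$ is preserved by composition and addition, these composite homotopies also have $\ord_q\geq 0$.

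Now apply the associated graded functor $qE^0$ with respect to the q-filtration. Because all relevant maps and chain homotopies have $\ord_q\geq 0$, each descends to a grading-preserving map on $qE^0$, and functoriality yields that $qE^0(\phi)$ is a chain homotopy equivalence with inverse $qE^0(\bar\phi)$ and with chain homotopies $qE^0$ of the above homotopies. The source, target, the differential $qE^0(d^\sharp)=d^{\sharp 1}$, the map $qE^0(\phi)$, its inverse and both chain homotopies all split as direct sums over the q-grading $p$. Therefore the equivalence decomposes componentwise, giving that $qE^0_p(\phi)$ is a chain homotopy equivalence for every $p\in\Z$ with inverse $qE^0_p(\bar\phi)$.

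The argument is essentially formal bookkeeping of q-orders, so no real obstacle arises. The only point to be careful about is that the composite chain homotopies used to glue the elementary equivalences $\phi^{KM}_{R_i}\simeq\phi^{KM}_{R_i^{-1}}$ inherit $\ord_q\geq 0$; this is immediate from the telescope formula and the closure of the q-filtered maps of order $\geq 0$ under composition and addition. Passing to homology of each $qE^0_p(\phi)$ then yields the direct sum decomposition (\ref{eq:q-graded}) of isomorphisms, completing the claim.
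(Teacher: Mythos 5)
Your proof is correct and elaborates precisely the ``elementary arguments in homological algebra'' that the paper invokes in a single sentence: since $\phi$, its homotopy inverse, and the chain homotopies all have $\ord_q \geq 0$ (by \Cref{prop:KM_map_R_moves} and the telescoping construction), they descend to grading-preserving maps on $qE^0$, and the resulting homotopy equivalence necessarily splits degree-by-degree.
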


\begin{proof}
This immediately follows from elementary arguments in homological algebra.    
\end{proof}

\begin{rem}
We call an isomorphism satisfying (\ref{eq:q-graded})
{\it a $q$-graded isomorphism}. Note that for the cobordism $S$ coresponding to $\phi$, the map $\gamma^{-1} \circ \Kh(S) \circ \gamma$ is also a $q$-graded isomorphism.
\end{rem}

\begin{claim}
If $f,g \colon H_*(qE^0(\CKh^\sharp(D))) \to H_*(qE^0(\CKh^\sharp(D'))$ are $q$-graded isomorpshisms, 
then we have
\[
f \circ \left(qE^0(\phi^{KM}_{h^0})^l)\right)_*
= \pm g \circ \left(qE^0(\phi^{KM}_{h^0})^l\right)_*
\quad \text{and} \quad
\left(qE^0(\phi^{KM}_{h^2})^l\right)_* \circ f
= \pm \left(qE^0(\phi^{KM}_{h^2})^l\right)_* \circ g.
\]
\end{claim}

\begin{proof}
The map $f \circ \left(qE^0(\phi^{KM}_{h^0})^l)\right)_*$ is decomposed as follows;
\[
\begin{tikzcd} 
H_*(qE^0(\CKh^\sharp(\emptyset)) 
\arrow[r, equal]
&[-.5cm] H_*(qE^0_{0}(\CKh^\sharp(\emptyset))\\
\arrow[r, "(qE^0_0(\phi^{KM}_{h^0})^l))_*" ]
&[1cm] H_*(qE^0_l(\CKh^\sharp(D))
\arrow[r, "f_l"]
&[-.2cm]  H_*(qE^0_l(\CKh^\sharp(D'))
\arrow[r, hookrightarrow]
&[-.2cm] H_*(qE^0(\CKh^\sharp(D')),
\end{tikzcd}
\]
where $f_l = f|_{H_*(qE^0_l(\CKh^\sharp(D)))}$.
Since $f$ is a $q$-graded isomorphism, 
$f_l$ is an isomorphism between infinite cyclic groups, and hence unique up to sign.

Similarly, the map $\left(qE^0(\phi^{KM}_{h^2})^l\right)_* \circ f$ is decomposed as follows;
\[
\begin{tikzcd} 
H_*(qE^0(\CKh^\sharp(D))
\arrow[r, two heads]
&[-.2cm] 
H_*(qE^0_{-l}(\CKh^\sharp(D))
\arrow[r, "f_{-l}"]
&  
H_*(qE^0_{-l}(\CKh^\sharp(D'))\\
&
\phantom{aaaaaaaaa}
\arrow[r, "(qE^0(\phi^{KM}_{h^2})^l)_*"]
&  
H_*(qE^0_{0}(\CKh^\sharp(\emptyset))
\arrow[r, equal]
&[-.8cm] H_*(qE^0(\CKh^\sharp(\emptyset)),
\end{tikzcd}
\]
where $f_{-l} = f|_{H_*(qE^0_{-l}(\CKh^\sharp(D)))}$.
Since $f$ is a $q$-graded isomorphism, 
$f_{-l}$ is also an isomorphism between infinite cyclic groups, and hence unique up to sign.
\end{proof}

Now, the proof of \Cref{prop:main1_reduced} is completed.
\end{proof}

Hence, the proof of \cref{thm:main} reduces to proving \cref{excision lemma}, \cref{disjoint lemma}, and \cref{compatibility with Khovanov}. 
\section{Preliminaries for instanton theory}\label{Preliminaries for instanton theory}

\subsection{Review of singular instanton  homology}
\label{subsection:Review of instanton knot homology}
In this subsection, we review the construction of 
%the unreduced instanton functor:
the singular instanton functor:
%from a certain category of links 
\[I^{\sharp}:\bf{Link}(\R^3)\rightarrow \bf{Abel}_{\Z/4},\]
where $\bf{Link}(\R^3)$
denotes the category whose objects are links in $\R^3$ and morphisms are isotopy classes of (possibly unoriented and disconnected) link cobordisms in $[0,1]\times \R^3$ between links, and the category $\bf{Abel}_{\Z/4}$ denotes the category whose objects are absolutely $\Z/4$ graded $\Z$-modules and morphisms are 
%$\Z$-module maps with certain grading shifts. 
graded $\Z$-module maps with any grading shifts. 

We briefly review the construction of singular instanton homology groups following Kronheimer and Mrowka's work \cite{KM11, KM11u}. For more details, see \cite{KM11, KM11u}. 
Let $L\subset Y$ be a link in an oriented $3$-manifold $Y$.   We fix an orbifold structure on $Y$ which is singular along the link $L$, whose local model is described by the quotient of $\R^2\times \R$ by the $\Z/2$-action  \[(x_1, x_2, x_3)\mapsto (-x_1, -x_2, x_3)\]
where the last factor %of
$\R$ corresponds to the link locus.
Let $\check{Y}$ denote the $3$-manifold $Y$ equipped with the above orbifold structure. 
We fix an $SO(3)$-bundle ${P}\rightarrow Y\setminus L$ which extends as an orbifold bundle $\check{P}\rightarrow \check{Y}$.
Suppose the Poincar\'e dual of the second Stiefel--Whitney class 
%$w_2 (E)$ 
$w_2 (P)$ 
is represented by a compact $1$-manifold $\omega$ in $(Y, L)$, which $\#\partial \omega \cap L_j$ is odd for at least one component $L_j \subset L$. 
From $\omega$, one can associate a {\it singular bundle data}, which provides an  $SO(3)$-bundle and a model $SO(3)$-orbifold connection which is independent of orientations of links. 
%We call such an $SO(3)$-bundle $\check{P}$ an {\it admissible bundle}. 
An $SO(3)$-orbifold connection on $\check{P}$ means a smooth $SO(3)$-connection defined over the link complement $Y\setminus L$ which extends smoothly over the $\Z/2$-branched cover of orbifold charts. One can see that an $SO(3)$-orbifold connection has a holonomy of order 2 along a small meridian of each link component. We consider the space of $SO(3)$-orbifold connections on the bundle $\check{P}$, denoted by $\mathcal{C}(Y, L, \check{P})$. 
%The (determinant-one) automorphism group on the orbifold bundle $\check{E}$ is called the gauge group denoted by $\mathcal{G}(Y, L, \check{P})$.
%The gauge group acts smoothly on the space of connections $\mathcal{C}(Y, L, \check{P})$, and the quotient space is called the configuration space, denoted by $\mathcal{B}(Y, L, \check{P})$.
 A smooth $SO(3)$-connection on $P\vert_{Y\setminus L}$ of order 2 holonomy along the shrinking meridians of $L$  extends as an orbifold $SO(3)$-connection on $\check{P}$.
 The notation $\mathcal{C}(Y, L)$ denotes the affine space of $SO(3)$-orbifold connections on $\check{P}$ with an appropriate Sobolev completion. 
We consider the bundle of group 
\[G(Y, L, \check{P}):=\check{P}\times_{SO(3)} SU(2).\]
over the orbifold $\check{Y}$, called the {\it determinant one gauge group}. 
Smooth orbifold sections of $G(Y, L, \check{P})$ are called determinant one gauge transformations of $\check{P}$.
The {\it gauge group} $\mathcal{G}(Y, L, \check{P})$ is an appropriate Sobolev completion of the space of smooth orbifold sections of $G(Y, L, \check{P})$. 
As in the non-singular case, the group $\mathcal{G}(Y, L, \check{P})$ has a structure of Hilbert Lie group and acts smoothly on $\mathcal{C}(Y, L, \check{P})$.
The quotient space 
\[\mathcal{B}(Y, L, \check{P}) := \mathcal{C}(Y, L, \check{P})/ \mathcal{G}(Y, L, \check{P})
\]
is called the {\it configuration space}. 
Framed singular instanton homology group is, roughly speaking, the infinite-dimensional analogue of Morse homology on the space $\mathcal{B}(Y, L,\check{P})$ for the Chern-Simons functional \[CS:\mathcal{B}(Y, L, \check{P}) \rightarrow \R/\Z.\]

In general, critical points of the Chern-Simons functional can be degenerated. 
We instead consider the $\pi$-perturbed Chern-Simons functional \[CS_{\pi}=CS+f_{\pi}.\]
Here, $\pi$ is assumed to be an element of the specific $l^1$-Banach space, and $f_{\pi}$ be an associated gauge invariant $\R/\Z$-valued functional.
The critical point set of the $\pi$-perturbed Chern-Simons functional $CS_{\pi}$ is denoted by
$\mathfrak{C}_{\pi}$. 
For an arbitrary small generic choice of perturbation $\pi$, we can assume that $\mathfrak{C}_{\pi}$ is non-degenerate \cite[Proposition 3.12]{KM11}.
 Then, the singular instanton homology $I^{\omega}(Y, L)$ is defined as the homology of the chain complex: 
 %$C^{\omega}(Y,L)$
 %defined as follows:
%
\begin{eqnarray*}
    C^{\omega}(Y, L)&:=&\displaystyle \bigoplus_{\beta\in \mathfrak{C}^*_{\pi}}\Z\cdot \beta,\\
    d(\beta_1)&:=&\sum_{\beta_2\in \mathfrak{C}^*_{\pi}}n(\beta_1, \beta_2)\beta_2, 
\end{eqnarray*}
where $n(\beta_1, \beta_2)$ is defined by oriented counting of the $0$-dimensional components of the moduli space 
\[
\breve{M}(\beta_1, \beta_2) := \left\{ [A] \in \mathcal{B}(\R \times \check{Y}; \beta_1, \beta_2,  p^*\check{P} ) \ \middle| \  F^+_A + V_\pi(A) =0 \right\}/\R,
\]
where 
\begin{itemize}
\item $p$ denotes the projection $\R \times \check{Y} \to \check{Y}$,
\item $F_A^+$ denotes the self-dual part of the curvature of $A$ with respect to $p^* \check{g} + dt^2$ for a choice of an orbifold metric for $(Y, L)$, where $t$ denotes the cylindrical coordinate, 
    \item $V_\pi(-)$ denotes the holonomy perturbation of the ASD-equation on $\R \times \check{Y}$ corresponding to $\pi$,  and 
    \item $\mathcal{B}(\R \times \check{Y};  \beta_1, \beta_2 , \pi^* \check{P}) $ denote the $4$-dimensional version of the orbifold configuration space whose limiting values are $\beta_1$ and $\beta_2$ respectively. 
\end{itemize}
We shall explain how these orientations are defined in \Cref{ori app}.

Let us move to link cobordism maps in this theory. %For our purpose, we only explain the cobordism maps for $I^\sharp$ for the cobordisms in $4$-manifold cobordisms. 
We fix a smooth compact $4$-manifold $W$ from an oriented $3$-manifold $Y$ to another $3$-manifold $Y'$ and 
%links $L$ and $L'$ in $Y$ and $Y'$ with admissible bundles $\om$ and $\om'$ respectively.
a link $L$ in $Y$ (resp.\ $L'$ in $Y'$) with admissible bundle $\om$ (resp.\ $\om'$).
Let us put a link cobordism $S \subset W$ from $L \subset Y$ to $L' \subset Y'$ with an extension of the admissible bundles of the boundary.  Then, for choices of 
\begin{itemize}
    \item orbifold Rieman metrics $\check{g}_L$ and $\check{g}_{L'}$, non-degenerate and regular holonomy perturbations $\pi_L$ and $\pi_{L'}$ for Chern-Simons functional of $(Y, L)$ and $(Y', L')$, 
    \item an orbifold Riemann metric $\check{g}$ on  
    \[
   \overline{S}:=  (-\infty, -1] \times L\cup S \cup [1, \infty) \times L'  
   \ \subset \ (-\infty, 0 ]\times S^3 \cup_{Y} W\cup_{Y'} [0, \infty )\times Y' =: \overline{W}
    \]
    which are $dt^2 + \check{g}_L$ and $dt^2+ \check{g}_{L'}$ on the ends, 
    \item holonomy perturbations on $(\overline{W}, \overline{S})$ so that the finite energy instanton moduli spaces are regular, 
    \item critical points $\beta_1$, $\beta_2$ of perturbed Chern--Simons functionals for $(Y, L)$ and $(Y', L')$, 
\end{itemize}
one can associate %singular instanton moduli spaces
a singular instanton moduli space
\begin{align}\label{gen moduli}
{M}_{\check{g}}(W, S ; \beta_1, \beta_2) := \left\{ [A] \in \mathcal{B}(W, S; \beta_{1}, \beta_{2}, \pi^*\check{P}) 
\ \middle| \  F^{+_{\check{g}} }_A + V_\pi(A) =0  \right\}. 
\end{align}
Then, we define 
\[
I^\sharp(S) : I ^\omega (Y, L) \to I ^\omega (Y', L')
\]
by just counting the $0$-dimensional part of the instanton moduli space ${M}_{\check{g}}(W, S ; \beta_1, \beta_2)$. 
This makes sense even for non-orientable surfaces, and such bundle data can be described as singular bundle data.  For the details, see \cite{KM11u}. 

Now, for a given link $L\subset \R^3$, the {\it 
%framed knot instanton homology
framed singular instanton homology
} $I^{\sharp}$ is defined as 
 \begin{eqnarray*}
 I^{\sharp}(L):=I^{\omega}(\R^3 \cup \{\infty \} , L\sqcup H),  
 \end{eqnarray*} 
 where $H$ is the Hopf link put near the $\infty \in \R^3 \cup \{\infty\} = S^3$ with an arc connecting two components as $\omega$.

Let $W$ be an oriented compact 4-manifold cobordism from $S^3 \to S^3$ with a fixed path connecting base points in $S^3$.
 For a given link cobordism $S \subset W$ with an path connecting base points $\infty \in S^3$ from $L \subset S^3$ to $L' \subset S^3$, we consider the associated link cobordism 
\[
{S}^\sharp := S \sqcup [0,1]\times H
\ \subset \  W 
%= [0,1]\times S^3  
\]
with the admissible bundle $\check{P}$ whose Stiefel-Whitney class is given by $[0,1]\times \omega$ in $W$, where the embedding of $[0,1]\times H$ into $W$ is made by taking a neighborhood of the fixed path. 
We shall define the {\it framed cobordism map} by 
\[
I^\sharp(S) := I^{[0,1]\times \omega}_S : I^\sharp(L) \to  I^\sharp(L'). 
\]
Also, when a dot in $S$ is associated, one can define the dotted cobordism map 
\[
I ^\sharp(S, \cdot)   : I^\sharp(L) \to  I^\sharp(L')
\]
by evaluating the first Chern class of the $U(1)$-universal bundle arising from the base point fibration for the point in the instanton moduli spaces. See \cite{KM11u, KM21} for the details of the maps.

Again the orientations will be explained in \Cref{ori app}. Note that the cobordism maps are well-defined even for 4-manifolds with more than two boundary components with suitably fixed paths. See \cite{KM11u}.

\subsection{Instanton cube complexes}

In this section, we briefly review the construction of instanton cube complexes. For a given link diagram $D$ of $L \subset \R^3$, we shall introduce a doubly filtered complex $\CKh^\sharp (D)$
which induces a spectral sequence from $\Kh(L^*)$ to $I^\sharp(L)$.

Let $L$ be a link in $\R^3 \subset S^3$. The point $\infty \in S^3$ is regarded as the base point.
A {\it crossing} of $L$ means an embedding of pairs
\[
c : (B^3, L_2) \hookrightarrow (\R^3, L)
\]
which is 
orientation-preserving on $B^3$ and $c(L_2)= c(B^3) \cap L$, where $L_2$ is the (unoriented) standard tangle described in the middle in \cref{fig:1}. 

Take an ordered set $\{c_1, \cdots, c_N \}$
of crossings of $L$. 
For an element $v \in \Z^N$, we define a link $L_v$ by replacing $c_i(L_2)$  with either $c_i(L_0)$, $c_i(L_1)$ or $c_i(L_2)$ depending on the value $v(i) \operatorname{mod} 3$ ($i=1,2,\ldots, N$), where $L_0$, $L_1$ and $L_2$ are given in \cref{fig:1}. 
For a pair $v,u \in \Z^N$ with $v \geq u$, we have a link cobordism
 \[
 S_{vu} : L_v \to L_u \ \subset \  [0,1]\times S^3
 \]
used in \Cref{subsec:Khovanov}. 
 For this link cobordism $S_{vu}  \subset [0,1] \times S^3$, Kronheimer--Mrowka \cite{KM11u}
 constructed a family of orbifold metrics called $G_{vu}$ over 
 \[ S^\sharp :=
\Big( ( -\infty , -1]  \times (L_v \sqcup H) \Big) \cup S_{vu} \cup \Big( [1, \infty)  \times (L_u \sqcup H) \Big)  \ \subset \  \R \times S^3 
 \]
 such that 
 \[
 \dim G_{vu}  = | v -u |_1 = \sum_{i=1 }^N |v_i - u_i|, 
 \]
with a proper $\R $-action obtained as $\R$-translation. It is observed that $| v -u|_1  = -\chi (S_{vu})$.
We will use $\breve{G}_{vu} := G_{vu}/ \R$ whose dimension is 
\[
\begin{cases} 
 | v -u|_1-1 &\text{ if } v \neq u \\ 
  0 & \text{ if } v = u
\end{cases}. 
\]
Let us denote the framed %knot 
singular instanton
complex (by putting the Hopf link with non-trivial Stiefel--Whitney class at $\infty \in S^3$) of $L_v$ by $C_v$ which are again parametrized by $\Z^N$.
In order to get the cube instanton complex, we fix the following data: 
\begin{itemize}
\item orbifold Riemann metrics of $(S^3, L_v)$ for each $v \in \Z^N$,
    \item regular and non-degenerate perturbations $\{\pi_v \}$ of the Chern--Simons functional for $L_v$, 
    \item holonomy perturbation for the cobordisms $S^\sharp_{vu}$ so that $M_{G_{vu}'} (\beta_1, \beta_2; S^\sharp_{vu})$ is regular for each pair of critical points $\beta_1, \beta_2$.
\end{itemize}. 

Then, as signed countings (with certain sign corrections) of the $0$-dimensional component of the parametrized and perturbed ASD-moduli spaces with respect to the family $G_{vu}$
\[
\breve{M}_{vu}(\beta_{1}, \beta_{2})_{0}:= \bigcup_{g \in {\breve{G}_{vu}}} {M}_g ([0,1] \times S^3, S^\sharp_{vu}; \beta_1, \beta_2)_{-\mathrm{dim}\breve{G}_{vu}}
\]
we have a collection of maps
\[
f_{vu} : C_v \to C_u
\]
parametrized by each pair $(u, v) \in \Z^N \times \Z^N$ with $v \geq u$. 
The sign of $f_{vu}$ is given as 
\[
f_{vu}(\beta_1) =\sum_{\beta_2 }  (-1)^{s(v,u)-\mathrm{dim}\breve{G}_{vu}} \#\breve{M}_{vu}(\beta_{1}, \beta_{2})_{0}, 
\]
where 
\[
s(v,u)=\frac{1}{2}|v-u|(|v-u|-1)+ \sum v_i. 
\]

\begin{rem}Here we mention that the definition of $f_{uv}$ does not match with that of \cite{KM11u}. 
We use the notation $\breve{M}_{vu}(\beta_{1}, \beta_{2})_{0}$ to denote the moduli space parametrized by family of metrics ${\breve{G}_{vu}}$ over the standard cobordism $S_{vu}$ with critical limiting points $\beta_{1}$ and $\beta_{2}$ on the ends. In \cite{KM11u}, Kronheimer and Mrowka also use the same notation to denote the essentially same moduli space equipped with a different orientation, which is described as follows: 
%As an oriented moduli space, this difference of orientations is given by the factor $(-1)^{\mathrm{dim}\breve{G}_{vu}}$.
Suppose the moduli space $\breve{M}_{vu}(\beta_{1}, \beta_{2})$ is naturally oriented as a family of instanton moduli spaces.
Put \[\langle\bar{m}_{vu}(\beta_1), \beta_2\rangle:=(-1)^{\mathrm{dim}\breve{G}_{vu}}\#\breve{M}_{vu}(\beta_{1}, \beta_{2})_{0}.\]
Then, with this convention, we have:  
\[
f_{vu} :=\sum_{\beta_2 }  (-1)^{s(v,u)} \bar{m}_{vu}.
\]
\end{rem}
For a pair $(v,u)$, we call it {\it of type $n$} if $v \geq u$ and 
\[
\max\{ v_i - u_i \} = n.  
\] 
The neck stretching argument enables us to prove the following conclusion, which is proven in \cite{KM11u}. 
\begin{thm}
Let $(v,u)$ be a pair of type $n \leq 2$. 
Then one has 
\[
\sum_{\{ w |  v \geq w \geq u \} } f_{wu} \circ  f_{v w} =0. 
\]
\end{thm}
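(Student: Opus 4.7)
The strategy is to realize the stated sum as the signed count of the boundary of a compact oriented $1$-manifold, which must vanish. Fix critical points $\beta_1 \in \mathfrak{C}_{\pi_v}^*$ for $L_v\sqcup H$ and $\beta_2 \in \mathfrak{C}_{\pi_u}^*$ for $L_u\sqcup H$, and consider the parametrized moduli space
\[
\breve{\mathcal{M}} := \bigcup_{g \in \breve{G}_{vu}} M_g(\beta_1,\beta_2;S^\sharp_{vu})_{\,1-\dim \breve{G}_{vu}}\, .
\]
For generic choices of the family of metrics and the holonomy perturbations on $S^\sharp_{vu}$, this is a smooth oriented $1$-manifold whose expected dimension is $1$. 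The coefficient of $\beta_2$ in $\bigl(\sum_w f_{wu}\circ f_{vw}\bigr)(\beta_1)$ will be identified with the signed count of $\partial\overline{\breve{\mathcal{M}}}$, where $\overline{\breve{\mathcal{M}}}$ is the Uhlenbeck--Gromov compactification.

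The first step is to enumerate the ideal boundary strata of $\overline{\breve{\mathcal{M}}}$. There are three possible sources of non-compactness:
\begin{enumerate}
    \item[(i)] trajectory breaking at the incoming end, contributing fiber products $\breve{M}(\beta_1,\beta_1')_0 \times \breve{M}_{vu}(\beta_1',\beta_2)_0$ over intermediate critical points $\beta_1'$ for $L_v\sqcup H$;
    \item[(ii)] trajectory breaking at the outgoing end, contributing $\breve{M}_{vu}(\beta_1,\beta_2')_0 \times \breve{M}(\beta_2',\beta_2)_0$ over intermediate critical points $\beta_2'$ for $L_u\sqcup H$;
    \item[(iii)] neck-stretching within the codimension-one boundary of $\breve{G}_{vu}$, where the family splits the cobordism $S^\sharp_{vu}$ along a separating hypersurface into a pair $S^\sharp_{vw}$ and $S^\sharp_{wu}$ for some intermediate $w$ with $v \geq w \geq u$, contributing $\breve{M}_{vw}(\beta_1,\beta_3)_0 \times \breve{M}_{wu}(\beta_3,\beta_2)_0$ over critical points $\beta_3$ for $L_w\sqcup H$.
\end{enumerate}
Contributions (i) and (ii) recombine into the internal differential compositions $f_{vu}\circ f_{vv}$ and $f_{uu}\circ f_{vu}$, while (iii) accounts precisely for the intermediate terms $f_{wu}\circ f_{vw}$ with $v > w > u$.

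The second step is to verify that no additional degenerations contribute. A priori one must rule out energy concentration (instanton bubbling either at smooth points or along the singular link locus) as well as higher-codimension degenerations of the metric family. This is where the hypothesis $n \leq 2$ enters crucially: the stratification of $\breve{G}_{vu}$ is modeled on a product of intervals/faces of dimensions $v_i-u_i\le n$, whose codimension-one strata correspond bijectively to the factorizations $v\geq w\geq u$ of type (iii). For $n \le 2$, a dimension count on $\dim \breve{M}_{vu}(\beta_1,\beta_2)$ together with the index formulas of Kronheimer--Mrowka shows that any bubble absorbs enough formal dimension that the residual moduli space is empty for generic perturbations; for $n\ge 3$ this argument fails and additional correction terms would appear.

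The last step is the signed boundary count: since $\overline{\breve{\mathcal{M}}}$ is a compact oriented $1$-manifold, $\#\partial\overline{\breve{\mathcal{M}}}=0$, so the total contribution from (i)+(ii)+(iii) vanishes. Summing over $\beta_2$ and reading off the operator identity gives the claim. The main obstacle will be the bubbling/compactness analysis underlying step two, as well as the consistent orientation bookkeeping, which relies on the trivialization of the determinant line bundles over the family $G_{vu}$ to be developed in \cref{ori app}.
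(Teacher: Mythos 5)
Your overall strategy — realize the sum as the signed count of $\partial\overline{\breve{\mathcal{M}}}$ for a compact oriented $1$-manifold $\overline{\breve{\mathcal{M}}} = \breve{M}_{vu}(\beta_1,\beta_2)_1^+$ — is the correct one (the paper cites this result to \cite{KM11u}, and that is indeed the argument there), and your items (i)--(iii) correctly account for the trajectory-breaking strata and the ``standard'' codimension-one faces $\breve{G}_{vw}\times\breve{G}_{wu}$ of the metric family. The gap is in your treatment of the hypothesis $n\le 2$.

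You claim that the $n\le 2$ restriction enters via a bubbling dimension count and that ``the codimension-one strata [of $\breve{G}_{vu}^+$] correspond bijectively to the factorizations $v\ge w\ge u$.'' Neither is right. Instanton bubbling is excluded from a $1$-dimensional parametrized moduli space by exactly the usual transversality/index-drop argument, for \emph{every} $n$; that argument has nothing to do with $n\le 2$. The real issue is the structure of the compactified metric family itself. As soon as some $v_i - u_i = 2$, the family $\breve{G}_{vu}$ acquires an \emph{additional} noncompact end, and its natural compactification $\breve{G}_{vu}^+$ has codimension-one faces of type $\breve{G}^V$ that do \emph{not} correspond to any intermediate resolution $w$ with $v\ge w\ge u$. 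These arise from stretching the metric along a separating $(S^3,S^1)$ that cuts out an $(S^4,\mathbb{RP}^2)\setminus (B^4,B^2)$ piece from the cobordism $S_{20}$ near the relevant crossing (the present paper describes this explicitly in its Section on excision cobordisms, following [KM11u, Section 7.2]). These extra faces must be explicitly accounted for: the reason they do not contribute to the boundary count is not a bubble-index argument but the failure of admissibility (the non-integral condition of [KM11u, Def.\ 3.1]) for the $SO(3)$-bundle data restricted to the separating $(S^3,S^1)$, which forces the parametrized moduli space over each $\breve{G}^V$ face to be empty for generic data. Without that step, your boundary enumeration is incomplete for $n=2$, and the claimed identity does not follow. (For $n\ge 3$ the compactification acquires yet further faces that are harder to control, which is why the statement is restricted to $n\le 2$.)
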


For a fixed $(v,u)$ of type $\leq 1$ with $v \geq u$, we define a chain complex $( \bC[vu], \bF[vu] )  $ by
\[
\left( \bC[vu] :  = \bigoplus_{v \geq v' \geq u}  C_{v'} , \quad \bF[vu]  := \bigoplus _{v \geq v' \geq u' \geq u } f_{v'u'} \right) . 
\]

If we put $v=(1, \cdots, 1)$ and $u = (0, \cdots, 0)$, then we define
\[
(\CKh^\sharp(D), d^\sharp):=  ( \bC[vu],\bF[vu] ) 
\]
and call it by an {\it instanton cube complex}. 
 %We will consider instanton cobordism maps for Reidemeister moves. For this reason,

 \begin{thm}[Kronheimer--Mrokwa, \cite{KM11u}]
The homology of $(\CKh^\sharp(D), d^\sharp )$ is canonically isomorphic to $I^\sharp (L)$ for a given link $L \subset \R^3$. 
\end{thm}

\subsection{Filtrations on instanton cube complexes}

We review how to define quantum and cohomological gradings on instanton cube complexes, which are originally defined in \cite{KM14}.

Those gradings are defined for the following extension of link diagram introduced by Kronheimer--Mrowka: 
\begin{defn}
    We say that a link $L \subset \R^3$ with a collection $\{c_i\}_{i=1}^N$ of crossings is a {\it pseudo-diagram} of $L$ if, for all $v \in \{0,1\}^N$, the link $L_v$ is an unlink.  (We often denote the pair $(L,\{c_i\}_{i=1}^N)$ by $D$, and set $D_v := L_v$ for each $v \in \Z ^N$.)
\end{defn}

For a pseudo-diagram $D=(L, \{c_i\})$, we denote
$\CKh^\sharp(D) := \CKh^\sharp(L, \{c_i\})$.
If we consider a strongly admissible diagram, then the complex $ ( \bC[vu],\bF[vu] ) $ gives a spectral sequence convergent to the homology of $ ( \bC[ww],\bF[ww])$for $w=2v-u$.

Let $L$  be a link in $\R^3$ and $D$ be its 
pseudo-diagram. 
We first note that the critical point set of the orbifold Chern--Simons functional is identified with a certain $SU(2)$-representation variety, as in the case of usual instanton Floer homology: 
\[
R^\omega(L \sqcup H) =\{ \rho \in \operatorname{Hom}(\pi_1(S^3 \setminus L \sqcup H \sqcup \omega ), SU(2)) | \operatorname{Tr}(\rho (m_i))=0, \rho (l) = -\id \}/SU(2),
\]
where the $m_i$ are meridians of each component of $L \sqcup H$ and $l$ is a small meridian of an arc $\omega$ representing $w_2$ of the admissible $SO(3)$-bundle. 

Note that for a given orientation, we have an identification 
\[
R^\omega(D_v \sqcup H) \cong \overbrace{S^2\times \cdots \times S^2}^{r(D_v)}.  
\]
%where $R^\omega(D_v \sqcup H)$ denotes the $SO(3)$-representation variety of $\pi_1(D_v\sqcup H)$ with $w_2= \omega$ divided by determinant one gauge transformations. 
We take a Morse perturbation of $R^\omega(D_v \sqcup H)$ of the orbifold Chern--Simons functional so that the perturbed critical point set is identified with the Morse complex of the sum of standard Morse functions on $S^2$. 
Through the identification 
\[
C^\sharp (D_v) \cong \overbrace{H_*(S^2; \Z)\otimes \cdots \otimes H_*(S^2; \Z)}^{r(D_v)}, 
\]
we define the quantum degree $Q(\zeta)$ of $\zeta$ in $C^\sharp (D_v) $ as its absolute Morse degree. One can alternatively use excision cobordism maps to define $Q$-filtrations and see that these are filtered equivalent as mentioned in \cite{KM14}. 
For a fixed orientation of $L$, we have the corresponding unique resolution $D_o$ for $o \in \{0,1\}^N$, which is compatible with the induced orientation of $D$.
%Fix a resolution $v \in \{0,1\} ^N $.
\begin{defn}
Let $D$ be a link diagram with $N$ crossings $\{c_{1}, \cdots c_{N}\}$.
Fix a resolution $v \in \{0,1\} ^N $ of $D$.
We %first 
define the {\it instanton quantum grading} by 
\[
q (\zeta) : = Q (\zeta) - \left( \sum_{i \in \{ 1, \cdots, N\}  } v(c_i) \right) + \frac{3}{2}\sigma (v, o) - n_+ + 2n_- 
\]
for $0\neq \zeta \in C^\sharp(D_v)$, where 
\begin{itemize}
    \item[(I)]  $n_-$ and $n_+$ are the numbers of negative and positive crossings of $D$, 
    \item[(II)] $\sigma (v,u) \in \Z$ denotes 
    \[
    S_{v w} \cdot S_{vw}  - S_{uw }\cdot  S_{uw}, 
    \]
    where $\cdot$ means the self-intersection number in $[0,1]\times \R^3$ and $w \in \{ 0,1\}^N $ is any resolution such that 
    $v \geq w, u \geq w $.
\end{itemize}
\end{defn}
Note that this definition is analogous to \eqref{kh_q_gr}. 
Next, we give the definition of homological grading:
\begin{defn}
For a resolution $v \in \{0,1\} ^N $, we define the {\it instanton homological grading} $h$ as follows: 
\[
h( \zeta) :=  - \left( \sum_{i \in \{ 1, \cdots, N\}  } v(c_i) \right) + \frac{1}{2}\sigma (v, o)  + n_- 
\]
for $0\neq \zeta \in C^\sharp (D_v)$.  
For $ 0 \in C^\sharp(D_v)$, we define these gradings as $\infty$.
\end{defn}
This definition looks analogous to the cohomological grading in \eqref{kh_ho_gr}.

\begin{thm}[Kronheimer--Mrokwa, \cite{KM11u}]
    We have an identification between $E_2$-term of $(\CKh^\sharp(D), d^\sharp )$ and the Khovanov chain complex of $D^*$. 
\end{thm}

\begin{rem}
 Although Kronheimer--Mrokwa focused on the reduced version of the instanton cube complexes in the main theorem of \cite{KM11u}, the setting of Proposition 6.7, Theorem 6.8, and Theorem 8.2 in \cite{KM11u} contains the case of framed cube complexes.  
\end{rem}

\subsection{Isotopy trace maps on instanton cube complexes}\label{isotopy_trace}
In this section, we shall introduce a cobordism map for cube complexes. Originally, Kronheimer--Mrowka defined certain cobordism maps for isotopy traces in \cite[Section 5]{KM14}, but we give a slightly more general definition.

We take a link cobordism in 
\[
T\subset [0,1]\times \R^3: D \to D' 
\]
such that $D$ and $D'$ are the same pseudo diagram near each crossing and the cobordism $T$ is product on neighborhoods of the crossings. Then, we define 
\begin{align}\label{Tbordism}
([0,1] \times S^3, T_{uv} ) 
:= ([0,1] \times S^3, T ) \circ  ([0,1]\times S^3, S_{uv}). 
\end{align}
One can make a family of orbifold Riemann metrics with cylindrical ends $G^T_{uv}$ with dimension $|u-v|_1$.
We define a cobordism map 
\begin{align}\label{familycobmap}
\phi^{KM}_T: \CKh^{\sharp} (K) \to \CKh^{\sharp}(K')
\end{align}
by counting parametrized instanton moduli spaces over $G^T_{uv}$ with a certain correction of orientations. More precisely, we define 
\begin{eqnarray*}
    \phi^{KM}_{T}&:=&\sum_{1\geq u\geq v\geq 0}\phi^{KM}_{T_ {uv}}
    \end{eqnarray*}
    by setting
    
    \begin{eqnarray*}
  \phi^{KM}_{T_{uv}}:=(-1)^{s^{T}(u, v)}m^{T}_{uv}, \quad   \langle m_{uv}^{T}(\alpha), \beta \rangle:=\#M^{T}_{uv}(\alpha, \beta),
\end{eqnarray*}
where 
\[s^{T}(u, v):=\frac{1}{2}|u-v|_{1}(|u-v|_{1}-1)+\sum_{i}u_{i}-\sum_{i}v_{i}\]
and $M^{T}_{uv}(\alpha, \beta)$ is singular instanton moduli space parametrized by $G^T_{uv}$. 
See \Cref{ori_of_KMmap} for the orientations of these moduli spaces.
\begin{prop}\label{chain map eqn}
The map $\phi^{KM}_{T}$ is a chain map on cube complexes.
Moreover, if $T$ is a trace of isotopy fixing the neighborhoods of the crossings of pseudo-diagrams, then $\phi^{KM}_{T}$ is q-filtered. 
\end{prop}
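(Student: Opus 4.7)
The plan is to follow the standard Floer-theoretic strategy: for each ordered pair $(u,v)$ with $1\geq u \geq v \geq 0$ and each pair of critical points $\alpha \in \mathfrak{C}(K_u)$, $\beta \in \mathfrak{C}(K'_v)$, I would analyze the codimension-one ends of the one-dimensional compactified parametrized moduli space
\[
\overline{M}^{T}_{uv}(\alpha,\beta)_1 \;=\; \bigcup_{g \in \overline{G}^T_{u,v}} M_g(\alpha,\beta;\, T_{uv;wv})_{1-\dim G^T_{u,v}}.
\]
The signed count of its boundary is zero, and the chain-map identity $d^\sharp \circ \phi^{KM}_T = \phi^{KM}_T \circ d^\sharp$ will arise by grouping the contributions appropriately.

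The codimension-one ends are of three types. Type (i) are breakings of a Chern--Simons trajectory on the incoming end, producing configurations $\breve{M}(\alpha,\gamma) \times M^{T}_{uv}(\gamma,\beta)$ for $\gamma \in \mathfrak{C}(K_u)$; type (ii) are the symmetric breakings on the outgoing end, giving $M^{T}_{uv}(\alpha,\delta) \times \breve{M}(\delta,\beta)$ for $\delta \in \mathfrak{C}(K'_v)$. Type (iii) are the contributions from the boundary faces of the parameter family $\overline{G}^T_{u,v}$. Because $T$ is a product near each crossing, the faces of $\overline{G}^T_{u,v}$ correspond to neck-stretching along an intermediate resolution $w$ with $u\geq w\geq v$ (analogous to the corner structure of the Kronheimer--Mrowka family used in the construction of $d^\sharp$), so that type (iii) decomposes as products $M^T_{uw}(\alpha,\gamma) \times \breve{M}_{wv}(\gamma,\beta)$ and $\breve{M}_{uw}(\alpha,\gamma) \times M^T_{wv}(\gamma,\beta)$. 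Summing the resulting identity over all $(u,v)$ with weighting $(-1)^{s^T(u,v)}$ and comparing the type (iii) sum with the definition of $d^\sharp$ yields the desired chain-map equation.

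The step I expect to be the main obstacle is checking that all orientation signs match. The definition of $\phi^{KM}_{T_{uv}}$ carries the explicit sign $(-1)^{s^T(u,v)}$, which must be reconciled with (a) the standard orientation of $\overline{G}^T_{u,v}$ and its boundary faces coming from neck-stretching, (b) the sign conventions used for $d^\sharp$ in \cite{KM11u} (in particular the factor $(-1)^{\dim \breve{G}_{vu}}$ noted in the preceding remark), and (c) the gluing sign on cylinder breakings. I would carry this out following the orientation conventions to be developed in \Cref{ori app}, and verify compatibility by reducing to the case $T = [0,1]\times K$, where $\phi^{KM}_T$ must specialize to the identity.

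For the filtration statement, suppose $T$ is the trace of an isotopy fixing neighborhoods of the crossings. Then each connected component of $T$ is a product cylinder, so $\chi(T) = 0$ and the self-intersection $T\cdot T = 0$ (computed in $[0,1]\times \R^3$). Applying the order estimate of \Cref{prop:KM_map_handles} to each elementary summand $\phi^{KM}_{T_{uv}}$ then gives $\ord_q(\phi^{KM}_{T_{uv}}) \geq 0$, and taking the direct sum shows $\ord_q(\phi^{KM}_T) \geq 0$, i.e., $\phi^{KM}_T$ is $q$-filtered.
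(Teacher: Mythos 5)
Your proposal follows essentially the same strategy as the paper's proof: establish the chain-map identity by analyzing the codimension-one ends of the one-dimensional compactified parametrized moduli space, with orientation bookkeeping deferred to \Cref{ori app}, and obtain $q$-filteredness from an index estimate. In fact the main text of the paper's proof consists precisely of the sign-conversion step you anticipate as "the main obstacle": \Cref{KMT_orientation} supplies the oriented boundary relation among the unnormalized $\bar{m}$-maps, which is then shown to be equivalent, by an explicit $\bmod\ 2$ comparison of exponents, to the same relation carrying the $(-1)^{s(\cdot,\cdot)}$ and $(-1)^{s^T(\cdot,\cdot)}$ factors built into the definition of $\phi^{KM}_T$ and $d^\sharp$; the filtration part is simply cited from \cite{KM14}.

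Two details in your write-up need correction. First, \Cref{prop:KM_map_handles} is a statement about cobordism maps induced by diagrammatic handle attachments; it does not apply to the elementary components $\phi^{KM}_{T_{uv}}$ of an isotopy-trace map. The appropriate input for the filtration bound is the general parametrized index estimate (\cite[Lemma 2.1]{KM14}, whose excision analogue appears in this paper as \Cref{quantum ex}), together with $\chi(T)=T\cdot T=0$ and the change-of-resolution contribution from the factor $S_{uv}$ in $T_{uv;wv}$; the paper defers this computation to \cite[Proposition 5.1]{KM14}. Second, your proposed sanity check of reducing to $T=[0,1]\times K$ would not pin down the orientation signs as claimed: for a trivial isotopy trace, $\phi^{KM}_T$ is only chain homotopic (not literally equal) to the identity, since each family $G^T_{uv}$ with $u>v$ still has positive dimension and the off-diagonal components of $\phi^{KM}_T$ need not vanish.
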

\begin{proof}
%--------------------------------------------------------------

The relation of the chain map is given by counting the oriented boundary of the 1-dimensional moduli space
$M^{T}_{uv}(\alpha, \beta)_{1}$.
\Cref{KMT_orientation} in Appendix gives the comparison of induced orientation on each boundary face of $M^{T}_{uv}(\alpha, \beta)_{1}$.
Hence, we have
        \begin{equation}\label{eqn3}
        \sum_{u > u'} (-1)^{(|u-u'|_{1}+1)(|u'-v|_{1}-1)} m^{T}_{u'v}\circ \bar{m}_{uu'} 
        - \sum_{v' > v} (-1)^{|u-v'|_{1}|v'-v|_{1}+|v'-v|_{1}-1} \bar{m}_{v'v} \circ m^{T}_{uv'} = 0.
\end{equation}
This equation is equivalent to
\begin{equation}\label{eqn4}
    \sum_{u > u'} (-1)^{s(u, u')+s^{T}(u', v)} {m^{T}_{u'v}} \circ \bar{m}_{uu'} 
        - \sum_{v' > v} (-1)^{s^{T}(u, v')+s(v', v)} \bar{m}_{v'v} \circ m^{T}_{uv'} = 0.
\end{equation}
To see this, for a positive integer $N$ and 
\[
u , v, u', u'', v, v'' \in \{ 0,1\}^N, 
\]
we put
\begin{eqnarray*}
    \delta_{1}(u', u'')&:=&(|u-u'|_{1}+1)(|u'-v|_{1}-1)-(|u-u''|_{1}+1)(|u''-v|_{1}-1),\\
    \delta_{2}(u', v')&:=&(|u-u'|_{1}+1)(|u'-v|_{1}-1)-|u-v'|_{1}|v'-v|_{1}-|v'-v|_{1}+1,\\
    \delta_{3}(v', v'')&:=&|u-v'|_{1}|v'-v|_{1}+|v'-v|_{1}-|u-v''|_{1}|v''-v|_{1}-|v''-v|_{1}.
\end{eqnarray*}
We also put
\begin{eqnarray*}
    \epsilon_{1}(u', u'')&:=&s(u, u')+s^{T}(u', v)-s(u, u'')-s^{T}(u'', v),\\
    \epsilon_{2}(u', v')&:=&s(u, u')+s^{T}(u', v)-s^{T}(u, u')-s(u', v),\\
    \epsilon_{3}(v', v'')&:=&s^{T}(u, v')+s(v', v)-s^{T}(v, v'')-s(v'',v). 
\end{eqnarray*}
Then we have:
\begin{claim}
Fix $u \geq v \in \{0,1\}^N$.  For 
\[
u', u'', v', v'' \in \{0,1\}^N , 
\]
we have 
\begin{align*}
\begin{cases}
&\delta_{1}(u', u'')\equiv_{(2)}\epsilon_{1}(u', u'') \text{ if } u \geq u'\geq  u''\geq v \\
&\delta_{2}(u', v')\equiv_{(2)}\epsilon_{2}(u', v') \text{ if }
u \geq u'\geq  v'\geq v \\
&\delta_{3}(v', v'')\equiv_{(3)}\epsilon_{3}(v', v'') \text{ if } u \geq v'\geq  v''\geq v. 
\end{cases}
\end{align*}
\end{claim}
These imply that (\ref{eqn3}) and (\ref{eqn4}) are equivalent.
Hence, we obtain the relation:
\[\phi^{KM}_{T}\circ d^{\sharp}-d'^{\sharp}\circ \phi^{KM}_{T}=0.\]
The latter part of the statement is proven in \cite{KM14}. 
\end{proof}

This is proven by elemental computation, but we write a sketch of proof for readers. 
\begin{proof}[Proof of the claim]
Let $u\ge v\in\{0,1\}^N$ and let $w\in\{u',u'',v',v''\}$ satisfy $u\ge w\ge v$ coordinatewise. 
For each coordinate we have $(u_i,v_i)\in\{(0,0),(1,1),(1,0)\}$, and on coordinates with $(u_i,v_i)=(1,0)$
\[
|u_i-w_i|+|w_i-v_i|=1.
\]
Summing over all coordinates gives the fundamental identity
\begin{equation}\label{eq:dist}
|u-w|_1+|w-v|_1 = |u-v|_1 =: n.
\end{equation}
Let
\[
a=|u-u'|_1,\ b=|u'-v|_1,\ c=|u-u''|_1,\ d=|u''-v|_1,
\]
then \eqref{eq:dist} implies
\begin{equation}\label{eq:ab}
a+b = n = c+d.
\end{equation}

A direct expansion of $\epsilon_1$ using the definitions of $s$ and $s^T$ gives
\[
\epsilon_1(u',u'') \equiv \frac12\bigl(a(a-1)+b(b-1)-c(c-1)-d(d-1)\bigr)\pmod 2.
\]
On the other hand,
\[
\delta_1(u',u'') = (a-1)(b-1)-(c-1)(d-1)=ab-cd-a-b+c+d,
\]
which equals $ab-cd$ by \eqref{eq:ab}.  Consider
\[
D:=a(a-1)+b(b-1)-c(c-1)-d(d-1)-2(ab-cd),
\]
which simplifies via \eqref{eq:ab} to $D=4(a-c)(a+c-n)$. Hence $D\equiv 0\pmod 4$ and
\[
\epsilon_1(u',u'')\equiv \delta_1(u',u'')\pmod 2.
\]

The proofs of $\delta_2\equiv\epsilon_2$ and $\delta_3\equiv\epsilon_3$ are identical polynomial manipulations:
set
\[
|u-u'|_1=a,\ |u'-v|_1=b,\ |u-v'|_1=e,\ |v'-v|_1=f
\]
or
\[
|u-v'|_1=e,\ |v'-v|_1=f,\ |u-v''|_1=g,\ |v''-v|_1=h,
\]
and use $a+b=n=e+f=g+h$ from \eqref{eq:dist}. In each case
\[
2(\epsilon_i-\delta_i) \in 4\Z,
\]
so $\epsilon_i\equiv \delta_i\pmod 2$.
\end{proof}
%These imply that (\ref{eqn3}) and (\ref{eqn4}) are equivalent.
%Hence, we obtain the relation:
%\[\phi^{KM}_{T}\circ d_{CKh^{\sharp}}-d'_{CKh^{\sharp}}\circ \phi^{KM}_{T}=0.\]
%The latter part is proven in \cite{KM14}. 

For the trace of isotopy between $K$ and $K'$, the chain map $\phi^{KM}_{T}$ gives a $q$-filtered chain homotopy equivalence between the associated cube complexes. See \cite[Proposition 5.1]{KM14} for the detail.
Note that $\phi^{KM}_T$ combined with certain dropping crossing maps, which will be explained in the next section, has been used to get maps between cube complexes corresponding to Reidemeister moves. 

\subsection{Kronheimer--Mrowka's maps on instanton cube complexes}

Since our main result involves Kronheimer--Mrowka's cobordism maps $\phi^{KM}_S$, we shall briefly review their constructions.

First, we have maps called {\it dropping/adding crossings}. 
Let $(D,N)$ be a pseudo diagram, where $N$ describes the set of crossings.  Pick a crossing $c \in N$ such that $(D, N':= N \setminus \{c\})$ is still a pseudo diagram. Then, we have a decomposition 
\[
CKh^\sharp_i(D, N) := \bigoplus_{\substack{u \in \{0,1\}^{N}\\ u(c) = i }} C^\sharp (D_u).   
\]
Then, the {\it dropping map }
\[\Phi^{\sharp}:\CKh_{1}^\sharp(D, N)\oplus \CKh_{0}^\sharp(D, N)\rightarrow \CKh^\sharp(D, N')\] is defined by
$\Phi^\sharp =\left[{F}_{1,-1}, {F}_{0,-1}\right]$
where ${F}_{ij}$ are the components of the differential on the cube complex of $(D,N)$ induced from a standard cobordism $S_{uw}$ such that $u(c)=i$ and $w(c)=j$. 
In other words, these maps are written as: 
\begin{align*}
  &  {F}_{ij} = \sum_{ \substack{v \in \{0,1\}^N,  v(c)=i \\ u \in \{0,1\}^N, v \geq u,  u(c)=j } } f_{vu} 
\end{align*}

If we further suppose the pair $(N,N')$ is {\it admissible}, this map $\Phi^{\sharp}$ is confirmed to be a filtered chain homotopy equivalence with respect to quantum gradings. 
We also have the opposite direction of the map:
\[
\Psi^\sharp : \CKh^\sharp(D, N') \to \CKh_{1}^\sharp(D, N)\oplus \CKh_{0}^\sharp(D, N)
\]
defined in a similar way, which is called the {\it adding map}. 

If we take one more crossing $c' \in N$ and put $N'':= N \setminus \{c, c'\}$ with certain conditions, we have similar quantum filtration preserving maps 
\begin{align*} \CKh^\sharp(D, N) \xrightarrow{\Psi^\sharp} \CKh^\sharp(D, N') \xrightarrow{\Psi^\sharp} \CKh^\sharp(D, N'')  \\ 
\CKh^\sharp(D, N'') \xrightarrow{\Phi^\sharp} \CKh^\sharp(D, N')  \xrightarrow{\Phi^\sharp} \CKh^\sharp(D, N)
\end{align*}
such that each $\Psi^\sharp$ and $\Phi^\sharp$ are inverse up to homotopy each other.

Now, we explain the cobordism maps for Reidemeister moves given in \cite[Proposition 8.1]{KM14}. 
Let $S: D\to D'$ be one of the Reidemeister RI.  
We first apply the map associated to the trace of isotopy $T$ and compose it with the adding map: 
\[
\phi^{KM}_S := \Psi^\sharp \circ \phi^{KM}_T : \CKh^\sharp (D)  \to \CKh^\sharp (D'). 
\]
For $RI^{-1}$, we use the dropping map combined with the isotopy map. The maps of the other moves are similar, see \cite[Figure 2]{KM14} for RIII.  

Next, we consider Morse moves. Suppose $S$ is a 0-handle attachment from $D$ to $D'= D \sqcup U_1$. This gives an explicit disk $S$ cobordism from $D$ to $D'$. We have associate map \eqref{familycobmap} for $S$. This is the definition of $0$-handle map. The $ 2$-handle case is similar. 
The 1-handle case is described as follows: 
 $S$ is a 1-handle cobordism from $D$ to $D'$. We add the crossing $c$ on the diagram $D$ written by $D''$, and regard the $1$-handle attach operation as a cobordism map induced from the change of  $1$-resolution $D$ to $0$-resolution $D'$ for the crossing $c$.
Then we have the standard link cobordism $S_{uw}$ such that $|u-w_{1}|_{\infty}=1$, $u(c)=1$, and $ w_{1}(c)=0 $ for a specified crossing $c$.
This induces a chain map 
\[
\phi^{KM}_S : \CKh^\sharp (D''_1=D) \to \CKh^\sharp (D''_0= D') 
\]
as a component of the differential of $\CKh^\sharp (D'')$.

\section{Excision cobordism map}
\label{Excision cobordism map}
In this section, we shall construct the {\it excision map}
\[
\Psi: \CKh^\sharp (D_1) \otimes \CKh^\sharp (D_2) \to \CKh^\sharp (D_1\sqcup D_2)
\]
for pseudo diagrams $D_1$ and $D_2$ of given links. Originally, excision cobordism maps were introduced by Floer and further discussed in \cite{BD95} due to Braam and Donaldson. 
Also, it was used to prove well-definedness of sutured instanton homology \cite{KM10} and to compute the $E_2$-term of the spectral sequence in \cite{KM11u}. Our construction is based on Kronheimer--Mrowka's cobordism map argument in \cite{KM10}.

Let $K_1$ and $K_2$ be links in $\R^3$. 
Let us regard $S^3$ as the compactification $\R^3 \cup \{ \infty \} $.
When we consider the framed instanton homology, we further put the Hopf links denoted by $H$ on a small neighborhood of $\infty$ in $S^3$.
To form the excision map $\Psi$, we first construct the {\it excision cobordism} 
\[
(W, S) :  (S^3, K_1\sqcup H )  \sqcup (S^3, K_2\sqcup H ) \to (S^3, H) \sqcup (S^3, K_1\sqcup K_2\sqcup H ) 
\]
described as follows: 
Fix a label of $H$ as $H= H^+_i \cup H^-_i$,
and let $V^+_1$ (resp.\ $V^+_2$) be a tubular neighborhood of $H^+_1$ (resp.\ $H^+_2$)
in $S^3$, which are again contained in a small neighborhood of $\infty \in S^3$.
Then $Y'_i$ is obtained from $S^3$
by cutting open along $T_i := \partial V^+_i$ ($i=1,2$).
Note that each $Y'_i$ is regarded as the disjoint union 
 of $V^+_i$ and $V^-_i := S^3 \setminus \Int (V^+_i)$,
 where $V^-_i$ is a solid torus with center line $H^-_i$.
Moreover,  there is a 3-ball $B_i$ in $V^-_i\setminus H^-$ containing $K_i$ ($i=1,2$).
Next, we consider a diffeomorphism $h \colon T_1 \to T_2$ interchanging longitudes %with  
and meridians,
and let $U$ be the 2-dimensional manifold with corners shown in \Cref{fig:excision1}. 
The boundary of $U$ is decomposed into the lower horizontal arcs $l_1, l_2$, the upper horizontal arcs $u^+$, $u^-$, and the vertical arcs $s^\pm_1, s^\pm_2$. 
Take a Morse function $f \colon U \to [0,1]$ with a single index 1 critical point so that $f^{-1}(0)= l_1 \cup l_2$, $f^{-1}(1) = u^+ \cup u^-$ and the restriction on each $s^\pm_i$ is bijective.
Then, we define 
\[
W := \left(U \times T_1 \right) 
\cup \left( [0,1] \times Y'_1 \right)
\cup  \left( [0,1] \times Y'_2 \right)
\]
with the following gluing properties:
for each $\varepsilon \in \{\pm 1\}$,
\begin{itemize}
    \item $s^{\varepsilon}_1 \times T_1$ is glued with $[0,1] \times \partial V^{\varepsilon}_1$ by $f \times \id$, and
    \item $s^{\varepsilon}_2 \times T_1$ is glued with $[0,1] \times \partial V^{\varepsilon}_2$ by $f \times h$.
\end{itemize}
It follows from the construction that the domain of the cobordism $W$ is
\[
(V_1^+ \cup V_1^-) \sqcup (V_2^+ \cup V_2^-) = S^3 \sqcup S^3
\]
and the codomain is 
\[
(V_1^+ \cup V_2^+) \sqcup (V_1^- \cup V_2^-) = S^3 \sqcup S^3.
\]

\begin{figure}[t]
    \centering
    \includegraphics[width=0.4\linewidth]{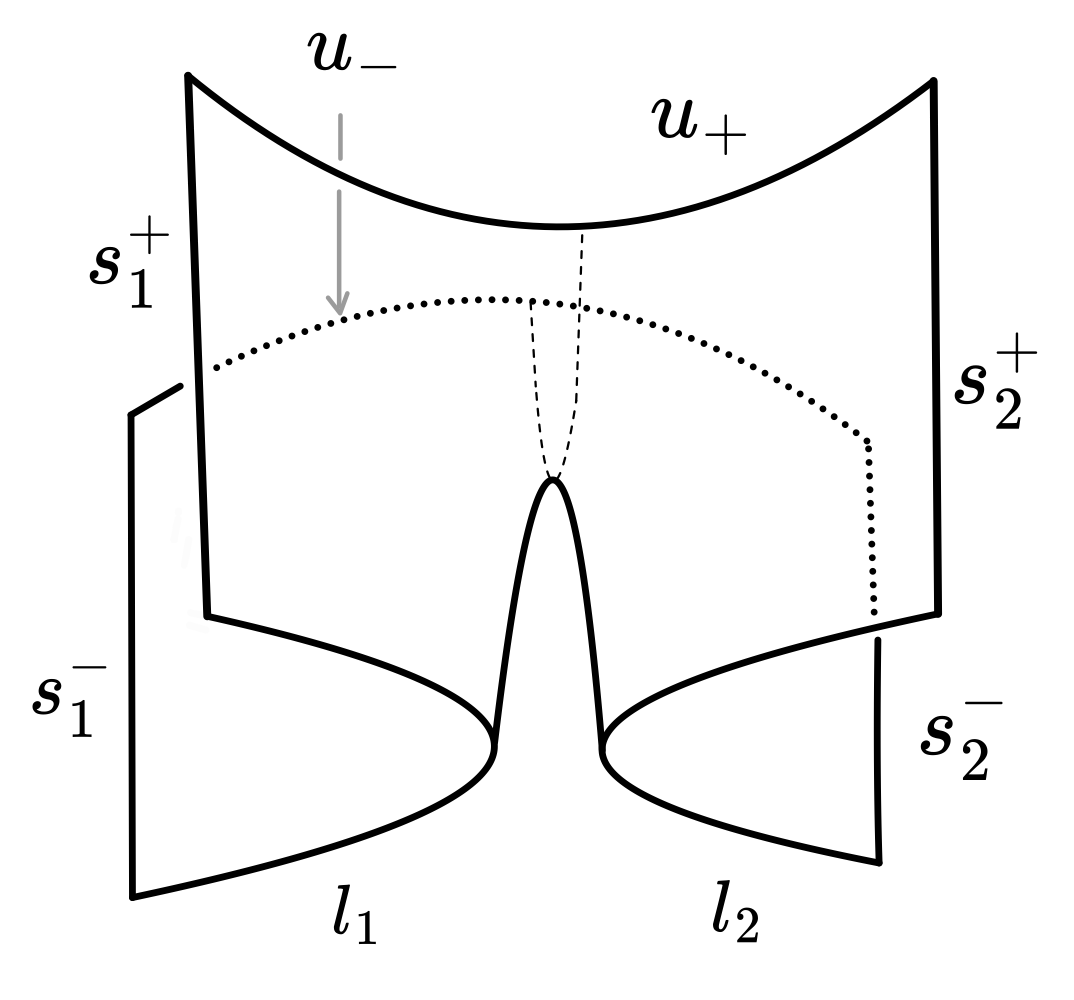}
    \caption{}
    \label{fig:excision1}
\end{figure}

One can take a Morse function $W \to [0,1]$ 
as a natural extension of the product of $f$ with a standard Morse function on $T_1 \cong S^1 \times S^1$, which induces a handle decomposition of $W$ consisting of 4 handles, whose indices are $1$, $2$, $2$ and $3$ respectively. (We denote the Morse function on $W$ by $f$ again.)

Finally, we define a link cobordism $S$ by
\[
S:= [0,1] \times (K_1\sqcup H \sqcup K_2 \sqcup H) 
\subset \left( [0,1] \times Y'_1 \right)
\cup  \left( [0,1] \times Y'_2 \right)
\subset W.
\]
We write $Y_1' \sqcup Y_2'$
%by
as
$Y_{\sqcup}$ and record this embedding $Y_{\sqcup} \hookrightarrow W$.
By the construction of $W$, we see that the domain of $(W,S)$ is
\[
\left(V^+_1 \cup V^-_1, H^+_1 \sqcup (K_1 \sqcup H^-_1) \right) 
\sqcup 
\left(V^+_2 \cup V^-_2, H^+_2 \sqcup (K_1 \sqcup H^-_2) \right) 
\]
and the codomain is
\[
\left(V^+_1 \cup V^+_2, H^+_1 \sqcup H^+_2 \right) 
\sqcup 
\left(V^-_1 \cup V^-_2, (K_1 \sqcup H^-_1) \sqcup (K_2 \sqcup H^-_2) \right).
\]
We consider the admissible bundles on $(S^3, K_1\sqcup H )$, $(S^3, K_2\sqcup H )$, $(S^3, H)$, and $(S^3, K_1\sqcup K_2\sqcup H ) $ as $SO(3)$-bundles whose Stiefel--Whitney class are arcs connecting the two components of the Hopf link. This bundle on boundary naturally has an extension to an $SO(3)$-bundle on $(W, S)$.

\begin{lem}\label{topological computation of excision}
    One can see 
    \[
    W \cong \left([0,1]\times S^3 \right) \# \left([0,1]\times S^3 \right) \# \left(S^2 \times S^2 \right). \]
    In particular, $\chi(W)=\sigma(W)=0$.
    Moreover, we see $\chi(S)=0$ and $S \cdot S =0$.
\end{lem}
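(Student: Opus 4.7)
The plan is to extract all four claims from the explicit handle decomposition of $W$ supplied by the Morse function $f$, together with direct inspection of the surface $S$.

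\emph{Step 1: diffeomorphism type of $W$.} The Morse function $f$ gives a handle decomposition of $W$ relative to $\partial_- W = S^3 \sqcup S^3$ with exactly four handles, of indices $1, 2, 2, 3$. I would rearrange the critical values so that the $1$-handle occurs first, both $2$-handles next, and the $3$-handle last, producing a decomposition $W = W_1 \cup_{S^3} W_2 \cup_{S^3} W_3$ in which:
\begin{itemize}
\item $W_1 \colon S^3 \sqcup S^3 \to S^3$ is the $1$-handle cobordism, which is naturally the boundary-connected sum $([0,1]\times S^3) \,\natural\, ([0,1]\times S^3)$ joined along a $3$-ball in the top level;
\item $W_2 \colon S^3 \to S^3$ is built by attaching two $2$-handles; tracking the attaching circles through the cut-and-paste construction, they form a Hopf link in $S^3$ with framings $(0,0)$. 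Capping off by two $B^4$'s gives $S^2 \times S^2$, so $W_2 \cong (S^2 \times S^2) \setminus (B^4 \sqcup B^4) \cong ([0,1]\times S^3) \,\#\, (S^2 \times S^2)$;
\item $W_3 \colon S^3 \to S^3 \sqcup S^3$ is dual to $W_1$ and is thus again a boundary-connected sum of two copies of $[0,1]\times S^3$.
\end{itemize}
Concatenating, $W \cong ([0,1]\times S^3) \# ([0,1]\times S^3) \# (S^2 \times S^2)$, as required.

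\emph{Step 2: $\chi(W) = \sigma(W) = 0$.} Granted Step $1$, additivity of Euler characteristic and signature under connected sum gives
\[
\chi(W) = 0 + 0 + 4 - 4 = 0, \qquad \sigma(W) = 0 + 0 + 0 = 0.
\]
Alternatively, $\chi(W) = 0$ can be read off directly from the handle count $-1 + 1 + 1 - 1$, and $\sigma(W)=0$ from Novikov additivity applied to the pieces $U \times T_1$, $[0,1]\times Y_1'$, $[0,1]\times Y_2'$, each of which has vanishing signature.

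\emph{Step 3: $\chi(S) = 0$.} The surface $S = [0,1] \times (K_1 \sqcup H \sqcup K_2 \sqcup H)$ is a disjoint union of annuli $[0,1] \times S^1$, each having Euler characteristic zero.

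\emph{Step 4: $S \cdot S = 0$.} The surface $S$ lies entirely inside the product regions $[0,1]\times Y_1'$ and $[0,1]\times Y_2'$, in each of which it has the form $[0,1]\times L$ for a $1$-submanifold $L$ of a $3$-manifold. Such a product has trivial normal bundle in $W$, so its Euler number $S \cdot S$ vanishes.

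The main obstacle is Step $1$: identifying the two $2$-handle attachments as a $0$-framed Hopf link requires tracking how the gluing tori $T_1$ and $T_2$ interact with the Morse flow of $f$, together with the longitude/meridian swap encoded in $h$. The key geometric point is that this swap is precisely what produces the $S^2 \times S^2$ summand; it is the familiar cut-and-paste along a torus that changes a product cobordism by an $S^2 \times S^2$ factor. Steps $2$--$4$ are then routine.
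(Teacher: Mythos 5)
Your Steps 1--3 track the paper's reasoning closely: the paper likewise produces the connected-sum decomposition from a handle decomposition of $W$ relative to $\partial_- W$ (it uses explicit Kirby diagrams on the two-plane picture, where you argue more abstractly that the two $2$-handles form a $0$-framed Hopf link, but the content is the same), and it derives $\chi(W) = \sigma(W) = 0$ from the decomposition exactly as you do. $\chi(S) = 0$ because $S$ is a union of annuli is also identical.

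Step 4, however, has a genuine gap. You assert that $S\cdot S = 0$ because each product annulus $[0,1]\times L$ has trivial normal bundle. This conflates two things. First, $S$ is a surface \emph{with boundary} in a cobordism, so $S\cdot S$ in the sense of \cite{KM14} is a relative self-intersection number computed with respect to the $0$-framings (Seifert framings) of the boundary links in $S^3\times\{0\}$ and $S^3\times\{1\}$; trivial normal bundle does not by itself force that relative number to vanish, because the product framing on $[0,1]\times L$ need not agree with the $0$-framing at each end. Second, and more decisively, $S$ is disconnected, so $S\cdot S$ contains cross-terms $S_i\cdot S_j$ between distinct annulus components, which are not read off from the normal bundle of $S$ at all; they are measured by differences of linking numbers of $\partial S_i$ and $\partial S_j$ at the two ends. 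These cross-terms are individually nonzero here: at $t=0$ the pairs $(H^+_i, H^-_i)$ link once, while at $t=1$ the pairs $(H^{\varepsilon}_1, H^{\varepsilon}_2)$ link once, so each contributes $\pm 1$. The paper's proof is exactly the computation
\[
S\cdot S = \sum \Bigl(\lk(\{0\}\times H^{\varepsilon_1}_{i_1}, \{0\}\times H^{\varepsilon_2}_{i_2}) - \lk(\{1\}\times H^{\varepsilon_1}_{i_1}, \{1\}\times H^{\varepsilon_2}_{i_2})\Bigr) = 0,
\]
showing that the nonzero contributions from $t=0$ and $t=1$ cancel. Your argument skips this cancellation entirely, so Step 4 needs to be replaced by the linking-number bookkeeping. (The annuli attached to $K_1$ and $K_2$ contribute nothing precisely because they are contained in the $3$-balls $B_i$, which the paper also notes; but that observation alone doesn't cover the Hopf-link cylinders.)
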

\begin{proof}
For a handle decomposition derived from $f$, 
we can draw a Kirby diagram of $W$ on the disjoint union of two planes, shown in the leftmost of \Cref{fig:excision2}.
Here, the upper and lower cuboids in the figure are the attaching region of the single 1-handle, whose boundaries are identified by the reflection with respect to the green rectangle drawn in the figure. This implies that all three diagrams in \Cref{fig:excision2}
represent diffeomorphic 4-manifolds. Moreover, it is easy to see that  the rightmost diagram represents 
$\left([0,1]\times S^3 \right) \# \left([0,1]\times S^3 \right) \# \left(S^2 \times S^2 \right)$.
(Note that this handle decomposition has a single 3-handle, and the codomain of $W$ is $S^3 \sqcup S^3$.)

Next, let us compute $\chi(S)$ and $S \cdot S$.
Since $S$ is finitely many copies of $[0,1] \times S^1$, the equality $\chi(S)=0$ holds. For the value of $S \cdot S$,
it follows from the existence of the 3-balls $B_i$
that 
\begin{align*}
S \cdot S &= \sum_{\varepsilon_j\in\{+,-\}} 
\sum_{i_j=1,2} ([0,1] \times H^{\varepsilon_1}_{i_1} )\cdot ([0,1] \times H^{\varepsilon_2}_{i_2} )
\\[2mm] 
&= \sum_{\varepsilon_j\in\{+,-\}} 
\sum_{i_j=1,2} \big( \lk(\{0\}\times H^{\varepsilon_1}_{i_1}, \{0\} \times H^{\varepsilon_2}_{i_2})
-
\lk(\{1\}\times H^{\varepsilon_1}_{i_1}, \{1\} \times H^{\varepsilon_2}_{i_2})\big).
\end{align*}
Here,  we see 
\[
\lk(\{0\}\times H^{\varepsilon_1}_{i_1}, \{0\} \times H^{\varepsilon_2}_{i_2})
= 
\begin{cases}
+1 & (i_1 = i_2, \ \varepsilon_1 \neq \varepsilon_2)\\
0 & (\text{otherwise})
\end{cases}
\]
and
\[
\lk(\{1\}\times H^{\varepsilon_1}_{i_1}, \{1\} \times H^{\varepsilon_2}_{i_2})
= 
\begin{cases}
+1 & (i_1 \neq i_2, \ \varepsilon_1 = \varepsilon_2)\\
0 & (\text{otherwise})
\end{cases}.
\]
These imply that $S \cdot S = 0$.
\end{proof}

\begin{figure}[t]
    \centering
    \includegraphics[width=0.5\linewidth]{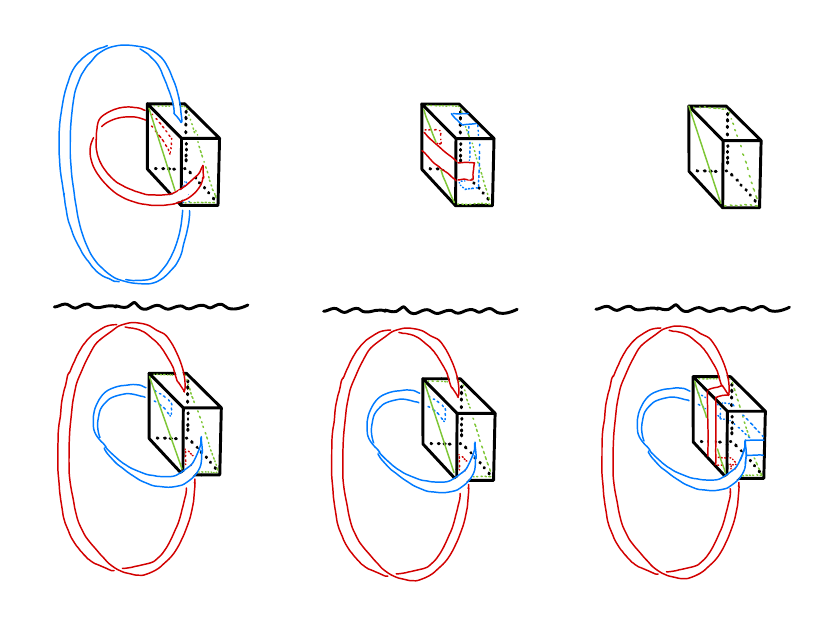}
    \caption{}
    \label{fig:excision2}
\end{figure}

In the latter sections, we fix this Morse function $f$ on $W$.
Note that $W$ naturally contains a link cobordism $S$ from $(K_1 \sqcup H)  \sqcup (K_2 \sqcup H)$ to $(K_1 \sqcup K_2 \sqcup H) \sqcup H$.
We have a product neighborhood 
\begin{align}\label{prod nbd}
 K_1 \times [0,1] \sqcup K_2 \times [0,1] \subset D^3_+ \times [0,1] \sqcup D^3_- \times [0,1 ] \subset W
\end{align}
which is a neighborhood of components of $S$ with $\partial = K_1, K_2$. Here $[0,1] \subset D^3_+ \times [0,1] \sqcup D^3_-$ are the tubles %described 
depicted by red in \cref{Fig: excision}.  

Note that the Morse function $f$ on $D^3_+ \times [0,1] \sqcup D^3_- \times [0,1 ]$ is just a projection to the second component.

\begin{figure}\label{Fig: excision}
\begin{center}
\includegraphics[width=0.8\textwidth]{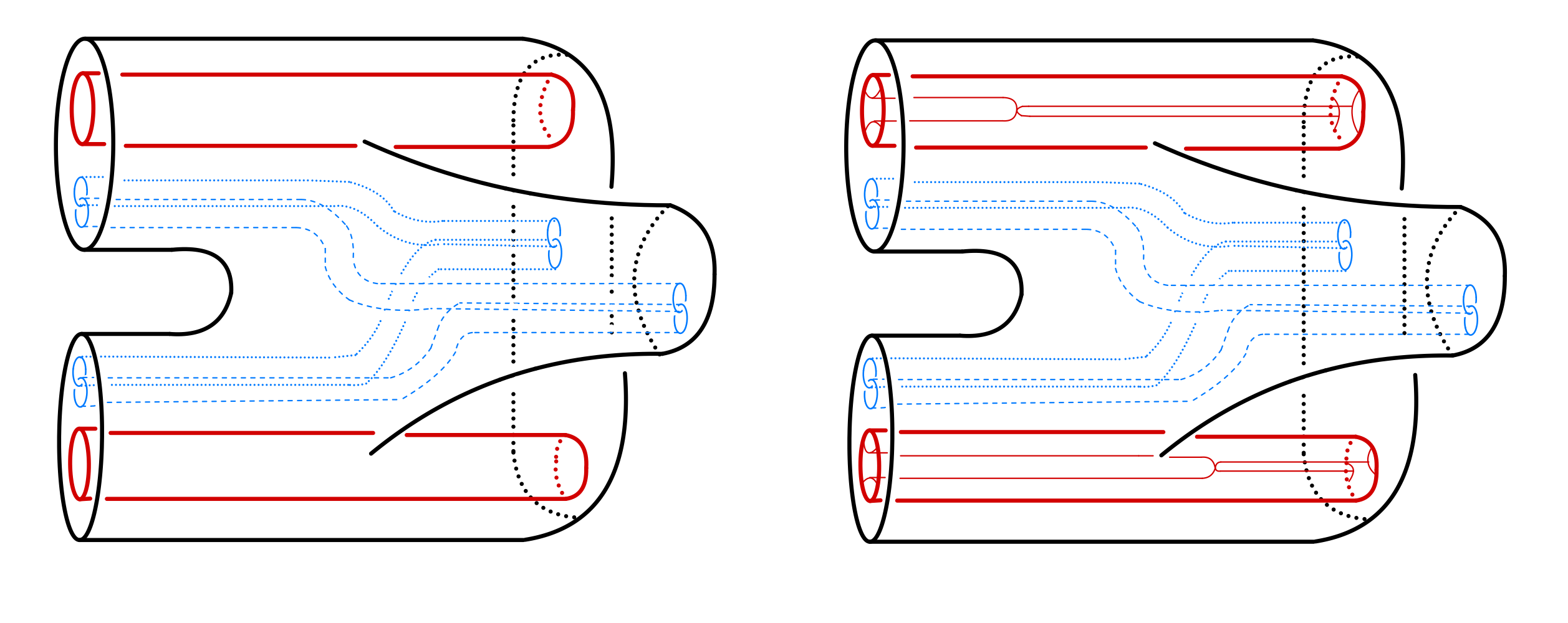}
\end{center}
 \caption{Excision cobordism}
\end{figure}

Next, we will construct a cobordism $(W, S_{uv;w})$ involving a change of resolutions.
We fix an orbifold Riemannian metric $\check{g}_{W}$ on the cobordism $(W, S)$, which has a product form near embedded surfaces $[0, 1]\times (K_1\sqcup K_2)$ inside $W$.
Let $D_1$ (resp. $D_2$) be a link diagram of $K_1$ (resp. $K_2$) equipped with $N_1$ (resp. $N_2$ ) crossings.
For resolutions $u\in \{0, 1\}^{N_1}$, $v\in \{0, 1\}^{N_{2}}$ and $w\in \{0, 1\}^{N_1+N_2}$, we define an excision cobordism between resolved link diagrams:
\[(W, S_{uv;w}): (S^3, (D_1)_{u}\sqcup H_{\omega})\sqcup (S^3, (D_2)_{v}\sqcup H_{\omega})\rightarrow (S^3, (D_1\sqcup D_2)_{w}\sqcup H_{\omega})\sqcup (S^3, H_{\omega}). \]
Let $u\in \Z^{N_{1}}$, $v\in \Z^{N_{2}}$ and $w\in \Z^{N_{1}+N_{2}}$ be resolutions of pseudo-diagrams $D_1$, $D_2$, and $D_1 \sqcup D_2$ so that the resolutions are the unlinks. 
Suppose $uv \geq w$.
Note that a resolution $w$ for the diagram $D_1\sqcup D_2$ can be written $w=(w_1, w_2 )$ using two resolutions $w_1$ and $w_2$ for diagrams $D_1$ and $D_2$ respectively.
We consider the cobordism: 
\[(W, S_{uv;w}): (S^3, (D_1)_{u}\sqcup H_{\omega})\sqcup (S^3, (D_2)_{v}\sqcup H_{\omega})\rightarrow (S^3, (D_1\sqcup D_2)_{w}\sqcup H_{\omega})\sqcup (S^3, H_{\omega}).
\]
obtained by inserting $S_{uw_1}$ and $S_{vw_2 }$ in \eqref{prod nbd}, i.e. 
\begin{align*}
S_{uv;w} \cap \left([0,1] \times D^3_+ \sqcup [0,1] \times D^3_- \right) & = S_{uw_1}\sqcup S_{vw_2 } \subset [0,1] \times D^3_+ \sqcup [0,1] \times D^3_-  \\ 
S_{uv;w} \cap ([0,1] \times D^3_+ \sqcup [0,1] \times D^3_-)^c & = S \cap ([0,1] \times D^3_+ \sqcup [0,1] \times D^3_-)^c. 
\end{align*}
We will simply denote this cobordism by $S_{uv;w}$, and $\bar{S}_{uv;w}$ denotes the non-compact cobordism obtained by attaching a half cylinder on each boundary component. We also extend the Morse function $f$ to the non-compact manifold natural way. We also define $\bar{W}$ in a similar way so that $\bar{S}_{uv;w}\subset \bar{W}$.

For convenience, we introduce the following notations: 
For $u\in \Z^{N_1}$, $v\in\Z^N_{2}$, and $w=(w_{1}, w_{2})\in \Z^{N_{1}}\times \Z^{N_{2}}$, we define 
\begin{eqnarray*}
     |uv-w|_{1}&:=&|u-w_{1}|_{1}+|v-w_{2}|_{1} \\
     |uv-w|_{\infty}&:=&\textrm{max}\{|u-w_{1}|_{\infty}, |v-w_{2}|_{\infty}\}
\end{eqnarray*}

For two link resolutions $u\in \Z^{N_{1}}$ and $v\in \Z^{N_{2}}$, we 
%regard 
write $uv$ as for the integral lattice point $(u, v)\in \Z^{N_{1}}\times \Z^{N_{2}}$.
%Then, alternatively, as norms on the product integral lattice $\Z^{N_{1}}\times \Z^{N_{2}}$, $|uv-w|_{1}$ and $|uv-w|_{\infty}$ make sense.
Then, $|uv-w|_{1}$ and $|uv-w|_{\infty}$ define norms on the product integral lattice $\Z^{N_{1}}\times \Z^{N_{2}}$, $|uv-w|_{1}$ and $|uv-w|_{\infty}$.
\begin{defn}
    For the cobordism $(W, S_{uv;w})$ constructed as above, we say $(W, S_{uv;w})$ is of {\it type $n$} if 
    \[|uv-w|_{\infty}=n.\]
\end{defn}
We define the space of orbifold metrics $G_{uv;w}$ associated with the excision cobordism $S_{uv;w}$ with resolutions.
For standard cobordism $S_{uw}$ in a cylinder, recall that Kronheimer and Mrowka introduced an associated family of orbifold metrics ${G}_{uw}$ in \cite[Section 3.9 and Section 6]{KM11u} for the case
\begin{itemize}
    \item[(a)] arbitrary pair of resolutions  $u\geq w$ with $|u-w|_{\infty}\leq 1$,
    \item[(b)] pair of resolutions $u>w$ of $|u-w|_{\infty}$ equals to $2$ or $3$, and $|u(c_{i})-w(c_{i})|\leq 1$ for all but one crossing.
\end{itemize}
The explicit construction of these families of metrics depends on the choice of the initial metric, however, we assume that these families of metrics are explicitly fixed for any resolutions.
Recall that we have an embedding $[0,1]\times Y_{\sqcup} \hookrightarrow W$ which naturally extends to 
\[
\R \times Y_{\sqcup } \hookrightarrow \overline{W}. 
\]
Consider that the resolutions satisfy $u_{i}>w_{i}$ or $v_{i}>w_{i}$ for some $i$, and fix a tube 
\[
\mathbb{R}\times B_{i} \subset \R \times Y_{\sqcup }
\]
where $B_{i}$ is a small $3$-ball associated to the crossing $c_{i}$ of $D$ for each $ i$. 
For convenience, assume that $u_i >w_i$ holds.
Any orbifold metric $\check{g}_{uw_{1}}(\tau)\in G_{uw_{1}}$ has the property that it is $\mathbb{R}$-translation invariant over a neighborhood of $\mathbb{R}\times \partial B_{i} \subset \R \times Y_{\sqcup }$.
Moreover, every orbifold metric contained in $G_{uw_{1}}$ are isometric on the neighborhood of 
$\mathbb{R}\times \partial B_{i} \subset \R \times Y_{\sqcup } $.
A similar property holds for any metrics in $G_{vw_{2}}$.
Hence, as an initial orbifold metric on $(\bar{W}, \bar{S}_{uv;w})$, we can fix an metric $\check{g}_{uv;w}(0)$ which is isometric to both of initial metrics $\check{g}_{uw_{1}}(0)\in G_{uw_{1}}$ or $\check{g}_{vw_{2}}(0)\in G_{vw_{2}}$ on each tube $\mathbb{R}\times \partial B_{i}$.
Moreover, we assume that the initial orbifold metric $g_{uv;w}(0)$ is isometric to the standard orbifold metric of the cylinder on each neighborhood of the boundary of $(W, S_{uv;w})$.
From the construction, any pair of metrics $(g_{uw_{1}}, g_{vw_{2}})\in G_{uw_{1}}\times G_{vw_{2}}$ are glued with the initial metric $g_{uv;w}(0)$ at $\mathbb{R}\times \partial B_i$ and forms a new orbifold metric on the excision cobordism.
This construction gives a map 
\[G_{uw_{1}}\times G_{vw_{2}}\rightarrow \operatorname{Met}^{\mathrm{orb}}(\bar{W}, \bar{S}_{uv;w}),\]
where $\operatorname{Met}^{\mathrm{orb}}(\bar{W}, \bar{S}_{uv;w})$ denotes the space of orbifold Riemann metrics which form product metrics on ends. 
We define the family of metrics $G_{uv;w}$ as the image of this map.
The space of metrics $G_{uv;w}$ has dimension $|uv-w|_{1}$ and is noncompact if $|uv-w|_{1}\geq 1$.
We obtain the natural compactification $G_{uv;w}^{+}$ of $G_{uv;w}$ by attaching \textit{broken metrics}.

Recall the case of standard cobordism $S_{uu'}\subset[0,1]\times S^3$  with $u>u'$.
Assume that a sequence of resolutions $\sigma(u, u')=(u_{0}, u_{1}, \cdots, u_{k})$ satisfies \[u=u_{0}> u_{1}>\cdots >u_{k-1}>u_{k}=u'.\]
For $k\geq 2$, We call an element in the product space
\[\breve{G}_{\sigma(u, u')}:=\breve{G}_{uu_{1}}\times \breve{G}_{u_{1}u_{2}}\times\cdots \times \breve{G}_{u_{k-1}u'}.\]
 a \textit{broken metric}.
If $k=1$, we simply define $\breve{G}_{\sigma(u, u')}:=\breve{G}_{uu'}$.
%We extend the above definition to the case of $u=u'$ by putting $\breve{G}_{\sigma(u, u)}:=\breve{G}_{uu}$, which is a single point.

%For the case of excision cobordism $S_{uv;w}$ with $uv>w$, we define broken metrics as follows: First, let $u'v'$and $w'$ be resolutions satisfying $uv>u'v'>w'\geq w$ or $uv\geq u'v'>w'>w$. Then for any partition of resolutions $\sigma(u, u')$, $\sigma(v, v')$, and $\sigma(w', w)$, we call an element of the product space \[G_{\sigma([u,u'], [v,v']; [w', w])}:=\breve{G}_{\sigma(u,u')}\times \breve{G}_{\sigma(v, v')}\times G_{u'v';w'}\times \breve{G}_{\sigma(w', w)}\] is a broken metric on $S_{uv;w}$.

%In the case of $uv=u'v'$ and $w'=w$, we set $G_{\sigma([u,u], [v,v]; [w, w])}:=G_{uv;w}$.

For the case of excision cobordism $S_{uv;w}$ with $uv>w$, we define broken metrics as follows: First, let $u'v'$and $w'$ be resolutions satisfying
$u'v'\geq w', u>u', v>v'$ and $w'>w$.
%$uv\geq u'v'\geq w'\geq w$. 
%Then for any partition of resolutions $\sigma(u, u')$, $\sigma(v, v')$, and $\sigma(w', w)$, we call an element of the product space \[G_{\sigma([u,u'], [v,v']; [w', w])}:=\breve{G}_{\sigma(u,u')}\times \breve{G}_{\sigma(v, v')}\times G_{u'v';w'}\times \breve{G}_{\sigma(w', w)}\] is a broken metric on $S_{uv;w}$.
Then we consider the product space
\begin{eqnarray}\label{cpt face}
    G_{\sigma([u,u'], [v,v']; [w', w])}:=\breve{G}_{\sigma(u,u')}\times \breve{G}_{\sigma(v, v')}\times G_{u'v';w'}\times \breve{G}_{\sigma(w', w)}.
\end{eqnarray}
We extend the definition of (\ref{cpt face}) in the general case of $uv\geq u'v'\geq w'\geq w$.
For example, if $u=u', v>v'$ , and $w'>w$ holds, we define $G_{\sigma([u,u'], [v,v']; [w', w])}$
by dropping the factor $\breve{G}_{\sigma(u, u')}$ from (\ref{cpt face}), and other cases are similar.
In particular, if both $uv=u'v'$ and $w'=w$ hold, $G_{\sigma([u,u], [v,v]; [w, w])}$ is equal to $G_{uv;w}$.
Unless $uv=u'v'$ or $w'=w$ hold, we call an element in $G_{\sigma([u,u'], [v,v']; [w', w])}$ a broken metric in $S_{uv;w}$.
Firstly, we consider compactification of $G_{uv;w}$ when $|uv-w|_{\infty}=1$.
We define a compactification of $G_{uv;w}$ by 
\[G^{+}_{uv;w}:=\bigcup_{\sigma}G_{\sigma([u,u'], [v,v']; [w', w])}\]
equipped with the natural topology.
Here, $\sigma$ runs over every partitions between the resolutions $uv>w$, including $G_{\sigma([u,u], [v,v]; [w, w])}=G_{uv;w}$ itself.
%In particular, for a partition $\sigma([u, u'], [v, v']; [w, w'])$ of the partition number $k$ 
If the broken metrics in $\sigma([u, u'], [v, v']; [w, w'])$ break into $k+1$  metrics,
the local structure near the face $G_{\sigma([u, u'], [v, v']; [w, w'])}$
is given by 
\[(T, \infty]^{k}\times G_{\sigma([u, u'], [v, v']; [w, w'])}\]
where $T>0$ is a large enough positive number. 
We call $G_{\sigma([u, u'], [v, v']; [w, w'])}$ is a codimension $k$ face of $G^{+}_{uv;w}$.
In particular, there are %following 
three types of codimension $1$ faces in $G_{uv;w}^{+}$:
\[ \breve{G}_{uu'}\times G_{u'v;w},\ \breve{G}_{vv'}\times G_{uv'; w},\ G_{uv;w'}\times \breve{G}_{w'w}.\]

%------------------------------------
The above broken metrics are sufficient to compactify the space $G_{uv;w}$ when $|uv-w|_\infty = 1$.
However, for the general case, we need to include additional types of broken metrics.
Here, we briefly describe the compactification for type 2 excision cobordisms, following the construction in \cite[Section 7.2]{KM11u}.
In the case of type 2, the compactification $G_{uv;w}^+$ is obtained by adding broken metrics associated with stretching along a separating hypersurface $(S^3, S^1)$, in addition to the standard broken metrics described above.
This additional boundary stratum corresponds to the geometric splitting of the cobordism analogous to the splitting of $(S^4, \mathbb{RP}^2)$ discussed in \cite[Lemma 7.2]{KM11u}.
With this compactification, $G_{uv;w}^+$ has the structure of a manifold with corners.
%---------------------------------------

We consider the moduli space of instantons over the excision cobordism $(W, S_{uv;w})$ parametrized by the family of metrics $G_{uv; w}$.
As in the case of differential $f_{vu}$, for critical points $\alpha$, $\beta$ and $\gamma$ of perturbed Chern--Simons functionals of $D_u$, $D_v$ and $D'_w$ respectively, we consider the family moduli space
\[{M}_{uv; w}(\alpha, \beta; \gamma):=\bigcup_{\check{g}\in {G}_{uv;w}} M_{\check{g}}(W, S_{uv;w}; \alpha, \beta; \gamma), \]
where $M_{\check{g}}(W, S_{uv;w}; \alpha, \beta; \gamma)$ denotes the instanton moduli space for the orbifold metric $\check{g}$ introduced in \eqref{gen moduli}. 
%-----------------------------------------
We define the space of broken instantons over the face of broken metrics $G_{\sigma([u, u'], [v, v'];[w', w])}$.
For example, in the case of $u'v'\geq w', u>u', v>v'$ and $w'>w$, 
we define 
\[M_{\sigma([u, u'], [v, v'];[w', w])}(\alpha, \beta;\gamma):=\bigcup_{\alpha', \beta', \gamma'} \breve{M}_{\sigma(u, u')}(\alpha, \alpha')\times \breve{M}_{\sigma(v, v')}(\beta, \beta')\times M_{u'v';w'}(\alpha', \beta'; \gamma')\times\breve{M}_{\sigma(w', w)}(\gamma', \gamma).\]
We extend this construction to the general case of $uv\geq u'v'\geq w'\geq w$. 
For example, if $u=u', v>v'$, and $w'>w$ holds, we define
$M_{\sigma([u, u'], [v, v'];[w', w])}(\alpha, \beta;\gamma)$
by dropping the factor $\breve{M}_{\sigma(u, u')}(\alpha, \alpha')$,
and other cases are similar.
In particular, we have $M_{\sigma([u, u], [v, v];[w, w])}(\alpha, \beta;\gamma):=M_{uv;w}(\alpha, \beta;\gamma)$.
%---------------------------------------------------------------

%For type $2$ cobordism, note that moduli spaces parametrized by the faces $\breve{G}^{V}_{\sigma([u, u'], [u', u])}$ and $G^{V}_{uv;w}$ are empty
%because of dimension counting.  Here the $SO(3)$-bundle over a separating domain $(S^3, S^1)$ is no longer admissible. (In other words, this is because of the lack of non-integral condition \cite[Definition 3.1]{KM11u}.)

For the type 2 cobordism, note that the moduli spaces corresponding to the boundary strata involving connections on the separating domain $(S^3, S^1)$ are in fact empty.
This is because the relevant $SO(3)$-bundle in the separating domain $(S^3, S^1)$ is no longer admissible in the sense of the non-integral condition \cite[Definition 3.1]{KM11u}. 
%----------------------------------------------------------

We define the compactification of the moduli space

\[M^{+}_{uv;w}(\alpha, \beta;\gamma):=\bigcup_{\sigma}M_{\sigma([u,u'], [v, v'];[w', w])}(\alpha, \beta;\gamma).\]

Let $d$ be an integer, and write ${M}_{uv; w}(\alpha, \beta;\gamma)_{d}$ as the union of component with index $d$.
We have a natural compactification ${M}_{uv; w}^+(\alpha, \beta;\gamma)_{1}$ of ${M}_{uv; w}(\alpha, \beta;\gamma)_{1}$ whose boundaries are listed as follows: 
\begin{itemize}
    \item ${M}_{uv;w'}( \alpha, \beta;\gamma')_{0}\times \breve{M}_{w'w}(\gamma', \gamma)_{0}$, 
    \item $\breve{M}_{uu'}(\alpha, \alpha')_{0}\times {M}_{u'v;w}(\alpha', \beta;\gamma)_{0}$, 
    \item $\breve{M}_{vv'}(\beta, \beta')_{0}\times {M}_{uv';w}(\alpha, \beta';\gamma)_{0}$.   
\end{itemize}
One can also prove ${M}^{+}_{uv;w}(\alpha, \beta; \gamma)_{1}$ and ${M}^{+}_{uv;w}(\alpha, \beta; \gamma)_{0}$ are compact by the standard argument.

Now, we define a $\mathbb{Z}$-module map 
\[
\Psi: \CKh^\sharp (D)  \otimes \CKh^\sharp (D')  \to \CKh^\sharp (D \sqcup D')
\]
 in the following way. 
 Firstly, consider the decomposition of the product chain complex
 $\CKh^\sharp (D)  \otimes \CKh^\sharp (D')$
 into the summand of the form $C^{\sharp}({D}_{u})\otimes C^{\sharp}({D'}_{v})$.

 On each summand $C^{\sharp}({D}_{u})\otimes C^{\sharp}({D'}_{v})$ and a resolution $w$ for the diagram $D_1\sqcup D_{2}$, we define a linear map 
 \[\Psi_{uv;w}: C^{\sharp}({D}_{u})\otimes C^{\sharp}({D'}_{v})\rightarrow C^{\sharp}((D\sqcup D')_{w}).\]

 \begin{defn}
    We define
 \[ \Psi_{uv;w}(\alpha \otimes \beta):=(-1)^{s(u, v; w)}\sum_{\gamma\in \mathfrak{C}_{\pi}((D\sqcup D')_{w})}\#{M}_{uv;w}(\alpha, \beta;\gamma)_{0}\cdot \gamma.\]
 Here $s$ is an integer-valued function given by the formula:
 \[s(u, v; w)=\frac{1}{2}(|uv-w|_{1}-1)|uv-w|_{1}+\sum_{i}u_{i}+\sum_{i}v_{i}-\sum_{i}w_{i}.\]
  \end{defn}
The above construction linearly extends to get the desired map $\Psi_{uv;w}$.

\begin{rem}
    In this paper, we mainly focus on its induced map on $E^1$ page of the spectral sequences. Therefore, we do not need higher components of the moduli spaces. However, in order to compare the sign convention of Kronheimer and Mrowka \cite{KM11u} with ours, we consider corresponding orientations on higher moduli spaces as well. 
\end{rem}

The next statement is an analog of \cite[Lemma 2.1]{KM14} in our situation.
\begin{prop}\label{quantum ex}
Let $K_1$ and $K_2$ be unlinks.
    For a link cobordism 
    \[
    (W, S):(S^3, K_{1})\sqcup (S^3, K_{2})\rightarrow (S^3, K_{1}\sqcup K_{2})
    \]
    with a family of metrics $G$, the induced map on the cube complexes has the order 
    \[\chi(S)+S\cdot S-4\left\lfloor \frac{S\cdot S}{8}\right\rfloor+\mathrm{dim}G\]
    with respect to the filtration by $Q$.
    In particular, if $S\cdot S <8$ then the induced map has order
    \[\chi(S)+S\cdot S+\mathrm{dim}G.\]
\end{prop}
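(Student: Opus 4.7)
The plan is to adapt the strategy of \cite[Lemma 2.1]{KM14} to the three-ended setting. Since the induced map counts zero-dimensional elements of the parametrized singular instanton moduli space $M_G(W, S; \alpha, \beta; \gamma)$, the order with respect to the $Q$-filtration is controlled by the expected dimension of these moduli spaces together with the relationship between the $Q$-grading and the Floer grading at the ends.

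First, I would exploit that $K_1$, $K_2$ and $K_1 \sqcup K_2$ are unlinks. Their framed representation varieties $R^\omega$ are products of two-spheres and, with the standard Morse perturbation, the generators of $C^\sharp$ correspond to products of poles whose $Q$-degree equals their absolute Morse index. Consequently, for matrix coefficients indexed by $\alpha, \beta, \gamma$, the $Q$-shift $Q(\gamma) - Q(\alpha) - Q(\beta)$ translates directly into the Floer grading difference that enters the singular index formula.

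Second, I would apply the singular ASD index theorem on the orbifold $4$-manifold $(\overline{W}, \overline{S})$. For a fixed orbifold metric $g$ and instanton number $\kappa$, the expected dimension of $M_g(W, S; \alpha, \beta; \gamma)$ is a linear combination of $\kappa$, $\chi(W)$, $\sigma(W)$, $\chi(S)$, $S \cdot S$, and the Floer gradings of $\alpha, \beta, \gamma$. Varying the metric through the family $G$ adds $\dim G$ to the expected dimension. Since the cobordism map only sees zero-dimensional moduli, nonvanishing matrix coefficients force this expected dimension to be zero, which yields an equation of the schematic form
\[
Q(\gamma) - Q(\alpha) - Q(\beta) \;=\; \chi(S) + S \cdot S + \dim G - 8\kappa,
\]
where all other topological contributions cancel because the $Q$-grading is normalized (via the $n_{\pm}$ and $\sigma(v, o)$ terms in the definition) precisely to absorb them.

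Third, I would minimize over the admissible instanton numbers. The $w_2$-integrality constraint on the orbifold $SO(3)$-bundle forces $\kappa$ to lie in a coset of $\Z$ inside $\tfrac{1}{2}\Z$ determined by $S \cdot S \pmod 8$, so the smallest admissible instanton number satisfies $8\kappa_{\min} = 8\lfloor S \cdot S/8\rfloor$ after the standard rewriting. Substituting yields the bound
\[
Q(\gamma) - Q(\alpha) - Q(\beta) \;\geq\; \chi(S) + S \cdot S - 4\lfloor S \cdot S/8\rfloor + \dim G,
\]
giving the claimed filtration order. When $S \cdot S < 8$ the floor term vanishes and the formula simplifies to $\chi(S) + S \cdot S + \dim G$. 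The main obstacle is the numerical bookkeeping: verifying that the coefficients emerging from the singular index theorem combined with the normalizations built into $Q$ produce exactly the stated expression, and in particular checking that the minimization of admissible $\kappa$ over an orbifold bundle with non-trivial $w_2$ contributes precisely $4\lfloor S \cdot S/8\rfloor$ rather than a shifted value. This is the same arithmetic as in \cite[Lemma 2.1]{KM14}, carried out for a three-ended cobordism; once the constants are verified, the bound is immediate.
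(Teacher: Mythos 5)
Your overall plan matches the paper's: cap off the ends, invoke the singular ASD index formula for the resulting closed pair, translate $Q$-degrees into the Morse-index terms that appear in the index, and then minimize over the admissible instanton charge, all in the spirit of \cite[Lemma 2.1]{KM14}. The paper carries this out explicitly by first picking the minimal-index generators $\beta_0 = \bv_+ \otimes \cdots \otimes \bv_+$ at the incoming ends and the maximal-index generator $\beta' = \bv_- \otimes \cdots \otimes \bv_-$ at the outgoing end, filling each end with disconnecting disks so the closed-pair formula of \cite{KM93i} applies directly, and then correcting by the Morse-index differences for general generators.

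Two details in your sketch would derail the arithmetic. First, the sign on the instanton charge: solving $\mathrm{ind}(D_A) = 0$ in the paper's formula gives
$Q(\gamma) - Q(\alpha) - Q(\beta) = 8\kappa - \tfrac{3}{2}(\chi(W) + \sigma(W)) + \chi(S) + \tfrac{1}{2}S\cdot S + \dim G$,
so $\kappa$ appears with a $+$ sign, whereas you wrote $-8\kappa$. Since the admissible charges are bounded below, not above, your sign would make the minimization produce no bound at all. Second, you attribute the cancellation of the $\chi(W)$ and $\sigma(W)$ contributions to the $n_\pm$ and $\sigma(v,o)$ terms in the grading, but those normalizations are part of the $q$-grading, not the $Q$-grading appearing in the statement; $Q$ is the raw Morse index. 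In the paper these contributions disappear because $\chi(W) = \sigma(W) = 0$ for the excision cobordism (\Cref{topological computation of excision}), and the residual $b_0$-terms from the capping disks are absorbed into the Morse-index correction. You flagged the numerical bookkeeping as a risk, but the $\kappa$ sign and the $Q$ versus $q$ conflation are conceptual, not just bookkeeping, and need to be fixed before the derivation goes through.
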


\begin{proof}
We follow a parallel argument demonstrated in \cite[Lemma 2.1]{KM14}.
Let $(W, S_{uv;w})$ be the cobordism of pairs concerning.
We fix a critical point $\beta_{0}=v_{+}\otimes \cdots \otimes v_{+}$ on the end corresponding to $(K_{1})_{u}$ , and similarly, we fix a critical point $\beta_{1}$ for the end $(K_{2})_{v}$.
These critical points have the minimal Morse index among the critical points set at each end.
Instead, we fix a critical point $\beta'=v_{-}\otimes \cdots \otimes v_{-}$ of the maximal Morse index for the end $(K_{1}\sqcup K_{2})_{w}$.
Filling up each end $(K_{1})_{u}$, $(K_{2})_{v}$, and $(K_{1}\sqcup K_{2})_{w}$ by disconnecting disks in the cylinder, we obtain a cobordism $(\bar{W}, \bar{S}_{uv;w})$ connecting copies of $(S^3, H_{\omega})\sqcup (S^{3}, H_{\omega})$ with the opposite orientations.
Let $u_{0}$ be the unique generator over the pair $(S^3,H_{\omega})$.
Then the virtual dimension formula of the parametrized moduli space $M_{G}(\bar{W}, \bar{S}_{uv;w}; u_{0}, u_{0}; u_{0}, u_{0})$ reduces to the index formula for a closed pair \cite{KM93i} and hence we have the formula:
\[
\begin{aligned}
\mathrm{dim}M_{G}(\bar{W}, \bar{S}_{uv;w}; u_{0}, u_{0}; u_{0}, u_{0})_{\kappa} &= 8\kappa -\frac{3}{2} (\chi (\bar{W}) + \sigma (\bar{W}) )  + \chi (\bar{S}_{uv;w}) + \frac{1}{2}\bar{S} \cdot \bar{S}+\mathrm{dim}G \\
&= 8\kappa -\frac{3}{2} (\chi ({W}) + \sigma ({W}) ) %\\
%&\quad 
+ \chi ({S}_{uv;w}) + \frac{1}{2}{S} \cdot {S} \\
&\quad +b_{0}((K_{1})_{u})+b_{0}((K_{2})_{v})+b_{0}((K_{1}\sqcup K_{2})_{w})+\mathrm{dim}G.
\end{aligned}
\]
The second equality follows from the additivity of the Euler characteristic and the signature.

Finally,  we replace generators $\beta_{0}$, $\beta_{1}$, and $\beta'$ by arbitrarily choices.
This operation changes the index formula by the difference of Morse indices;
\[Q(\beta_{1})+Q(\beta_{2})-Q(\beta')-b_{0}((K_{1})_{u})-b_{1}((K_{2})_{v})-b_{0}((K_{1}\sqcup K_{2})_{w}).\]

Hence, we obtain the index formula:
\begin{align*}
\textrm{ind}(D_{A})&= 8 \kappa(A) - \frac{3}{2} (\chi (W) + \sigma (W) )  + \chi ( S) + \frac{1}{2}S \cdot S+  Q(\beta_0) +Q(\beta_{1})-  Q(\beta') + \dim G  . 
\end{align*} 

For the excision cobordism $(W, S)$, we have \[\chi(W)=\sigma(W)=\chi(S)=0\]
from \cref{topological computation of excision} and hence the desired index formula follows.
\end{proof}

\begin{prop}\label{general rel}
For pseudo-diagrams $D_{u}$, $D'_{v}$, and $(D\sqcup D')_{w}$, the following relations hold:
\[\sum_{u>u'}\Psi_{u'v;w}f_{uu'}+\sum_{v>v'}(-1)^{|v-v'|_{1}(n-|u|_{1})}\Psi_{uv';w}f_{vv'}-\sum_{w>w'}f_{w'w}\Psi_{uv;w'}=0,\]
where $n$ is the number of crossings of the pseudo-diagram $D_{u}$.
\end{prop}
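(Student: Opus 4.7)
The plan is to deduce the identity by analyzing the signed boundary of the $1$-dimensional compactified parametrized moduli space $\breve{M}^+_{uv;w}(\alpha,\beta;\gamma)_1$ over the family $G_{uv;w}$. By standard compactness, transversality, and gluing arguments analogous to those used in \cite{KM11u}, this space is a compact oriented $1$-manifold with boundary, so the total signed count of its boundary points is zero. Pairing against an arbitrary generator $\gamma$ on the outgoing end will convert this to the scalar identity being claimed, provided the boundary strata are matched correctly with the three sums in the proposition.

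First I would recall that the codimension-one faces of $\breve{M}^+_{uv;w}(\alpha,\beta;\gamma)_1$ are precisely the three types listed in the preceding discussion,
\[
\breve{M}_{uu'}(\alpha,\alpha')_0 \times M_{u'v;w}(\alpha',\beta;\gamma)_0,\qquad
\breve{M}_{vv'}(\beta,\beta')_0 \times M_{uv';w}(\alpha,\beta';\gamma)_0,
\]
and
\[
M_{uv;w'}(\alpha,\beta;\gamma')_0 \times \breve{M}_{w'w}(\gamma',\gamma)_0,
\]
corresponding respectively to cutting off a cylinder on the $u$-end, on the $v$-end, or on the $w$-end of the excision cobordism. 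Summing over all choices of the intermediate critical points $\alpha'$, $\beta'$, $\gamma'$ and intermediate resolutions $u'$, $v'$, $w'$ produces, up to signs, the three compositions $\Psi_{u'v;w} f_{uu'}$, $\Psi_{uv';w} f_{vv'}$, and $f_{w'w} \Psi_{uv;w'}$ appearing in the claim.

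Next I would carry out the sign bookkeeping. Each contribution is of the form $\pm \#(\breve{M}\times M)_0 \cdot \gamma$, and the overall sign is the product of four inputs: (i)~the orientation convention for the fibre product of oriented parametrized moduli spaces developed in \Cref{ori app}; (ii)~the explicit prefactor $(-1)^{s(u,u')}$ in the definition of $f_{uu'}$; (iii)~the explicit prefactor $(-1)^{s(u,v;w)}$ in the definition of $\Psi_{uv;w}$; and (iv)~the sign with which a codimension-one face of $G^+_{uv;w}$ is oriented relative to the interior. A direct computation modulo $2$, entirely parallel to the derivation of equations~\eqref{eqn3}--\eqref{eqn4} in the proof of \Cref{chain map eqn}, then reduces the required vanishing to an arithmetic identity among the integers $s(u,u')$, $s(v,v')$, $s(w',w)$, $s(u,v;w)$, $|uv-w|_1$, and $|u|_1,|v|_1,|w|_1$.

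The main obstacle will be reproducing the factor $(-1)^{|v-v'|_1(n-|u|_1)}$ in the second sum. This is a Koszul sign built into the tensor-product complex $\CKh^\sharp(D)\otimes \CKh^\sharp(D')$: the differential $f_{vv'}$ acts on the second factor and must be commuted past a generator in $C^\sharp(D_u)$ whose cube-complex degree is $n-|u|_1$. From the parametrized moduli-space side the two factors $D$ and $D'$ enter symmetrically, so the factor must appear as the discrepancy between the exponent $s(u,v';w)+s(v,v')$ carried by the broken configuration and the exponent $s(u,v;w)$ carried by the interior, combined with the orientation-reversal coming from writing the boundary of $\breve{M}_{vv'}(\beta,\beta')_0 \times M_{uv';w}(\alpha,\beta';\gamma)_0$ in the order dictated by the family $G_{uv;w}$. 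Once this congruence modulo $2$ is verified, the three boundary contributions assemble into the stated relation and the proof is complete.
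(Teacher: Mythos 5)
Your proposal is correct and follows essentially the same route as the paper: count signed boundary points of the $1$-dimensional compactified parametrized moduli space $\breve{M}^+_{uv;w}(\alpha,\beta;\gamma)_1$, identify the three codimension-one strata (which simplify since all ends are pseudo-diagrams), and then reconcile the orientation prefactors $s(u,u')$, $s(u,v;w)$ with the face orientations by the same modulo-$2$ bookkeeping used for \Cref{chain map eqn}. Your identification of the anomalous factor $(-1)^{|v-v'|_1(n-|u|_1)}$ as a commutation sign (the paper realizes it as the cost of transposing the metric-family orientations $o_G(v,v')$ and $o_G(u,\mathbf{-1})$, with $\dim G_{u,\mathbf{-1}} \equiv n-|u|_1 \pmod 2$) matches the derivation in \Cref{ori app}.
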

\begin{proof}

The proof is given by considering the counting of the oriented boundary points of the compactified one-dimensional moduli space ${M}^{+}_{uv;w}(\alpha, \beta;\gamma)_{1}$.

Now we define maps by
\begin{eqnarray*}
\bar{m}_{uv}(\alpha)&=&(-1)^{-\mathrm{dim}\breve{G}_{uv}}\sum_{\beta}\# \breve{M}_{uv}(\alpha, \beta)_{0}\cdot \beta \\
\bar{m}_{uv;w}(\alpha\otimes \beta)&=&\sum_{\gamma}\# {M}_{uv;w}(\alpha, \beta; \gamma)_{0}\cdot \gamma
\end{eqnarray*}
%In particular, we have $f_{uv}=(-1)^{s(u, v)\textcolor{blue}{+|u-v|_{1}-1}}\bar{m}_{uv}$ and $\Psi_{uv;w}=(-1)^{s(u, v; w)\textcolor{blue}{+|uv-w|_{1}-1}}\bar{m}_{uv;w}$.
Since we assume that all links that appeared in the ends of the excision cobordism are pseudo-diagrams, there are only contributions from $M_{\partial G_{uv;w}}(\alpha, \beta;\gamma)_{0}$ to the boundary of 
$\breve{M}^{+}_{uv;w}(\alpha, \beta;\gamma)_{1}$.
Hence, \Cref{face of cpt moduli-1} implies the following:
    %\begin{equation}\label{eqn1}
%\begin{split}
        %&\sum_{u > u'} (-1)^{(|u-u'|_{1}-1)(|u'v-w|_{1}-1)} \bar{m}_{u'v;w} \circ (\bar{m}_{uu'} \otimes 1) \\
        %&+ \sum_{v > v'} (-1)^{|v-v'|_{1}(n-|u|_{1})+(|v-v'|_{1}-1)(|uv'-w|_{1}-1)} (\bar{m}_{uv';w} \circ 1) \otimes \bar{m}_{vv'} \\
        %&- \sum_{w > w'} (-1)^{|uv-w'|_{1}|w'-w|_{1}} \bar{m}_{w'w} \circ \bar{m}_{uv;w'} = 0.
%\end{split}
%\end{equation}

  \begin{equation}\label{eqn1}
\begin{split}
        &\sum_{u > u'} (-1)^{(|u-u'|_{1}-1)(|u'v-w|_{1}+1)} \bar{m}_{u'v;w} \circ (\bar{m}_{uu'} \otimes 1) \\
        &+ \sum_{v > v'} (-1)^{|v-v'|_{1}(n-|u|_{1})+(|v-v'|_{1}-1)(|uv'-w|_{1}+1)} (\bar{m}_{uv';w} \circ 1) \otimes \bar{m}_{vv'} \\
        &- \sum_{w > w'} (-1)^{|uv-w'|_{1}|w'-w|_{1}+|w'-w|_{1}-1} \bar{m}_{w'w} \circ \bar{m}_{uv;w'} = 0.
\end{split}
\end{equation}
 Note that, we have $f_{uv}=(-1)^{s(u, v)}\bar{m}_{uv}$ and $\Psi_{uv;w}=(-1)^{s(u, v; w)}\bar{m}_{uv;w}$.
We can check that the above relation is equivalent to 
\begin{equation}\label{eqn2}
\begin{split}
        &\sum_{u > u'\geq w_1} (-1)^{s(u, u')+s(u', v;w)} \bar{m}_{u'v;w} \circ (\bar{m}_{uu'} \otimes 1) \\
        &+ \sum_{v > v'\geq w_2} (-1)^{|v-v'|_{1}(n-|u|_{1})+s(v,v')+s(u,v';w)} \bar{m}_{uv';w} \circ (1 \otimes \bar{m}_{vv'} )\\
        &- \sum_{uv\geq w' > w} (-1)^{s(u, v; w')+s(w', w)} \bar{m}_{w'w} \circ \bar{m}_{uv;w'} = 0,
\end{split}
\end{equation}
by a similar manner as in the proof of Proposition \ref{chain map eqn}.
\end{proof}
We define a map $\Psi:CKh^{\sharp}(D)\otimes \CKh^{\sharp}(D')\rightarrow \CKh^{\sharp}(D\sqcup D')$ by \[\Psi:=\sum_{1\geq uv\geq w\geq 0}\Psi_{uv;w}.\]
The following is an important property of the excision map $\Psi$: 
\begin{prop}\label{qfilE1ofpsi}
The map $\Psi$ is $q$-filtered map. 
Also, the map $E^{1}(\Psi)$ is a chain map over $\Z$. 
\end{prop}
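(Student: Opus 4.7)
The plan is to establish the two claims separately.

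For the q-filteredness, I would apply the index formula of Proposition~\ref{quantum ex} to each component $\Psi_{uv;w}$. By Lemma~\ref{topological computation of excision}, the underlying excision cobordism $(W,S)$ has $\chi(W) = \sigma(W) = 0$ and the cylindrical surface $S$ has $\chi(S) = 0$, $S \cdot S = 0$. Once the standard saddles $S_{uw_1}$ and $S_{vw_2}$ are inserted, we get $\chi(S_{uv;w}) = -|uv-w|_1$ while $S_{uv;w}\cdot S_{uv;w} = 0$ in the cases we need (the insertion lives in disjoint balls away from the excision surgery, and each standard saddle for type~$\leq 1$ is orientable). Proposition~\ref{quantum ex} therefore yields $\ord_Q(\Psi_{uv;w}) \geq -|uv-w|_1 + \dim G_{uv;w} = 0$. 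Converting from $Q$-grading to $q$-grading introduces an additional shift of $|uv - w|_1 + \tfrac{3}{2}\bigl(\sigma(w,o) - \sigma_1(u,o_1) - \sigma_2(v,o_2)\bigr)$; the sigma difference simplifies using additivity of $\sigma$ under disjoint unions, $\sigma((u,v), o) = \sigma_1(u,o_1) + \sigma_2(v,o_2)$, together with the composition law of self-intersections in the cylinder, to $-S_{(u,v),w}\cdot S_{(u,v),w}$, which again vanishes in the relevant cases. Thus $\ord_q(\Psi) \geq 0$.

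For the chain map property of $E^1(\Psi)$, the approach is to apply Proposition~\ref{general rel} and extract the appropriate components. First, $\Psi$ is $h$-filtered of order $\geq 0$ since $\Psi_{uv;w}$ shifts $h$-grading by $|uv-w|_1 \geq 0$, so $\Psi$ descends to the spectral sequence. Since $\ord_h(d^\sharp)\geq 1$ gives $d^{E^0}=0$, the map $E^1(\Psi)$ coincides with the shift-zero part $\bigoplus_{uv}\Psi_{uv;uv}$. To check the chain-map identity on $E^1$, fix input $(u,v)$ and output $w$ with $|uv-w|_1=1$; without loss of generality $w = (u,v')$ with $|v-v'|_1 = 1$ (the case $w = (u',v)$ is symmetric). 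Apply Proposition~\ref{general rel} to the triple $(u,v,w)$: the first sum is empty because $(u'',v)\geq (u,v')$ would force $u''\geq u$, incompatible with $u > u''$; the second sum collapses to the single term $\pm\,\Psi_{uv';(u,v')}\circ (\mathrm{id}\otimes f_{vv'})$; and the third sum collapses to the single term $f_{uv,(u,v')}\circ \Psi_{uv;uv}$. Their vanishing sum is exactly the desired component of the chain-map identity for $E^1(\Psi)$.

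The main obstacle will be the careful bookkeeping of signs, particularly reconciling the sign $(-1)^{|v-v'|_1(n-|u|_1)}$ appearing in Proposition~\ref{general rel} with the Koszul sign convention for the tensor-product differential on $\CKh^\sharp(D_1)\otimes \CKh^\sharp(D_2)$; these must match modulo $2$ for the chain-map identity to take the expected form. A secondary technical difficulty lies in Step~1, where verifying that the self-intersection $S_{(u,v),w}\cdot S_{(u,v),w}$ vanishes in the regimes we use requires a choice of embedding of the inserted standard saddles in disjoint tubes of $[0,1]\times \R^3$ compatible with the product structure of the excision surface, so that the additivity of $\chi$ and $S\cdot S$ under the insertion can be invoked cleanly.
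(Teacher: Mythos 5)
Your proof follows essentially the same two-step strategy as the paper: invoking Proposition~\ref{quantum ex} together with the topological computations in Lemma~\ref{topological computation of excision} for $q$-filteredness, then specializing Proposition~\ref{general rel} to $|uv-w|_1 = 1$ (so that exactly two of the three sums survive, giving either $\Psi_{u'v;w}f_{uu'} - f_{w'w}\Psi_{uv;w'} = 0$ or $(-1)^{n-|u|_1}\Psi_{uv';w}f_{vv'} - f_{w'w}\Psi_{uv;w'} = 0$) for the $E^1$ chain-map identity. The only divergence is in bookkeeping for the first part: you compute $\chi(S_{uv;w}) = -|uv-w|_1$ and carry out the $Q$-to-$q$ conversion explicitly, whereas the paper asserts $\chi(S_{uv;w}) = 0$ and treats the $Q$-order and $q$-order interchangeably; the two discrepancies cancel and both routes yield $\ord_q(\Psi) \geq 0$.
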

\begin{proof}
First, we discuss the effect on the $q$-filtration. 
We can suppose $|uv-w|_{1}\geq 0$.
Recall that the map $\Psi_{uv;w}$ has the order 
\[\chi(S_{uv;w})+S_{uv;w}\cdot S_{uv;w}-4\left\lfloor \frac{S_{uv; w}\cdot S_{uv; w}}{8}\right\rfloor+|uv-w|_{1}\]
with respect to the filtration defined by $Q$ from \cref{quantum ex}. 
Since the cobordism $S_{uv;w}$ is type $0$ or $1$, we can check that $\chi(S_{uv;w})=S_{uv;w}\cdot S_{uv;w}=0$.
In particular, the map $\Psi_{uv;w}$ has the order
\[\ord_{q}(\Psi_{uv;w})\geq|uv-w|_{1}-1 \geq 0 .\]
for the $q$-filtration.
Hence, the map $\Psi$ is a $q$-filtered map.

For resolutions $u$, $v$, and $w$ with $|uv-w|_{1}=1$,   \cref{general rel} implies either %one of the relations
\[\Psi_{u'v;w}f_{uu'}-f_{w'w}\Psi_{uv;w'}=0\]
or 
\[(-1)^{n-|u|_{1}}\Psi_{uv';w}f_{vv'}-f_{w'w}\Psi_{uv;w'}=0\]
holds,
where $u'v=w$, or $uv'=w$ respectively and $uv=w'$.
Hence, the map induced from $\Psi$ on the $E^1$-term is a chain map. This completes the proof. 
\end{proof}

Let $u\in \Z^{N_{1}}$, $v\in \Z^{N_{2}}$ and $w\in \Z^{N_{1}+N_{2}}$ be resolutions of pseudo-diagrams $D_1$, $D_2$, and $D_1 \sqcup D_2$ so that the resolutions are the unlink. 
Suppose $uv \geq w$.
Note that a resolution $w$ for the diagram $D_1\sqcup D_2$ can be written $w=(w_1, w_2 )$ using two resolutions $w_1$ and $w_2$ for diagrams $D_1$ and $D_2$ respectively.
We consider the cobordism: 
\[(W, S_{uv;w}): (S^3_+, (D_1)_{u}\sqcup H_{\omega})\sqcup (S^3_-, (D_2)_{v}\sqcup H_{\omega})\rightarrow (S^3, (D_1\sqcup D_2)_{w}\sqcup H_{\omega})\sqcup (S^3, H_{\omega}).
\]
Next, we take a link cobordism 
\[
T : D \to D_1, T \subset [0,1]\times S^3_+  
\]
such that $D$ and $D_1$ are the same diagram near the fixed crossings of the pseudo diagrams and the cobordism $T$ does not touch neighborhoods of the crossings and a small neighborhood of $\infty \in S^3$. 
Take small neighborhoods $\{B_i\}_{i \in I}$ of the crossings and a small neighborhood $B_\infty$ of $\infty \in S^3$. 
In other words, we suppose $T$ is product outside of  $[0,1]\times \overset{\circ}{S} \subset [0,1]\times S^3_+ $, where $\overset{\circ}{S}$ is given as 
\[
\overset{\circ}{S} := S^3 \setminus \left(\bigcup_{i \in I} \overset{\circ}{B}_i \cup \overset{\circ}{B}_\infty \right). 
\]
Then, we define 
\begin{align}\label{Tbordism}
(W, T_{uv;w} ) 
:= ([0,1] \times S^3_+, T \sqcup [0,1]\times H  ) \circ  (W, S_{uv;w}). 
\end{align}
Here, we take an identification $W \cong [0,1] \times S^3_+ \circ W$.
We will simply denote this cobordism by $T_{uv;w}$, and $\bar{T}_{uv;w}$ denotes the non-compact cobordism obtained by attaching a half cylinder on each boundary component.
From the assumption on the cobordism $T$, we see 
\[
T_{uv;w} \subset [0,1]\times Y_{\sqcup } \subset W \text{ and } \bar{T}_{uv;w} \subset \R\times Y_{\sqcup } \subset \bar{W}. 
\]
Moreover, one can extend the embedding $[0,1]\times \overset{\circ}{S} \hookrightarrow [0,1]\times S^3_+$ to 
\[
\iota: \mathbb{R} \times  \overset{\circ}{S} \hookrightarrow \R \times Y_{\sqcup } \subset \bar{W}. 
\]
We first suppose $|uv-w|_\infty \leq 1$. For such a tuple $(u,v,w)$, we introduce a family of orbifold metrics $G^T_{uv; w}$. The space is characterized as follows: 
\[
G^T_{uv; w} =  \R \times G_{uv; w}, 
\]
where this additional parameter $\R$ comes from implanting a fixed Riemann metric on $\R\times \overset{\circ}{S}$ translated by the $\R$-action. Here, we are assuming that the orbifold metrics near the neighborhood of $\R\times \partial (\overset{\circ}{S})$ are $\R$-invariant. 

As in the case of $G_{uv;w}$, we can define a natural compactification $G^{T, +}_{uv;w}$ of $G^{ +}_{uv;w}$. Since it is analogous to $G^+_{uv;w}$, we do not write the full description of it. Instead, we describe the codimension one face of $G^{T, +}_{uv;w}$: 
\[
\begin{array}{c}
    G_{uu'}\times G^{T}_{u'v;w}, \quad G_{vv'}\times G^{T}_{uv'; w}, \\
    G^{T}_{uv;w'}\times G_{w'w}, \quad G^T_{uu'}\times G_{u'v;w}, \\
    G_{uv;w'}\times G^T_{w'w}
\end{array}
\]

where the notations ${G}^T_{uu'}$ and ${G}^T_{w'w}$ denote the families of orbifold metrics for $(\R \times S^3_+, T\circ S_{uu'})$ and $(\R \times S^3, T\circ S_{w'w} )$ again described as follows: 
\begin{align*}
{G}^T_{uu'} &=\R \times  {G}_{uu'}  \\
{G}^T_{w'w}& = \R \times  {G}_{w'w}. 
\end{align*}
Here, the additional $\R$-parameter comes from implanting translated orbifold metrics on a nighborhood of $\bar{T\circ S_{uu'} }$ and $\bar{T\circ S_{w'w} }$ in $\R \times S^3_+$ and $\R \times S^3$ respectively.
Corresponding these boundaries, we have several components of codimension one face in the parametrized instanton moduli spaces. 
%The real line $\mathbb{R}$ parameterizes outward or inward normal directions.
One can associate a compatible orientation on $G^{T}_{uv;w}$ with the fixed orientations on $G_{uv}$ given in Kronheimer--Mrowka's argument. See \cref{ori app} for its details. 
We write the associated compactified moduli space ${M}^{+}_{uv;w}(W, T; \alpha, \beta; \gamma) $ as certain union of ${M}^{+}_{uv;w}(W, T; \alpha, \beta; \gamma; \check{g})$ parametrized by $\check{g} \in {G}^{T, +} _{uv;w}$ with suitable topology as in the case of ${M}^{+}_{uv;w}(W, S; \alpha, \beta; \gamma)$.  
Here, $(W, T)$ is a cobordism of pairs arising from the excision of disjoint link diagrams.
Now, with such orientations, we define a $\Z$-module map 
\[
\Psi^T:  \CKh^\sharp(D)\otimes \CKh^\sharp(D') \to  \CKh^\sharp (D \sqcup D') 
\]
by 
\[
\Psi^T :=\sum_{1\geq uv\geq w\geq 0}\Psi^T_{uv;w}, 
\]
where $\Psi^T_{uv;w}$ is defined as 
\begin{eqnarray*}
\Psi^T_{uv;w}(\alpha \otimes \beta)&:=&(-1)^{s^{T}(u, v; w)}\sum_{\gamma\in \mathfrak{C}_{\pi}((D\sqcup D' )_{w})}\#{M}_{uv;w}(W, T; \alpha, \beta;\gamma)_{0}\cdot \gamma,\\
s^{T}(u, v;w)&:=&\frac{1}{2}|uv-w|_{1}(|uv-w|_{1}+1)+\sum_{i}u_{i}+\sum_{i}v_{i}-\sum_{i}w_{i}
\end{eqnarray*}

The proof of the following proposition is essentially the same as that of \cref{general rel}.

\begin{prop}\label{excision_T}
   Suppose $D $ and $D'$ are pseudo-diagrams. Then, we have 
   \[
\sum_{u>u'}( \Psi^T_{u'v;w}f_{uu'}+\epsilon_{1}\Psi_{u'v;w}f_{uu'}^T ) +\sum_{v>v'} (-1)^{|v-v'|_{1}(n-|u|_{1})} \Psi^T_{uv';w}f'_{vv'} -\sum_{w>w'} (f''_{w'w}\Psi^T_{uv;w'} + \epsilon_{2}(f''_{w'w})^T\Psi_{uv;w'})=0, 
   \]
   where $f_{uu'}^T $, $(f'_{vv'})^T$ and $(f''_{w'w})^T$ are the components of Kronheimer--Mrowka's cobordism map with respect to $T$ introduced in \cite{KM14} and 
   \begin{eqnarray*}
       \epsilon_{1}&=&\epsilon_{1}(u, u', v; w)\\
       \epsilon_{2}&=&\epsilon_{2}(u, v;w', w)
   \end{eqnarray*}
   are $\pm1$-valued functions depending on the displayed resolutions. 
   In particular, $\epsilon_{1}\equiv_{(2)}\epsilon_{2}$ for $|uv-w|_{1}=0$.
   \qed
\end{prop}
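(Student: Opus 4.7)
The plan is to mimic the proof of \Cref{general rel} by considering the boundary of the compactified one-dimensional parametrized moduli space $\breve{M}^+_{uv;w}(W, T;\alpha,\beta;\gamma)_{1}$ associated to the family $G^T_{uv;w}$. Since $D$ and $D'$ are pseudo-diagrams, each end of the excision cobordism carries an unlink, so the limiting factorizations through intermediate critical sets are completely described by the listed codimension-one faces; no bubbling of $V$-type breakings occurs for type $\leq 1$ data. As $\breve{M}^+_{uv;w}(W, T;\alpha,\beta;\gamma)_{1}$ is a compact oriented $1$-manifold with boundary, the signed count of its boundary points vanishes. This identity, once unpacked at the level of the maps $\bar{m}_{uv;w}$, $\bar{m}^T_{uv;w}$, $\bar{m}_{uu'}$, and $\bar{m}^T_{uu'}$, will yield the desired relation.

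Next, I would match each of the five codimension-one faces listed in the excerpt to one of the five terms in the claimed identity. Concretely, the face $G_{uu'} \times G^T_{u'v;w}$ contributes $\Psi^T_{u'v;w} \circ (f_{uu'} \otimes 1)$, while $G^T_{uu'} \times G_{u'v;w}$ contributes $\Psi_{u'v;w} \circ (f^T_{uu'} \otimes 1)$; symmetrically, $G_{vv'} \times G^T_{uv';w}$ gives the $v$-term with the $(-1)^{|v-v'|_1(n-|u|_1)}$ coming from commuting the differential past the first tensor factor (exactly as in \Cref{general rel}); and the two outgoing faces $G^T_{uv;w'} \times G_{w'w}$ and $G_{uv;w'} \times G^T_{w'w}$ contribute the two terms $f''_{w'w} \circ \Psi^T_{uv;w'}$ and $(f''_{w'w})^T \circ \Psi_{uv;w'}$ respectively. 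With the conventions on orienting $G^T = \R \times G$ fixed in \Cref{ori app}, \Cref{excision_orientation} (together with its evident extension to the $G^T$ setting) provides a sign identification analogous to the one used to pass from \eqref{eqn1} to \eqref{eqn2} in the proof of \Cref{general rel}.

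The last step is to translate the $\bar{m}$-relation into the claimed relation in $f$, $f^T$, $\Psi$, $\Psi^T$ notation. This introduces the combinatorial signs $s(u,u')$, $s^T(u,u')$, $s(u,v;w)$, and $s^T(u,v;w)$ associated to each map. The signs $\epsilon_1, \epsilon_2, \epsilon_3$ are then defined as the residual parities after absorbing all the $s$- and $s^T$-corrections, together with the orientation signs read off from the compactification; they depend only on the listed resolutions and not on $\alpha,\beta,\gamma$. For the final assertion, when $|uv - w|_1 = 0$ (so in particular $u'v = w$ forces a single crossing change on the incoming side and $uv = w'$ similarly forces one on the outgoing side), the formulas for $s$, $s^T$, $s(u,v;w)$, $s^T(u,v;w)$ simplify enough that $\epsilon_1$ and $\epsilon_3$ are the same mod~$2$; this is a direct parity computation analogous to the one in \Cref{chain map eqn} that verified the congruences $\delta_i \equiv_2 \epsilon_i$.

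The main obstacle will be the bookkeeping of orientations: the family $G^T_{uv;w} = \R \times G_{uv;w}$ carries an extra $\R$-factor whose position among the factors of a broken face affects the induced boundary orientation, and this must be tracked carefully through each of the five boundary strata. Once the orientation of $\R$ relative to $G_{uv;w}$, $G_{uu'}$, $G_{w'w}$ is fixed compatibly with the conventions of \Cref{ori_of_KMmap}, the signs $\epsilon_1, \epsilon_2, \epsilon_3$ are determined and the remaining manipulations are routine, exactly paralleling \eqref{eqn2}. I expect no new analytic input beyond what is already established for $\breve{M}^+_{uv;w}(\alpha,\beta;\gamma)$; only the orientation accounting is new.
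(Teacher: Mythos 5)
Your proposal reproduces the argument the paper has in mind: since the paper only remarks that the proof is "essentially the same as that of Proposition~\ref{general rel}," the intended proof is exactly counting the oriented boundary of $\breve{M}^{+}_{uv;w}(W,T;\alpha,\beta;\gamma)_1$ over the compactified family $G^{T,+}_{uv;w}$, matching the five codimension-one faces (as listed in the final proposition of the appendix) to the five terms, and then absorbing the $s$/$s^T$ corrections into the $\epsilon_i$. Your account is faithful to this, including the correct observation that there is no $G^T_{vv'}$-type breaking (since $T$ is supported on the $u$-side) and that no $V$-type bubbling enters for $|uv-w|_\infty\le 1$.
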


%\begin{prop}\label{excision_T}
   %Suppose $D $ and $D'$ are pseudo-diagrams. Then, we have 
   %\[
%\sum_{u>u'}( \Psi^T_{u'v;w}f_{uu'}+\epsilon_{1}\Psi_{u'v;w}f_{uu'}^T ) +\sum_{v>v'} (-1)^{|v-v'|_{1}(n-|u|_{1})} \Psi^T_{uv';w}f'_{vv'} -\sum_{w>w'} (\epsilon_{2}f''_{w'w}\Psi^T_{uv;w'} + \epsilon_{3}(f''_{w'w})^T\Psi_{uv;w'})=0, 
   %\]
   %where $f_{uu'}^T $, $(f'_{vv'})^T$ and $(f''_{w'w})^T$ are the components of Kronheimer--Mrowka's cobordism map with respect to $T$ introduced in \cite{KM14} and 
   %\begin{eqnarray*}
       %\epsilon_{1}&=&\epsilon_{1}(u, u', v; w)\\
       %\epsilon_{2}&=&\epsilon_{2}(u, v;w', w)\\
       %\epsilon_{3}&=&\epsilon_{3}(u, v;w', w)
   %\end{eqnarray*}
   %are $\pm1$-valued functions depending on the displayed resolutions. 
   %In particular, $\epsilon_{1}\equiv_{(2)}\epsilon_{3}$ for $|uv-w|_{1}=0$.
   %\qed
%\end{prop}
%\input{4_old}
\section{Proof of key lemmas}\label{Disjoint formula}

Now, we shall prove three key lemmas \cref{excision lemma}, \cref{disjoint lemma}, and \cref{compatibility with Khovanov}  from instanton theories in order to complete the proof of \cref{thm:main}.  
We first prove \cref{excision lemma}, which is restared here: 
\begin{prop}\label{exci e1}
Suppose $D_1$ and $D_2$ are pseudo diagrams. 
We have the commutative diagrams
\[
    \begin{split}
    \xymatrix@C=44pt{
        E^1(\CKh^\sharp(D_1))\otimes E^1(\CKh^\sharp(D_2))
        \ar[r]^-{E^1(\Psi)}
        \ar[d]_-{\gamma \otimes \gamma}
        & E^1(\CKh^\sharp(D_1 \sqcup D_2))
        \ar[d]^-{\gamma}\\
        \CKh(D_1^*) \otimes \CKh(D_2^*)
        \ar[r]^-{\cong}
        & \CKh(D_1^* \sqcup D_2^*)
    }
    \end{split}
\]
and
\[
    \begin{split}
    \xymatrix@C=44pt{
        qE^0(\CKh^\sharp(D_1))\otimes qE^0(\CKh^\sharp(D_2))
        \ar[r]^-{qE^0(\Psi)}
        \ar[d]_-{\gamma \otimes \gamma}
        & qE^0(\CKh^\sharp(D_1 \sqcup D_2))
        \ar[d]^-{\gamma}\\
        \CKh(D_1^*) \otimes \CKh(D_2^*)
        \ar[r]^-{\cong}
        & \CKh(D_1^* \sqcup D_2^*), 
    }
    \end{split}
\]
where $\gamma$ is a chain map introduced in \cref{prop:equal_to_Kh}. 
\end{prop}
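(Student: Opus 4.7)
The plan is to isolate the diagonal components $\Psi_{uv;uv}\colon C_u\otimes C_v\to C_{uv}$ of $\Psi$ on the associated graded of each filtration and identify them with the canonical tensor identification of tensor powers of $V$ via $\gamma$. Since, by \Cref{prop:ckh-isoms}(1), the Khovanov tensor isomorphism is literally the identity on underlying modules when sign assignments are chosen compatibly, the desired commutativity reduces to checking $\gamma_{uv}\circ\Psi_{uv;uv}=\gamma_u\otimes\gamma_v$ for every pair $(u,v)$.

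First I would bound the filtration shifts of each component $\Psi_{uv;w}$. Directly from $h|_{C_v}=-|v|_1+n_-$, the map $\Psi_{uv;w}\colon C_u\otimes C_v\to C_w$ shifts $h$-degree by exactly $|uv-w|_1\geq 0$. For the $q$-shift, \Cref{quantum ex} bounds the order of $\Psi_{uv;w}$ with respect to $Q$ from below by $\chi(S_{uv;w})+S_{uv;w}\cdot S_{uv;w}+\dim G_{uv;w}$; by \Cref{topological computation of excision} the trivial piece $S$ has $\chi(S)=S\cdot S=0$, while each inserted saddle drops $\chi$ by $1$ without affecting $S\cdot S$, and $\dim G_{uv;w}=|uv-w|_1$. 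Converting via $q=Q-|v|_1-n_++2n_-$ picks up an extra $|uv-w|_1$, yielding $\ord_q(\Psi_{uv;w})\geq |uv-w|_1$. Hence $\Psi_{uv;w}$ strictly raises both filtrations whenever $|uv-w|_1>0$, so on both $E^1$ and $qE^0$ (which share the same underlying chain complex by \cite[Lemma 10.1]{KM14}) the induced map of $\Psi$ equals $\bigoplus_{u,v}\Psi_{uv;uv}$.

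Next I would identify $\Psi_{uv;uv}$. Since $|uv-uv|_1=0$, the family $G_{uv;uv}$ collapses to a single metric and $\Psi_{uv;uv}$ is, up to the prefactor $(-1)^{s(u,v;uv)}=+1$, the singular instanton cobordism map of $(W,S)$ applied to unlinks at each end. The separating tori $T_1$, $T_2$ inside $W$ activate the excision principle of \cite{KM10,KM11u}: under excision this cobordism map becomes the canonical identification of tensor powers of the Frobenius algebra $V$ that is used to set up $\gamma$ in \Cref{prop:equal_to_Kh}. This gives $\gamma_{uv}\circ\Psi_{uv;uv}=\gamma_u\otimes\gamma_v$ and hence the commutativity of both squares.

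The main obstacle is the integer sign check: one must confirm that the orientations on $M_{\check{g}_{uv;uv}(0)}(\bar W,\bar S_{uv;uv};\alpha,\beta;\gamma)_0$ prescribed in \Cref{ori app}, combined with the prefactor $(-1)^{s(u,v;uv)}$, reproduce the sign conventions baked into $\gamma$ and into the tensor identification. I would reduce to the top-$Q$-degree generator $\bv_+^{\otimes r(D_{1,u})}\otimes\bv_+^{\otimes r(D_{2,v})}$, where the $0$-dimensional moduli space is explicit and the count is an unambiguous $\pm 1$ fixed by the product structure of the initial metric $\check{g}_{uv;uv}(0)$ across the excision neck, and then extend to arbitrary generators using the naturality of $I^\sharp$ under the multiplication and comultiplication operations of $V$.
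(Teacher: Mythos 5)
Your reduction to the diagonal components $\Psi_{uv;uv}$ is exactly the paper's first step (and the filtration-shift computation, modulo the paper's dimension bookkeeping in \Cref{qfilE1ofpsi}, lands in the same place: only $|uv-w|_1=0$ survives on both $E^1$ and $qE^0$). The real content — identifying $\Psi_{uv;uv}$ with the canonical tensor identification under $\gamma$ — is where your route diverges slightly from the paper's, and also where you leave a gap. Invoking "the excision principle" only tells you that the cobordism induces \emph{an} isomorphism of instanton homology groups; it does not by itself pin down that this isomorphism is the identity on $V^{\otimes r_1}\otimes V^{\otimes r_2}\to V^{\otimes(r_1+r_2)}$ in the bases coming from $\gamma$, which is what the diagram asserts. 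The paper makes this concrete by (i) first treating $D_1=D_2=\emptyset$, then (ii) realizing \emph{every} generator of $V^{\otimes r}$ as $I^\sharp(D(N))(1)$ where $D(N)$ is a dotted disjoint-cap cobordism, and (iii) reducing the claim $\gamma_{uv}\circ\Psi_{uv;uv}=\gamma_u\otimes\gamma_v$ to the single identity $I^\sharp(W,S)\circ I^\sharp(D(N))(1)=I^\sharp(D^*(N))(1)$, which follows from the fact that $D(N)\circ(W,S)$ and $(W,S^*)\circ D^*(N)$ are isotopic rel boundary. Your proposal of "reduce to the top generator and then extend using the naturality of $I^\sharp$ under multiplication and comultiplication" doesn't quite work as stated: $\Delta(\bv_+)=\bv_+\otimes\bv_-+\bv_-\otimes\bv_+$ is a sum, so $m$ and $\Delta$ alone cannot reach an arbitrary monomial $e_1\otimes\cdots\otimes e_r$ from $\bv_+^{\otimes r}$; you additionally need the counit/dotted-cap operation that kills $\bv_+$ and fixes $\bv_-$. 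That is precisely what the dots in the paper's $D(N)$ supply. Once you add that ingredient, your plan becomes essentially the paper's argument, and the sign check you flag is handled there by the orientation conventions of \Cref{ori app} together with the observation that $s(u,v;uv)=0$.
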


\begin{rem}
    The signing convention for tensor products of $\CKh$ is chosen so that the above two diagrams commute. Recall from \Cref{prop:equal_to_Kh} that we take sign assignments $s_1, s_2$ and $s_{12}$ for $\CKh(D^*_1), \CKh(D^*_2)$ and $\CKh(D^*_1 \sqcup D^*_2)$ respectively, so that the vertical $\gamma$'s are identities. The differential $d^\otimes$ on the tensor product $\CKh(D^*_1) \otimes \CKh(D^*_2)$ is defined as 
    \[
        d^\otimes(x \otimes y) = d_1 x \otimes y + (-1)^{|u|_1} x \otimes d_2 y
    \]
    for enhanced states $x \in V((D^*_1)_u)$ and $y \in V((D^*_2)_v)$. Note that 
    \[
        \gr_h(x) = |u|_1 - n_-(D^*_1) 
    \]
    so the above convention coincides with the standard one after modifying the differential $d_2$ of $\CKh(D^*_2)$ by multiplying $(-1)^{n_-(D^*_1)}$. 
\end{rem}

\begin{proof}

Let $K_1$ and $K_2$ be links in $\R^3_+$ and $\R^3_-$ respectively, where $\R^3_\pm$ is just a copy of $\R^3$. Let us denote by $S^3_+$ and $S^3_-$ their compactifications and regard $D_1$ and $D_2$ as diagrams of $K_1$ and $K_2$.  
With these labels, we regard the excision cobordism as 
\[
(W, S) :  (S^3_+, (D_1)_u \sqcup H )  \sqcup (S^3_+, (D_2)_v \sqcup H ) \to (S^3_+, H) \sqcup (S^3_-, (D_1\sqcup D_2)_w\sqcup H ),  
\]
where $H$ is the Hopf link for resolutions $u, v$ and $w$.

We first focus on the statement about homological grading. 
    For this first map, we need to calculate the %cobordism
    map 
    \[
    E^1 ( \Psi) = f_{uv,(u,v)}
    \]
   which is just the usual cobordism map 
    \[
    I ^\#_{(W, S)} : I^\sharp ((D_1)_u)  \otimes I^\sharp ((D_2)_v) \to I^\sharp((D_1)_u  \sqcup (D_2)_v). 
    \]
    In this case, the link cobordism $S_{uv;w}$ in $D^3_+\times I \sqcup D^3_-\times I$ is just the product cobordism from unlink $(D_1)_u \sqcup (D_2)_v$ in $D^3_+ \sqcup D^3_-$ to itself. 

   We consider two kinds of cobordism maps 
   \begin{align*}
      & I^\sharp([0,1] \times S^3, D^2): \Z \to I^\sharp_*(U_1)  \\ 
     &  I^\sharp([0,1]\times S^3, D^2, \cdot): \Z \to I^\sharp_*(U_1). 
   \end{align*}
   %The first one is the usual cobordism map for the trivially and properly embedded $D^2$ in $[0,1]\times S^3$ bounded by the unknot $U_1 \in \{1\} \times S^3$. The second map is the dotted cobordism map obtained by taking a point in $D^2$ and evaluating the first Chern class of the $U(1)$-universal bundle comes from the base point fibration for the point. See \cite{KM11u, KM21} for the details of the maps. 
   Then, if we write by $u_+, u_-$ the homogenous generators of $I^\sharp_*(U_1)$ whose degrees are $0$ and $2$, then 
   \begin{align}\label{gen rep}
   I^\sharp(D^2)(1) = u_+ \text{ and } 
   I^\sharp(D^2, \cdot) (1) = 2u_-
   \end{align}
   follow from \cite{KM11u}. 
    We first consider the trivial case $D_1=D_2 = \emptyset$. In this case, the excision cobordism $(W, S)$ has annuli connecting the Hopf links. Note that $I^\sharp(\emptyset) \cong \Z$ and the excision cobordism gives an isomorphism 
    \[
    I^\sharp (W, S) : \Z \cong I^\sharp(\emptyset) \otimes I^\sharp(\emptyset) \to I^\sharp(\emptyset) \cong \Z. 
    \]
    Therefore, the statement follows in this case.

    Next, we consider the general case. Suppose the numbers of components of $r((D_1)_u) = n_1 $ and $r((D_2)_v) = n_2$ for fixed resolutions $u$ and $v$ of $ D_1$ and $D_2$ respectively. 
We define the dotted cobordism 
\[
D(N) :=  \left(\bigcup_{i\in \{1, \cdots, n_1 \} } D^2_i \subset [0,1]\times S^3_{+}\right) \sqcup   \left(\bigcup_{j\in \{n_1+ 1, \cdots, n_1 + n_2 \} } D^2_j \subset [0,1]\times S^3_{-}\right)   
\]
for a given subset $N \subset \{1, \cdots, n_1+ n_2\}$ equipped with a dot on the components corresponding to $N  \subset \{1, \cdots, n_1+ n_2\}$, which is bounded by $(D_1)_u \sqcup (D_2)_v \subset (S^3_+ \sqcup S^3_-) \times \{1\}$. 
Let us also consider 
\[
D^*(N) := \left(\bigcup_{i\in \{1, \cdots, n_1+ n_2 \} } D^2_i \subset [0,1]\times S^3_- \right)
\]
again equipped with a dot on the components corresponding to $N  \subset \{1, \cdots, n_1+ n_2\}$, which is bounded by $(D_1)_u \sqcup (D_2)_v \subset  S^3_- \times \{1\}$. 
Then, we cap off the unknots by $D^2$:  
 \[
(W', S'):= D(N) \circ  (W, S) 
 \]
 which gives a cobordism from $(S^3_+, H)\sqcup (S^3_-,H)$ to $(S^3_+, H)\sqcup (S^3_-,(D_1)_u \sqcup (D_2)_v \sqcup H)$.
 From \eqref{gen rep}, we see that each generator in $I^\sharp( (D_1 )_u) \otimes I^\sharp ((D_2)_v )  \subset  \CKh^\sharp (D_1) \otimes \CKh^\sharp (D_2)$ over $\mathbb{Q}$ can be obtained 
\[
I^\sharp (D(N))  (1) \in I^\sharp( (D_1 )_u)\otimes I^\sharp ( (D_2)_v)  \cong \overbrace{H_*(S^2; \Z)\otimes \cdots \otimes H_*(S^2; \Z)}^{r(D_u)+ r(D_v) }
\]
for a choice of $N \subset \{ 1, \cdots, n_1 + n_2\}$. Again from \eqref{gen rep}, each generator in $I^\sharp( (D_1)_u \sqcup (D_2)_v)   \subset  \CKh^\sharp (D_1\sqcup D_2)$ can be obtained as 
\[
I^\sharp (D^*(N))  (1) \in I^\sharp((D_1)_u \sqcup (D_2)_v)  \cong \overbrace{H_*(S^2; \Z)\otimes \cdots \otimes H_*(S^2; \Z)}^{r(D_u)+ r(D_v) }
\]
for a suitable choice of $N \subset \{ 1, \cdots, n_1 + n_2\}$.
So, it is sufficient to see
\[
I^\sharp ((W, S))  \circ I^\sharp (D(N))(1) = I^\sharp (D^*(N))(1), 
\] 
which follows easily from isotopy invariance of $I^\sharp$ since $(W', S')$ and $(W, S^*) \circ D^*(N)$ are smoothly isotopic rel boundary under the identification $W \cong W'$, where $S^*$ is the components of $S_{uv,w}$ which connects the Hopf links. This ensures the $E^1 (\Psi )$ coincides with the disjoint sum formula in Khovanov theory described in \cref{prop:ckh-isoms}.
%This completes the first case. 
This completes to show that the first diagram is commutative.
For the second case of quantum grading, since $\Psi$ is quantum grading $\geq 0$, one can see 
\[
qE^0(\Psi) = f_{uv, (u,v)}. 
\]
Hence, the commutativity of the second diagram follows from the argument parallel to the first claim.
\end{proof}

\subsection{Proof of \cref{disjoint lemma}}
Next, we shall give a proof of \cref{disjoint lemma}, which reduces to prove the following two lemmas: 
\begin{lem} \label{012handle}
Let $D_1$, $D_2$, and $D_1'$ be pseudo-diagrams, and $S \colon D_1 \to D'_1$ be a diagrammatic $0, 1, 2$-handle  attaching or an isotopy supported on the complement of all the crossings. 
Then, for the disjoint union 
\[(S \sqcup [0,1]\times D_2) \colon D_1 \sqcup D_2 \to D'_1 \sqcup D_2
\]
the diagram
\[
    \xymatrix@C=40pt{
        E^{1}(\CKh^\sharp(D_1)) \otimes E^{1}( \CKh^\sharp(D_2))
        \ar[r]^-{E^{1}(\phi^{KM}_S )\otimes \id}
        \ar[d]_-{E^{1}(\Psi)}
        & E^1(\CKh^\sharp(D'_1)) \otimes E^{1}( \CKh^\sharp(D_2))
        \ar[d]^-{E^{1}(\Psi')}\\
       E^{1}( \CKh^\sharp(D_1 \sqcup D_2))
        \ar[r]^-{E^{1}(\phi^{KM}_{S \sqcup ([0,1]\times D_2)})}
        & E^1(\CKh^\sharp(D'_1 \sqcup D_2))
    }
\]
is commutative. 
\end{lem}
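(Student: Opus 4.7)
The plan is to reduce the commutativity to a statement about ordinary singular instanton cobordism maps on $I^\sharp$ and then invoke functoriality. The key observation is that on the $E^1$-page both $\Psi$ and $\phi^{KM}_S$ are resolution-diagonal: for $\Psi_{uv;w}$ the $h$-order equals $|uv-w|_1$, so only the $|uv-w|_1=0$ terms contribute; for the handle map (or isotopy map) $\phi^{KM}_S$ whose support is disjoint from all crossings, the only components surviving in $E^1$ are those preserving the resolution on the $D_1$-side (with identity on $D_2$). Consequently, both compositions in the square split as direct sums indexed by pairs of resolutions $(u,v)$, and each summand is a composition of ordinary $I^\sharp$ cobordism maps of pairs.

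For fixed resolutions $u$ and $v$, the counterclockwise composition $E^1(\phi^{KM}_{S \sqcup [0,1]\times D_2}) \circ E^1(\Psi)$ corresponds to the 4-dimensional cobordism of pairs obtained by first applying the excision $(W, S_{uv;(u,v)})$ and then the product $([0,1]\times S^3, S_u \sqcup [0,1]\times(D_2)_v \sqcup [0,1]\times H)$. The clockwise composition $E^1(\Psi') \circ (E^1(\phi^{KM}_S)\otimes \id)$ corresponds to first applying the disjoint product cobordism $(S_u \sqcup [0,1]\times H) \sqcup ([0,1]\times((D_2)_v \sqcup H))$ and then the excision $(W', S'_{uv;(u,v)})$. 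I would then argue that these two composite pairs are diffeomorphic rel boundary: the underlying 4-manifolds agree because the cut-and-paste construction of $W$ in \Cref{Excision cobordism map} uses only a neighborhood of $\infty \in S^3$ (where the Hopf link $H$ lives), which is disjoint from the support of $S$; and the embedded surfaces can be isotoped through the excision region because $S$ is supported in a small 3-ball disjoint from both the crossings and the gluing tori $T_i = \partial V_i^+$ used to build $W$. Thus each surface can be vertically translated past the excision slab without colliding with other components.

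Granting this geometric identification, the equality of the two summands follows from the composition law and isotopy invariance of the cobordism maps on $I^\sharp$ (cf.\ \Cref{subsection:Review of instanton knot homology}). The main obstacle will be verifying the sign conventions: one must check that $s(u,v;(u,v))=0$ (immediate from the formula) and that the orientation conventions on the parametrized moduli spaces $G^T_{uv;w}$ are compatible with vertical translation of $S$ through $W$. Both are essentially bookkeeping using the framework of \Cref{ori app}, but verifying that the fiber orientations of the metric families agree under neck stretching on either side of the handle is the most delicate point. The isotopy case is formally identical, since the trace of an isotopy supported away from crossings gives a cobordism of pairs diffeomorphic to a smooth cylinder, which trivially commutes with excision up to isotopy.
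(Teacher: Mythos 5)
Your proposal takes a genuinely different route from the paper's. You reduce the statement to functoriality and isotopy invariance of $I^\sharp$: on $E^1$ both vertical maps split by resolution pairs $(u,v)$ (since $\ord_h(\Psi_{uv;w}) = |uv-w|_1$, only the diagonal $w=(u,v)$ terms survive, and the handle/isotopy map preserves resolutions on $E^1$ up to the auxiliary crossing), and each summand is a composition of two ordinary singular-instanton cobordism maps whose underlying cobordisms of pairs are isotopic rel boundary, because the handle/isotopy is supported away from the crossings and away from $\infty$ where the excision surgery takes place. The paper instead proves the relation by a direct gluing argument: for the $1$-handle it adds an auxiliary crossing $c_*$, packages $\phi^{KM}_{h^1}$ as a component of the cube differential, and counts signed boundary points of the $1$-dimensional parametrized moduli space $\breve{M}^+_{uv;w}(\alpha,\beta;\gamma)_1$; for isotopies it specializes \Cref{excision_T} to $|uv-w|_1=0$; the $0$- and $2$-handle cases are similar.

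There is, however, a genuine gap in your argument. The lemma asserts \emph{exact} commutativity of the diagram on $E^1$, and the paper's gluing proof delivers exactly that. Your reduction only yields commutativity \emph{up to chain homotopy}: the composition law and isotopy invariance you invoke are statements about the induced maps on $I^\sharp$ (homology), and at chain level, two isotopic cobordisms of pairs — computed with a priori different families of metrics and perturbations, and with the excision neck stretched on one side of the handle versus the other — give maps that are only chain homotopic, not equal. In the paper's argument the strict equality is not formal: it comes from the observation that certain codimension-one faces of the compactified $1$-dimensional family moduli space are empty precisely because the end links are pseudo-diagrams (the resolved links are unlinks), so the chain homotopy term that would interpolate between the two compositions vanishes. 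Your sketch does not recover this vanishing, and nothing in the functoriality package supplies it. Since the paper's downstream use of this lemma (in \Cref{disjoint lemma} and its corollary) only requires commutativity up to filtered chain homotopy, your conclusion would in fact suffice for the paper's main goal; but as a proof of \Cref{012handle} as stated it is incomplete. A secondary point, which you do flag yourself: the sign verification is not merely cosmetic bookkeeping — checking $s(u,v;(u,v))=0$ is easy, but matching the orientations of the metric families through the isotopy is precisely what the conventions in \Cref{ori app} are engineered to control, and your approach would need to reproduce that signed identification independently.
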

\begin{proof}
 First, we consider the case when $\phi^{KM}_{S}$ is a map associated to the 1-handle attachment.
In this situation, let us add crossing $c_{*}$ on the diagram $D_{1}$, and regard the $1$-handle attachment operation as a cobordism map induced from the change of  $1$-resolution to $0$-resolution for the crossing $c_{*}$.
More specifically, the surface $T$ inside the cobordism is chosen as a standard link cobordism $S_{uw}$ such that $|u-w_{1}|_{\infty}=1$, $u(c_{*})=1$, and $ w_{1}(c_{*})=0 $ for a specified crossing $c_{*}$.
For $i\in \{0, 1\}$ and a diagram $D$ inside $S^3$ with $N+1$ crossings,  we 
recall the notation:
\[CKh^{\sharp}_{i}(D):=\bigoplus_{\substack{u\in \{0, 1\}^{N+1}\\ u(c_{*})=i}}C^{\sharp}(D_{u})\]

Then, we will prove the commutativity of the diagram:
\[
    \xymatrix@C=40pt{
        E^{1}(\CKh^\sharp_{1}(D_1)) \otimes E^{1}( \CKh^\sharp(D_2))
        \ar[r]^-{E^{1}(\phi^{KM}_S )\otimes \id}
        \ar[d]_-{E^{1}(\Psi)}
        & E^1(\CKh^\sharp_{0}(D_1)) \otimes E^{1}( \CKh^\sharp(D_2))
        \ar[d]^-{E^{1}(\Psi')}\\
       E^{1}( \CKh^\sharp_{1}(D_1 \sqcup D_2))
        \ar[r]^-{E^{1}(\phi^{KM}_{S \sqcup ([0,1]\times D_2)})}
        & E^1(\CKh^\sharp_{0}(D'_1 \sqcup D_2)).
    }
\]

 To see this, consider the $1$-dimensional compactified moduli space $\breve{M}^{+}_{uv;w}(\alpha, \beta;\gamma)_1$ and count the oriented boundary.
 Here $u(c)v(c)=w(c)$ except for $c=c_{*}$.
The boundary consists of four types of faces as in Figure \eqref{fig:fourends}. 
In particular, two of these faces are empty since we assume that the links at the ends are given by pseudo-diagrams.
Hence, the counting of the reminded boundary faces gives
\[\Psi_{u'v;w}f_{uu'}-f_{w'w}\Psi_{uv;w'}=0\]
where $uv=w'$ and $u'v=w$.
The statement for $1$-handles follows from this relation.
 
%-------------------------------------------------
Next, we discuss the effect of isotopy.
 Let $T \colon D_1 \to D'_1$ be a trace of isotopy disjoint from crossings. 
Then, for the disjoint union 
\[(T \sqcup [0,1]\times D_2) \colon D_1 \sqcup D_2 \to D'_1 \sqcup D_2
\]
we claim that the diagram
\[
    \xymatrix@C=40pt{
        E^{1}(\CKh^\sharp(D_1)) \otimes E^{1}(\CKh^\sharp(D_2))
        \ar[r]^-{E^{1}(\phi^{KM}_T) \otimes \id}
        \ar[d]_-{E^{1}(\Psi)}
        & E^{1}(\CKh^\sharp(D'_1)) \otimes E^{1}(\CKh^\sharp(D_2))
        \ar[d]^-{E^{1}(\Psi')}\\
        E^{1}(\CKh^\sharp(D_1 \sqcup D_2))
        \ar[r]^-{E^{1}(\phi^{KM}_{T \sqcup ([0,1]\times D_2)})}
        & E^{1}(\CKh^\sharp(D'_1 \sqcup D_2))
    }
\]
is commutative.
This follows from 
the formula in \Cref{excision_T} to the case $|uv-w|_{1}=0$.

The proofs for $0$ and $2$-handles attaching are similar.
\end{proof}

\begin{lem}\label{Add-Drop}
Let $D'_1$ be the pseudo diagram obtained from the dropping crossings of another pseudo diagram $D_1$. We denote by 
\[
\Phi^\sharp : \CKh^\sharp(D_1) \to \CKh^\sharp(D'_1) \text{ and } \Psi^\sharp : \CKh^\sharp(D'_1)  \to \CKh^\sharp(D_1)
\]
Kronheimer--Mrowka's dropping and adding maps, respectively. 
Then the diagrams
\begin{align*}
    \xymatrix@C=40pt{
        E^{1}(\CKh^\sharp(D_1)) \otimes E^{1}( \CKh^\sharp(D_2))
        \ar[r]^-{E^{1}(\Phi^\sharp )\otimes \id}
        \ar[d]_-{E^{1}(\Psi)}
        & E^{1}(\CKh^\sharp(D'_1)) \otimes E^{1}(\CKh^\sharp(D_2))
        \ar[d]^-{\Psi}\\
        E^{1}(\CKh^\sharp(D_1 \sqcup D_2))
        \ar[r]^-{E^{1}(\Phi^{\sharp})}
        & E^{1}(\CKh^\sharp(D'_1 \sqcup D_2))
    } \text{ and } \\
    \xymatrix@C=40pt{
        E^{1}(\CKh^\sharp(D_1')) \otimes E^{1}( \CKh^\sharp(D_2))
        \ar[r]^-{E^{1}(\Psi^\sharp )\otimes \id}
        \ar[d]_-{E^{1}(\Psi)}
        & E^{1}(\CKh^\sharp(D_1)) \otimes E^{1}(\CKh^\sharp(D_2))
        \ar[d]^-{\Psi}\\
        E^{1}(\CKh^\sharp(D'_1 \sqcup D_2))
        \ar[r]^-{E^{1}(\Psi^{\sharp})}
        & E^{1}(\CKh^\sharp(D_1 \sqcup D_2))
    }
\end{align*}
are commutative up to chain homotopy of order $(\ord_h, \ord_q) \geq (0,0)$.

\end{lem}

\begin{proof}

We first consider an operation of dropping a crossing.
Recall that the dropping map 
\[\Phi^{\sharp}:CKh_{1}^\sharp(D_1)\oplus \CKh_{0}^\sharp(D_1)\rightarrow \CKh_{-1}^\sharp(D_1)\] is defined by
$\Phi=\left[{F}_{1,-1}, {F}_{0,-1}\right]$
where ${F}_{ij}$ are maps of cube complexes induced from a standard cobordism $S_{uw}$ such that $u(c)=i$ and $w(c)=j$.
For simplicity, let us write: 

\[
\begin{aligned}
C^{\otimes}_{*} &:= \CKh_{*}^\sharp(D_1) \otimes \CKh^\sharp(D_2), \\
C^{\text{disj}}_{*} &:= \CKh_{*}^\sharp(D_1 \sqcup D_2).
\end{aligned}
\]
for $*=1,0, -1$.
 Let $\mathrm{Id}_{i}$ be the identity map on the cube complex $\CKh^{\sharp}(D_{i})$ where $i=1, 2$ and we write
 $\Phi^{\otimes}=\left[{F}^{\otimes}_{1,-1}, {F}^{\otimes}_{0,-1}\right]$
 where ${F}^{\otimes}_{ij}:={F}_{ij}\otimes \mathrm{Id}_{2}$.
 Furthermore, we write the associated dropping map 
 \[
 \Phi^{\textrm{disj}}: C^{\text{disj}}_{1}\oplus C^{\text{disj}}_{0}\rightarrow C^{\text{disj}}_{-1}
 \]
 by
 $\Phi^{\textrm{disj}}=\left[{F}^{\textrm{disj}}_{1,-1}, {F}^{\textrm{disj}}_{0,-1}\right]$
 where ${F}^{\textrm{disj}}_{ij}$ are maps on cube complexes induced from the standard cobordism
 \[S_{uw}\sqcup ([0,1]\times D_{2}) \subset [0, 1]\times S^3\]
where $u(c)=i$ and $w(c)=j$. 
The differential on the chain complex $C^{\otimes}_{1}\oplus C^{\otimes}_{0}$
has a form 

\[
 \begin{pmatrix}
    F^{\otimes}_{1,1}+\epsilon  d^{\otimes}_{2}& 0\\
    F^{\otimes}_{1,0}&F^{\otimes}_{0,0}+\epsilon d^{\otimes}_{2}
\end{pmatrix}
\]
where $d^{\otimes}_{2}:=\id_{1}\otimes d^\sharp$, and $\epsilon$ is the sign of graded tensor product with respect to the $\Z/4$-grading on cube complexes.
Note that the map
\[\Psi: C^{\otimes}_{1}\oplus C^{\otimes}_{0}\rightarrow C^{\textrm{disj}}_{1}\oplus C^{\textrm{disj}}_{0}\]
induced from the excision cobordism has the form 
\[
 \Psi=
 \begin{pmatrix}
    \Psi_{1,1}& 0\\
    \Psi_{1,0}& \Psi_{0,0}
\end{pmatrix}
\]
with respect to the decomposition.
For the convenience, let us write $\Psi_{-1,-1}$ for the corresponding map $C^{\otimes}_{-1}\rightarrow C^{\textrm{disj}}_{-1}$.
We will prove that the following diagram commutes up to chain homotopy.
\[
    \xymatrix@C=40pt{
        E^{1}(C^{\otimes}_{1}\oplus C^{\otimes}_{0})
        \ar[r]^-{E^{1}(\Phi^{\otimes})}
        \ar[d]_-{E^{1}(\Psi)}
        & E^{1}(C^{\otimes}_{-1})
        \ar[d]^-{E^{1}(\Psi_{-1,-1})}\\
        E^{1}(C^{\text{disj}}_{1}\oplus C^{\text{disj}}_{0})
        \ar[r]^-{E^{1}(\Phi^{\textrm{disj}})}
        & E^{1}(C^{\text{disj}}_{-1})
    }
\]

To introduce a desired chain homotopy map, we define a filtered map $H: C^{\otimes}_{1}\oplus C^{\otimes}_{0}\rightarrow C^{\textrm{disj}}_{-1}$.
Then, we define a chain homotopy \[H=\left[ \Psi_{1,-1}, \Psi_{0,-1}\right]: C^{\otimes}_{1}\oplus C^{\otimes}_{0}\rightarrow C^{\textrm{disj}}_{-1}\] by the following way:
\[\Psi_{*,-1}:=\sum_{\substack{\mathbf{1}\geq uv\geq w\geq \mathbf{0},\\u(c_{*})=*, w(c_{*})=-1}}\Psi_{uv;w}.\]
 The proof of the statement \Cref{Add-Drop} reduces to showing the pair of equalities: 
\begin{equation}
\begin{split}
    E^{1}(\Psi_{-1,-1}) \circ E^{1}({F}^{\otimes}_{1,-1}) 
    - E^{1}({F}^{\textrm{disj}}_{1,-1}) \circ E^{1}(\Psi_{1,1}) 
    - E^{1}({F}^{\textrm{disj}}_{0,-1}) \circ E^{1}(\Psi_{1,0})\\ 
    = E^{1}(\Psi_{0,-1}) \circ E^{1}({F}_{1,0})
\end{split}
\label{htpy rel (1)}
\end{equation}
\begin{equation}
\begin{split}
    E^{1}(\Psi_{-1,-1}) \circ E^{1}({F}^{\otimes}_{0,-1}) 
    - E^{1}({F}^{\textrm{disj}}_{0,-1}) \circ E^{1}(\Psi_{0,0}) = 0
\end{split}
\label{htpy rel (2)}
\end{equation}

 The equation \eqref{htpy rel (1)} follows from Proposition \ref{general rel} to the case $u(c_{*})=1$, $w(c_{*})=-1$, and $u(c)v(c)=w(c)$ for $c\neq c_{*}$.
The proof of the chain homotopy relation (\ref{htpy rel (2)}) is similar to that of Proposition \ref{012handle}.

\begin{figure}
    \centering
    \includegraphics[width=1.0\linewidth]{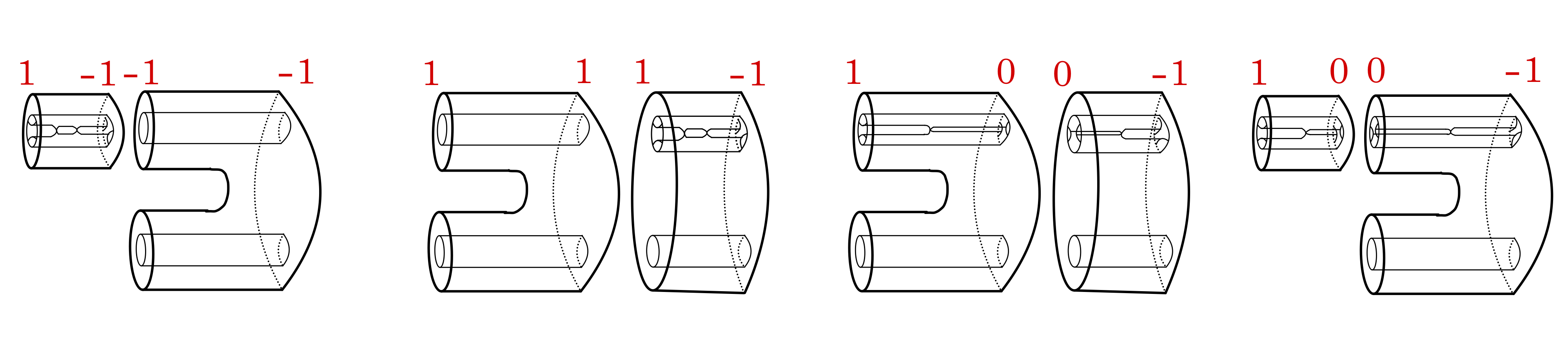}
    \caption{Ends of the moduli spaces}
    \label{fig:fourends}
\end{figure}

The proof for adding crossings is similar. 
\end{proof}

Now, we can give a proof of \cref{disjoint lemma}. 

\begin{cor}[\cref{disjoint lemma}] 
Let $S$ be a %given 
fundamental cobordism $S:D_{1}\rightarrow D'_{1}$ which is given by $RI, RI^{-1}, RII, RII^{-1} , RIII, h^0, h^1, h^2$, or a \text{planar isotopy}.
Let $D_2$ be any link diagram.
Consider the associated maps
\[
\phi^{KM}_{S}:CKh^{\sharp}(D_{1})\rightarrow \CKh^{\sharp}(D'_{1})
\]
and
\[\phi^{KM}_{S\sqcup [0,1]\times D_{2}}:CKh^{\sharp}(D_{1}\sqcup D_{2})\rightarrow \CKh^{\sharp}(D'_{1}\sqcup D_{2}).\]
Then we have a diagram 
\[
    \xymatrix@C=40pt{
        E^{1}(\CKh^\sharp(D_1)) \otimes E^{1}(\CKh^\sharp(D_2))
        \ar[r]^-{E^{1}(\phi^{KM}_{S})\otimes \id}
        \ar[d]_-{E^{1}(\Psi)}
        & E^{1}(\CKh^\sharp(D'_1)) \otimes E^{1}(\CKh^\sharp(D_2))
        \ar[d]^-{E^{1}(\Psi')}\\
        E^{1}(\CKh^\sharp(D_1 \sqcup D_2))
        \ar[r]^-{E^{1}(\phi^{KM}_{S \sqcup ([0,1]\times D_2)})}
        & E^{1}(\CKh^\sharp(D'_1 \sqcup D_2))
    }
\]
is commutative up to chain homotopy of order $(\ord_h, \ord_q) \geq (0,0)$.
\end{cor}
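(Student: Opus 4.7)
The plan is to dispatch each fundamental cobordism type by reducing to one of the two lemmas already established in this section, namely \Cref{012handle} and \Cref{Add-Drop}, and to paste the resulting homotopy-commutative squares together. The key observation is that in the Kronheimer--Mrowka setup reviewed in \Cref{Preliminaries for instanton theory}, every $\phi^{KM}_S$ is built from three elementary pieces: (i) an isotopy trace $\phi^{KM}_T$ whose support is disjoint from a chosen set of crossings, (ii) an add/drop-crossing map $\Psi^\sharp$ or $\Phi^\sharp$, and (iii) a $0$/$1$/$2$-handle map realized as a component of a cube differential. So it suffices to prove the square commutes (up to homotopy of order $(\ord_h,\ord_q)\ge(0,0)$) for each of these three types of atomic cobordisms, and then chain the squares.

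The Morse moves $h^0,h^1,h^2$ and the planar isotopy case are already handled directly by \Cref{012handle}; the proof given there treats $1$-handles as the $1$-resolution-to-$0$-resolution component of the differential on $\CKh^\sharp(D'')$ with an added crossing, and the $0$- and $2$-handle cases are strictly analogous (using the standard link cobordism $S_{uw}$ with $u,w$ differing only at the relevant crossing). For planar isotopies the commutativity follows from applying \Cref{excision_T} in the degree-zero case $|uv-w|_1=0$, exactly as in the second half of the proof of \Cref{012handle}. The add/drop-crossing atomic maps are handled by \Cref{Add-Drop}.

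It remains to assemble the Reidemeister cases. For $RI$ we have $\phi^{KM}_{RI}=\Psi^\sharp\circ\phi^{KM}_T$, so the diagram in question factors as the vertical concatenation of the square for the trace $T$ (commutative up to homotopy by \Cref{012handle}) followed by the square for the add-crossing map $\Psi^\sharp$ (commutative up to homotopy by \Cref{Add-Drop}); pasting gives commutativity up to homotopy for $\phi^{KM}_{RI}$. The case $RI^{-1}$ is the same with $\Phi^\sharp$ in place of $\Psi^\sharp$. For $RII$ and $RII^{-1}$ one adds/drops two crossings and inserts an isotopy between them, so the square decomposes into three blocks, each covered as before. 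For $RIII$ one uses the decomposition in \cite[Figure~2]{KM14} into a sequence of add/drop-crossing and isotopy moves, and again pastes the corresponding squares. In all cases the tensor-with-$\id$ construction on the left side and the $([0,1]\times D_2)$ insertion on the right side respect this factorization, because the excision cobordism $W$ is inserted disjointly from the support of $S$.

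The only potentially delicate point is the bookkeeping of chain homotopies: pasting two squares that each commute only up to homotopy produces a composite homotopy whose $(\ord_h,\ord_q)$ order must remain $\ge(0,0)$. This is automatic since every homotopy produced by \Cref{012handle} and \Cref{Add-Drop} has order $\ge(0,0)$, and the quantities $\chi(S\sqcup([0,1]\times D_2))=\chi(S)$ and $(S\sqcup([0,1]\times D_2))\cdot(S\sqcup([0,1]\times D_2))=S\cdot S$ are preserved under the disjoint insertion so the same order estimate applies to the composite cobordism. Thus the main obstacle is not conceptual but combinatorial: carefully verifying that the homotopies from the two lemmas can be summed (using the signs $\epsilon_i$ appearing in \Cref{excision_T}) without introducing order drops, and I expect this to be straightforward but sign-sensitive, mirroring the case analysis already carried out in \Cref{Add-Drop}.
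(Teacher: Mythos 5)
Your proposal is correct and mirrors the paper's own proof almost exactly: both decompose $\phi^{KM}_S$ into atomic factors (planar isotopy traces, add/drop-crossing maps, handle maps), invoke \Cref{012handle} and \Cref{Add-Drop} to get commutativity of each atomic square up to filtered chain homotopy of order $\geq (0,0)$, and then paste the squares. The paper states this more tersely — it does not spell out the individual Reidemeister factorizations as you do, and simply appeals to the fact that $\phi^{KM}_S$ is by construction a composition of the atomic maps — but the underlying argument is the same.
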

\begin{proof}
Let 
\[\phi:CKh^\sharp(D_1)\rightarrow \CKh^\sharp(D'_1)\]
and
\[\phi':CKh^\sharp(D'_1)\rightarrow \CKh^\sharp(D''_1)\]
be maps given by 0, 1, 2-handle attachment, planar isotopy, or add/drop operations.
In addition, let $\phi^{\textrm{disj}}$  and $\phi'^{\textrm{disj}}$ denote corresponding maps on the cube complexes for link diagrams by taking a disjoint union with $D_{2}$.  
We consider the composition of diagrams
\[
    \xymatrix@C=40pt{
        E^{1}(\CKh^\sharp(D_1)) \otimes E^{1}(\CKh^\sharp(D_2))
        \ar[r]^-{E^{1}(\phi)\otimes \id}
        \ar[d]_-{E^{1}(\Psi)}
        & E^{1}(\CKh^\sharp(D'_1)) \otimes E^{1}(\CKh^\sharp(D_2))
         \ar[r]^-{E^{1}(\phi')\otimes \id}
        \ar[d]^-{E^{1}(\Psi')}
        & E^{1}(\CKh^\sharp(D''_1)) \otimes E^{1}(\CKh^\sharp(D_2))
        \ar[d]^-{E^{1}(\Psi'')}
        \\
        E^{1}(\CKh^\sharp(D_1 \sqcup D_2))
        \ar[r]^-{E^{1}(\phi^{\textrm{disj}})}
        & E^{1}(\CKh^\sharp(D'_1 \sqcup D_2))
        \ar[r]^-{E^{1}(\phi'^{\textrm{disj}})}
        & E^{1}(\CKh^\sharp(D''_1 \sqcup D_2)).
    }
\]
 
\Cref{012handle} and \Cref{Add-Drop} state that each square commutes up to filtered chain homotopy.
Hence, the composed square diagram also commutes up to filtered chain homotopy.
The statement follows since the map $\phi^{KM}_{S}$ for any fundamental cobordism $S$ is defined as (a composition of) maps associated to 0, 1, 2-handle attachment, planar isotopy, add/drop-operation.

\end{proof}

\subsection{Proof of \cref{compatibility with Khovanov}}
Finally, we shall prove \cref{compatibility with Khovanov}, which was the final key lemma to see \cref{thm:main}. 
\begin{lem}[\cref{compatibility with Khovanov}]
If $S$ is either of $0,1,2$-handle attachments, then the diagrams (\ref{eq:h-filt}) and (\ref{eq:q-filt}) are commutative.
\end{lem}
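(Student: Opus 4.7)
The plan is to treat the three elementary handle attachments separately, reducing each to a local computation using the algebraic tools already developed in this section.

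\emph{The $1$-handle case.} By the construction recalled in \cref{Preliminaries for instanton theory}, $\phi^{KM}_{h^1}$ is a specific component of the cube differential of $\CKh^\sharp(D'')$, where $D''$ is obtained from $D$ by inserting an auxiliary crossing $c_*$ whose $1$-resolution recovers $D$ and whose $0$-resolution gives $D'$. Decomposing $\CKh^\sharp(D'') = \CKh^\sharp_1(D'') \oplus \CKh^\sharp_0(D'')$ according to the $c_*$-coordinate, the summands identify canonically with $\CKh^\sharp(D)$ and $\CKh^\sharp(D')$, and $\phi^{KM}_{h^1}$ is precisely the off-diagonal part of $d^{\sharp 1}$ between them. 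By \cref{prop:equal_to_Kh}, $\gamma$ intertwines $d^{\sharp 1}$ with the Khovanov differential on $\CKh((D'')^*)$, and since mirroring swaps the two resolutions at $c_*$, the matching off-diagonal component on the Khovanov side is exactly the merge or split corresponding to the band surgery, i.e.\ $\Kh(S^*)$. Hence both (\ref{eq:h-filt}) and (\ref{eq:q-filt}) commute in this case.

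\emph{The $0$- and $2$-handle cases.} Write $S = \id_D \sqcup S_0$, where $S_0 \colon \emptyset \to U_0$ is the standard cap disk (for $h^0$) or $S_0 \colon U_0 \to \emptyset$ is the standard cup disk (for $h^2$). Combining \cref{disjoint lemma} with \cref{excision lemma} and \cref{exci e1}, commutativity of (\ref{eq:h-filt}) and (\ref{eq:q-filt}) for $S$ reduces to the analogous diagrams for the local model
\[
\phi^{KM}_{S_0} \colon \CKh^\sharp(\emptyset) \longleftrightarrow \CKh^\sharp(U_0).
\]
For the cap, the computation $\phi^{KM}_{D^2}(1) = \bv_+$ already appeared in the proof of \cref{exci e1} and agrees with the Khovanov unit $\iota$. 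For the cup, the dual computation $\phi^{KM}_{\bar D^2}(\bv_+) = 0$, $\phi^{KM}_{\bar D^2}(\bv_-) = 1$ follows from time-reversal and the adjointness of cap and cup via the Frobenius trace form on $V$ (compare \cite{KM11u}); this matches the Khovanov counit $\epsilon$. Under $\gamma$ these recover $\Kh((h^0)^*)$ and $\Kh((h^2)^*)$, respectively.

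\emph{Main obstacle.} The principal technical obstacle is ensuring that the reduction via excision for the $0$- and $2$-handle cases is compatible with the $q$-filtration: since $\phi^{KM}_{h^0}$ and $\phi^{KM}_{h^2}$ have exact $q$-order $+1 = \chi(S_0)$, one must verify that the second square of \cref{excision lemma} together with \cref{disjoint lemma} identifies the correct $q$-leading parts, not merely that it commutes on the common underlying complex $(\CKh^\sharp, d^{\sharp 1})$. A secondary concern is sign-tracking in the $1$-handle identification, which is absorbed into the sign-assignment choice fixed after \cref{prop:equal_to_Kh} and the orientation conventions in the parametrized moduli spaces over the disk cobordisms.
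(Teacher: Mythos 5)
Your proof is correct and takes essentially the same route as the paper's: the $0$- and $2$-handle cases are reduced to a local cap/cup model by combining \cref{disjoint lemma} with \cref{exci e1}, and the $1$-handle map is identified as the appropriate component of the cube differential of the enlarged cube $\CKh^\sharp(D'')$, matched to the Khovanov saddle via $\gamma$. The paper is more terse --- for the $1$-handle case it simply cites \cite[Proposition 8.11]{KM11u} (where $I^\sharp(S)$ is shown to coincide with the Khovanov $1$-handle map) rather than rederiving the identification from \cref{prop:equal_to_Kh}, and it leaves the explicit cap/cup computations implicit, whereas you usefully spell them out.
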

\begin{proof}
The $0$ and $2$-handle cases follow from \cref{disjoint lemma} combined with \cref{exci e1}. 
Thus, we focus on the cobordism map for the $1$-handle case. 
Suppose $S: D \rightarrow D'$ is a diagrammatic $1$-handle attachment. Then, one can easily check $E^1(\phi^\sharp_S) = I^\sharp(S)$. It %has already been checked
is showed in \cite[Proposition 8.11]{KM11u} that $I^\sharp(S)$ coincides with the corresponding map in Khovanov homology. This completes the proof. 

\end{proof}

\appendix
\section{Orientations of moduli spaces}\label{ori app}
In this section, we briefly introduce several notations to describe the orientation of moduli spaces in our construction. Again, we follow Kronheimer--Mrowka's formulation \cite{KM11u}.

For a given finite-dimensional vector space $V$ over $\R$,  we can associate its \textit{determinant}  $\textrm{det}V$ by setting 
$\textrm{det}V:=\Lambda^{\max}(V)$.
For two vector spaces $V$ and $W$, there is a natural isomorphism
\begin{equation}\label{det-isom}
 \textrm{det}V\otimes \textrm{det}W\cong \textrm{det}(V\oplus  W)   
\end{equation}
which is given by $v\otimes w \mapsto v \wedge w$.
We associate the set of orientations $\Lambda_{V}:=\{o_{V}, -o_{V}\}$ with a vector space, where $o_{V}$ is a specified orientation of $V$ and $-o_{V}$ is another orientation on $V$.
Note that $\Lambda_{V}\otimes_{\Z/2} \Lambda_{W}$ forms a two-point set, and the natural isomorphism (\ref{det-isom}) induces 
\[\Lambda_{V\oplus W}\cong  \Lambda_{V}\otimes_{\Z/2} \Lambda_{W}.\]
Under this natural identification, we can regard the product $o_{V}\otimes_{\Z/2} o_{W}$ as an element of $\Lambda_{V\oplus W}$, where $o_{V}\in \Lambda_{V}$ and $o_{W}\in \Lambda_{W}$.
Moreover, there is a natural identification $\Lambda_{V\oplus W}=\Lambda_{W\oplus V}$, and hence we can compare two elements $o_{V}\otimes_{\Z/2}o_{W}$ and $o_{W}\otimes_{\Z/2}o_{V}$.
In particular, we have 
\begin{eqnarray}\label{commori}
    o_{V}\otimes_{\Z/2} o_{W}=(-1)^{\mathrm{dim}(V)\mathrm{dim}(W)}o_{W}\otimes_{\Z/2}o_{V}.
\end{eqnarray}

For a finite-dimensional orientable connected smooth manifold $M$, the determinant bundle $\textrm{det}(TM)$ is trivial, and the choice of the orientation of $\textrm{det}(TM)$ is called an orientation of $M$.
For a finite-dimensional, orientable, connected smooth manifold $M$, let us denote $\Lambda(M)$ by the two-point set of the orientations of $M$.

In the context of gauge theory,  the orientations of moduli spaces are given by index theory.
Let $(W, S, \bf{P})$ be a cobordism of pairs from $(Y_{0}, K_{0}, \bf{P}_{0})$ to $(Y_{1}, K_{1}, \bf{P}_{1})$ with a singular bundle data $\bf{P}$.
For $i=0, 1$, let $\beta_{i}$ be a critical point of the Chern--Simons functional on $\mathcal{B}(Y_{i}, K_{i}, \bf{P}_{i})$ respectively. 
Firstly, we consider the case when $W$ , $Y_{0}$, and $Y_{1}$ are connected.
%Similarly, as before, a 
A family of Fredholm operators $\{D_{A}\}$ parametrized by points of $\mathcal{B}_{z}(W, S,\mathbf{P};\beta_{0},\beta_{1})$ forms the determinant line bundle 
\[\lambda_{z}(W, S,\mathbf{P};\beta_{0},\beta_{1}):=\mathrm{det\ ind}\{D_{A}\}\]
and this formally orients the moduli space $M_{z}(W,S, \mathbf{P};\beta_{0}, \beta_{1})$.

The two point set of orientations of $\lambda_{z}(W,S, \mathbf{P};\beta_{0}, \beta_{1})$ is denoted by
\[\Lambda_{z}(W, S, \bf{P}; \beta_{0}, \beta_{1}).\]
Since there is a canonical identification $\Lambda_{z}(W, S, \mathbf{P}; \beta_{0}, \beta_{1})\cong \Lambda_{z'}(W, S, \bf{P}; \beta_{0}, \beta_{1})$ for any choice of homotopy classes $z$ and $z'$, we may drop $z$ from the notation.

In particular, if both $\beta_{0}$ and $\beta_{1}$ coincide with the associated base points $\theta$ and $\theta'$, we simply write
\[\Lambda(W, S, \bf{P}).\]

    We call the choice of elements in $\Lambda(W, S, \mathbf{P}; \beta_{0}, \beta_{1})$  \textit{a formal orientation} of the moduli space $M(W, S, \mathbf{P}, \beta_{0}, \beta_{1})$.
    If the moduli space $M(W, S, \mathbf{P}, \beta_{0}, \beta_{1})$ is regular, a formal orientation gives an orientation as a smooth manifold.
   
    In particular, the restriction of the formal orientation $o(W, S, \mathbf{P}, \beta_{0}, \beta_{1})$ to $\Lambda_{z}(W, S, \mathbf{P}; \beta_{0}, \beta_{1})$
    defines an element $o_{z}(W, S, \mathbf{P}, \beta_{0}, \beta_{1})$.
    For a regular moduli space, the element $o_{z}(W, S, \mathbf{P}, \beta_{0}, \beta_{1})$ gives an orientation of the moduli space contained in the homotopy class $z$.

Let us recall the following notion:
\begin{defn}[$I$-orientation, \cite{KM11u}]

   Let $(Y_{0}, K_{0}, \bf{P}_{0}, \bf{a}_{0} )$ and $(Y_{1}, K_{1}, \bf{P}_{1}, \bf{a}_{1} )$ are three-manifolds with singular bundle data and auxiliary data $\bf{a}_{i}$, $(i=0, 1)$ respectively.
   Let $(W, S, \bf{P})$ be a cobordism of pairs from $(Y_{0}, K_{0}, \bf{P}_{0})$ to $(Y_{1}, K_{1}, \bf{P}_{1})$.
   Then a choice of an element in $\Lambda(W, S, \bf{P})$ is called an \textit{$I$-orientation} of $(W, S, \bf{P})$.
 \end{defn}
 The reader can refer detailed definition of singular bundle data and auxiliary data in \cite{KM11u}.
Consider the composition of two cobordisms of pairs. 
Then we have a canonical isomorphism
\[\Lambda_{z}(W, S,\mathbf{P};\alpha, \beta)\otimes_{\Z/2}\Lambda_{z'}(W', S',\mathbf{P}';\beta, \gamma)\xrightarrow{\cong}\Lambda_{z'\circ z}(W'\circ W, S'\circ S, \mathbf{P'\circ P}; \alpha, \gamma)\]
that essentially follows from the additive formula of Fredholm indices.
This identification enables us to write a formal orientation  $o_{z\circ z'}(W'\circ W, S'\circ S, \mathbf{P'\circ P}; \alpha, \gamma)$ as a product of two formal orientations:
\[\pm o_{z}(W, S,\mathbf{P};\alpha, \beta)\otimes_{\mathbb{Z}/2}o_{z'}(W', S',\mathbf{P}';\beta, \gamma)\]
up to sign.
This extends to the canonical identification:
\[\Lambda(W, S,\mathbf{P};\alpha, \beta)\otimes_{\Z/2}\Lambda(W', S',\mathbf{P}';\beta, \gamma)\xrightarrow{\cong}\Lambda(W'\circ W, S'\circ S, \mathbf{P'\circ P}; \alpha, \gamma).\]
For a cylindrical cobordism $(W, S)=[0, 1]\times(Y, K)$, we  simply write
\[\Lambda_{z}(\alpha, \beta):=\Lambda_{z}([0, 1]\times Y, [0, 1]\times K;\alpha, \beta),\]
and 
\[\Lambda_{z}(\beta):=\Lambda_{z}(\beta, \theta)\]
for a fixed base critical point $\theta$. 
In particular, we have the following canonical identification:
\begin{eqnarray}\label{ori-rel}
    \Lambda(\beta)\otimes_{\Z/2}\Lambda(W, S,\mathbf{P};\beta, \beta')\cong \Lambda(W, S, \mathbf{P})\otimes_{\Z/2}\Lambda(\beta')
\end{eqnarray}
The above identification gives us the following consequence:
For a given $I$-orientation for $(W, S, \mathbf{P})$, and fixed orientations for $\beta$ and $\beta'$, the moduli space $M(W, S, \mathbf{P}; \beta, \beta')$ is formally oriented in a canonical way.

Next, we consider a cobordism $(W,S):(Y, K)\rightarrow (Y', K')$ such that $Y$ and $Y'$ are possibly disconnected.
For such a cobordism, we assume that the connected components $Y:=\sqcup_{i}^{m}Y_{i}$ and $Y':=\sqcup_{i}^{n}Y'_{i}$ are ordered by the indices.
Let $\beta$ be a critical point on $Y$, and we write $\beta_{i}$ for its restriction on the component $Y_{i}$.
We write similarly $\beta'_{i}=\beta'|_{Y'_{i}}$ for a critical point $\beta'$ on $Y'$.
Then we define 
\begin{eqnarray*}
    \Lambda(\beta)&:=&\Lambda(\beta_{1})\otimes_{\Z/2}\cdots\otimes_{\Z/2} \Lambda(\beta_{m}),\\
    \Lambda(\beta')&:=&\Lambda(\beta'_{1})\otimes_{\Z/2} \cdots \otimes_{\Z/2} \Lambda(\beta'_{n}).
\end{eqnarray*}
We fix a formal orientation of the moduli space $M(W,S; \mathbf{P};\beta,\beta')$ by the relation (\ref{ori-rel}) replacing $\Lambda(\beta)$ and $\Lambda(\beta')$ as above.

An instanton moduli space parametrized by $G$ is defined as a zero-set of a Fredholm section $\phi$ of the Banach bundle \[\mathbf{P}_{G}\rightarrow \mathcal{B}(W,S,\mathbf{P};\alpha, \beta)\times G\]
which is locally defined by $\phi([A], g):=([A], g,  F^{+_{g}}_{A})$.
We write $\phi^{-1}(0)=:M_{G}(W,S,\mathbf{P};\alpha, \beta)$.
In particular, for a fixed homotopy class $z$ over the cobordism $(W,S,\mathbf{P};\alpha, \beta)$, we define
\[
M_{z,G}(W,S,\mathbf{P};\alpha, \beta):=\phi^{-1}(0)\cap \mathcal{B}_{z}(W,S,\mathbf{P};\alpha, \beta)\times G.
\]
Assume $G$ is a smooth connected orientable manifold. 
Then there is a usual orientation bundle for $G$, and whose set of orientations are denoted by $\Lambda(G)$.
In case of $G$ is a cornered smooth manifold, then we define $\Lambda(G)$ as the orientation of its interior $\mathrm{int}(G)$ instead.
Assume we fix an orientation 
$o(G)\in \Lambda(G)$.
We define formal orientation of $M_{z, G}(W,S,\mathbf{P};\alpha, \beta)$ by  
\[o_{z}(W,S,\mathbf{P};\alpha, \beta)\otimes_{\Z/2}o(G)\]
as in an element of $\Lambda_{z}(W,S,\mathbf{P};\alpha, \beta)\otimes_{\Z/2}\Lambda(G)$.

For a formal orientation $o_{z}(W, S, \mathbf{P};\alpha, \beta)\in \Lambda_{z}(W, S, \mathbf{P};\alpha, \beta)$, we define an integer 
\[
\mathrm{ind}(o_{z}(W, S, \mathbf{P};\alpha, \beta)):=\mathrm{ind}D_{A}
\]
where $[A]\in \mathcal{B}_{z}(W, S, \mathbf{P};\alpha, \beta)$. 
Note that the integer $\mathrm{ind}(o_{z}(W, S, \mathbf{P};\alpha, \beta))$ depends on the choice of the path $z$ up to homotopy.
For a finite-dimensional orientable manifold $M$, we define $\mathrm{ind}(o_{M}):=\mathrm{dim}M$ for $o_{M}\in \Lambda(M)$.
Let $o_{1}$ and $o_{2}$ be given orientations of finite-dimensional manifold or formal orientations of moduli spaces.
Then the similar consideration as (\ref{commori}) implies that
\[o_{1}\otimes_{\Z/2}o_{2}=(-1)^{\mathrm{ind}(o_{1})\mathrm{ind}(o_{2})}o_{2}\otimes_{\Z/2}o_{1}.\]

Finally, we discuss the orientation induced on boundary faces of compactified moduli spaces.
Assume that a family of metric $G$ has a boundary face of the form of product $G_1 \times G_2$, and the instanton moduli space $M_{G}$ parameterized by $G$ has a compactification $M^{+}_G$ whose boundary faces are diffeomorphic to the form $M_{G_{1}}\times M_{G_{2}}$.
Then, gluing theory \cite[Section 5]{Do02} gives a local diffeomorphism 
\[[T, \infty)\times M_{G_{1}}\times M_{G_{2}}\rightarrow M_{G}\]
under the assumption that the regularity of moduli spaces holds.
The half interval $[T, \infty)$ models an outward normal vector of the boundary of the compactified moduli space. 
This local diffeomorphism gives an identification between the set of orientations:
\[\Lambda_{[T, \infty)}\otimes_{\Z/2}\Lambda_{M_{G_{1}}}\otimes_{\Z/2}\Lambda_{M_{G_{2}}}\cong \Lambda_{M_{G}}.\]
In particular, we can identify the orientation of the boundary face $o(M_{\partial G})$ as an element in $\Lambda_{M_{G_{1}}}\otimes_{\Z/2}\Lambda_{M_{G_{2}}}$.
We orient the boundary face $M_{\partial G}$ by the convention:
\[o_{+}\otimes_{\Z/2}o(M_{\partial G})=o(M_{G}) \]
where $o_{+}$ is the orientation of $[T, \infty)$ of the positive direction.

In particular, the orientation of $M_{G_{1}}\times M_{G_{2}}$ as a boundary face is differ by the factor $(-1)^{\mathrm{dim}G_{1}\mathrm{dim}G_{2}}$ from its product orientation.

\subsection{Trace of diagrammatic deformations.}\label{ori_of_KMmap}

Let $T_{uv}: K_{u}\rightarrow K'_{v}$ be a link cobordism introduced in \Cref{isotopy_trace}.
We discuss the boundary orientation of the space of metric $G^{T}_{uv}$ and the instanton moduli space $M^{T}_{uv}(\alpha, \beta)$.
Firstly, note that there are orientation-preserving identifications:
\[G^{T}_{uv}\cong G^{T}_{uv'}\times G_{v'v},\ G^{T}_{uv}\cong G_{uu'}\times G^{T}_{u'v}.\]
This implies the following lemma.
\begin{lem}
    The boundary face $G^{T}_{uv'}\times \breve{G}_{v'v}$ has an orientation differ by the factor $(-1)^{\mathrm{dim}{G}^{T}_{uv'}}$ from the product orientation. On the other hand, the boundary face $\breve{G}_{uu'}\times G^{T}_{u'v}$ has an orientation differ by the factor $-1$ from the product orientation.
\end{lem}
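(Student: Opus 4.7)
The plan is to prove each claim by a direct sign computation using the boundary-orientation convention $o_+ \otimes_{\Z/2} o(\partial) = o(\text{ambient})$ stated earlier in the appendix, together with the anti-commutativity rule $o_1 \otimes_{\Z/2} o_2 = (-1)^{\mathrm{ind}(o_1)\mathrm{ind}(o_2)} o_2 \otimes_{\Z/2} o_1$. Both factors in the statement arise from combining two ingredients: (a) the orientation induced on each broken face $\breve{G}$ from the chosen orientation on $G = \R \times \breve{G}$, and (b) the standard product-boundary formula applied to the oriented identifications $G^T_{uv} \cong G^T_{uv'} \times G_{v'v}$ and $G^T_{uv} \cong G_{uu'} \times G^T_{u'v}$.

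For step (a), I will use $G = \R \times \breve{G}$ with the $\R$ factor the translation parameter and the chosen orientation $o(G) = o(\partial_t) \otimes o(\breve{G})$. The broken boundary of $G_{v'v}$ lies at the outgoing end of the associated cobordism, where the neck-stretching gluing parameter $T$ aligns with $+t$; hence the outward normal is $[\partial_t]$, and the convention $o_+ \otimes o(\partial G_{v'v}) = o(G_{v'v})$ gives $o(\partial G_{v'v}) = o(\breve{G}_{v'v})$ (no sign). For $G_{uu'}$ the break is at the incoming end, where the gluing parameter aligns with $-t$, so $o_+ = -[\partial_t]$, and the same computation yields $o(\partial G_{uu'}) = -o(\breve{G}_{uu'})$.

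For step (b), I will apply the product-boundary formula: if $A \times B$ is oriented by $o(A) \otimes o(B)$, then at a face of the form $A \times \partial B$ the induced orientation is
\[
o(\partial(A \times B)\big|_{A \times \partial B}) = (-1)^{\mathrm{ind}(o(A))}\, o(A) \otimes o(\partial B),
\]
where the sign comes from commuting the outward normal of $B$ past $o(A)$ to place it at the front; at a face of the form $\partial A \times B$, no commutation is needed and the induced orientation is simply $o(\partial A) \otimes o(B)$. Applying the first formula to $G^T_{uv} \cong G^T_{uv'} \times G_{v'v}$ with $\partial G_{v'v} = \breve{G}_{v'v}$ gives the boundary orientation $(-1)^{\dim G^T_{uv'}}\, o(G^T_{uv'}) \otimes o(\breve{G}_{v'v})$, proving the first claim. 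Applying the second formula to $G^T_{uv} \cong G_{uu'} \times G^T_{u'v}$ together with the sign from step (a) gives the boundary orientation $-\,o(\breve{G}_{uu'}) \otimes o(G^T_{u'v})$, proving the second claim.

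The main obstacle is justifying the choice of sign convention in step (a): namely, that at the broken face of $G_{v'v}$ the outward neck-stretching direction agrees with $+\partial_t$, whereas at the broken face of $G_{uu'}$ it agrees with $-\partial_t$. This asymmetry — breaking at the outgoing vs.\ incoming end of the underlying cobordism — is precisely the geometric input that distinguishes the two cases and produces the asymmetric sign factors $(-1)^{\dim G^T_{uv'}}$ and $-1$ in the statement; the rest of the computation is purely formal manipulation of the tensor product of orientation lines.
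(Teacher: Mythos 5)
Your proof is correct and follows essentially the same path as the paper's: the paper commutes the outward normal $o_\nu$ past $o_{G^T}(u,v')$ to pick up $(-1)^{\dim G^T_{uv'}}$ and then uses $o_\nu = o_+$ (resp.\ $o_\nu = -o_+$) together with $o_G(v',v) = o_+ \otimes o_{\breve G}(v',v)$, which is exactly your product-boundary computation packaged a little differently. The only substantive difference is that the paper simply asserts $o_\nu = o_+$ and $o_\nu = -o_+$ in the two cases, while you supply the geometric explanation (breaking at the outgoing vs.\ incoming end aligns the gluing parameter with $\pm\partial_t$) that the paper leaves implicit.
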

\begin{proof}
    The first half statement follows from the computation:
    \[o_{\nu}\otimes_{\Z/2}o_{G^{T}}(u, v')\otimes_{\Z/2}o_{\breve{G}}(v',v)=(-1)^{\mathrm{dim}G^{T}_{uv'}}o_{G^{T}}(u,v')\otimes_{\Z/2}o_{+}\otimes_{\Z/2}o_{\breve{G}}(v', v),\]
    and the identifications
    \[o_{G}(v', v)=o_{+}\otimes_{\Z/2}o_{\breve{G}}(v', v), \ o_{G^{T}}(u, v)=o_{G^{T}}(u, v')\otimes_{\Z/2}o_{G}(v', v).\]
    For the second half statement, note that $o_{\nu}=-o_{+}$ and the rest of the argument is similar.
\end{proof}
\begin{lem}
    There is a choice of the orientations $o_{z}(T_{uv};\alpha, \beta)$ such that
    \begin{itemize}
        \item $o_{z}(T_{uw};\alpha, \gamma)=o_{z'}(T_{uv};\alpha, \beta)\otimes_{\Z/2}o_{z''}(S_{vw};\beta,\gamma)$
        \item $o_{z}(T_{uw};\alpha, \gamma)=o_{z'}(S_{uv};\alpha, \beta)\otimes_{\Z/2}o_{z''}(T_{vw};\beta,\gamma) $
    \end{itemize}
    where the path $z$ is the concatenation of the paths $z'$ and $z''$.
\end{lem}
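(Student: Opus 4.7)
The plan is to reduce the consistency assertion to the associativity of the canonical determinant-line isomorphism under composition of cobordisms, and then to bootstrap a single $I$-orientation of $T$ into a coherent family of formal orientations on all the $T_{uv}$.

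First, I would fix once and for all an $I$-orientation $o(T) \in \Lambda(W, T, \mathbf{P})$ of the given cobordism $T : K \to K'$ together with its singular bundle data. Because $T$ is a product on neighborhoods of every crossing, this $I$-orientation $o(T)$ yields, for each resolution vector $u$, a canonical formal orientation $o(T_u) \in \Lambda(T_u)$ on the cobordism $T_u : K_u \to K'_u$: tensor $o(T)$ with the identity orientation on the product pieces in the crossing neighborhoods. I would then \emph{define}
\[
    o_z(T_{uv}; \alpha, \beta) \;:=\; o(T_u) \otimes_{\Z/2} o_{z''}(S_{uv}; \alpha', \beta),
\]
using Kronheimer--Mrowka's preferred $o_{z''}(S_{uv})$ on the $K'$-side decomposition $T_{uv} = T_u \circ S_{uv}$ supplied by the product identification $G^T_{uv} \cong G^T_{uu} \times G_{uv}$ from the preceding lemma. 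The path data factor through the usual concatenation rule for determinant lines.

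With this definition in place, the first identity is essentially immediate: writing $T_{uw} = T_u \circ S_{uv} \circ S_{vw}$ and invoking the Kronheimer--Mrowka compatibility $o(S_{uw}) = o(S_{uv}) \otimes_{\Z/2} o(S_{vw})$ (which is how their orientations are constructed), associativity of the composition isomorphism on determinant lines gives
\[
    o(T_u) \otimes_{\Z/2} o(S_{uw}) \;=\; \bigl(o(T_u) \otimes_{\Z/2} o(S_{uv})\bigr) \otimes_{\Z/2} o(S_{vw}),
\]
which is precisely the first bullet.

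The main obstacle, and the step I expect to require the most care, is the second bullet, which decomposes $T_{uw}$ with the resolution on the $K$-side: $T_{uw} = S_{uv} \circ T_{vw}$. The plan is to exploit once more that $T$ is a product near every crossing, so that the standard cobordism $S_{uv}$ on the $K$-side is \emph{isotopic rel boundary}, inside the composition with $T$, to the standard cobordism $S_{uv}$ on the $K'$-side; and to verify that Kronheimer--Mrowka's preferred orientations are transported to one another by this isotopy (this is essentially their compatibility of orientations with product structure on neighborhoods of crossings). Once that is established, the second identity is obtained from the first by the graded commutativity law $o_1 \otimes_{\Z/2} o_2 = (-1)^{\mathrm{ind}(o_1)\mathrm{ind}(o_2)} o_2 \otimes_{\Z/2} o_1$, matched against the boundary-orientation sign $(-1)^{\dim G^T_{uv'}}$ and the sign $-1$ appearing in the preceding lemma. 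The delicate step here is confirming that, with the orientation on $G^T_{uv}$ chosen as in that lemma, both signs cancel for all $u,v,w$ simultaneously; my plan is to check this first in the corner case $u=v=w$, where $T_{uu} = T_u$ is the pure product cobordism and both identities collapse to the definition, and then extend by associativity.
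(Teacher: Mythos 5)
Your proposal has a genuine gap, and it stems from conflating two distinct kinds of orientation data: the $I$-orientations of cobordisms of pairs (which are what this lemma is about), and the orientations of the families of metrics $G^T_{uv}$ (which enter only in the \emph{next} proposition, \Cref{KMT_orientation}). The signs $(-1)^{\dim G^T_{uv'}}$ and $-1$ from the lemma immediately preceding this one live entirely in $\Lambda(G^T_{uv})$; they cannot be ``matched against'' the graded commutativity of the determinant-line tensor product to produce the second bullet. The second bullet is not a rearrangement of the factors in the first bullet (which is all the rule $o_1 \otimes_{\Z/2} o_2 = (-1)^{\operatorname{ind}(o_1)\operatorname{ind}(o_2)} o_2 \otimes_{\Z/2} o_1$ could deliver); it is a \emph{different decomposition} of $T_{uw}$, with the saddle $S_{uv}$ performed on the incoming ($K$-)side rather than the outgoing ($K'$-)side. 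Graded commutativity is simply the wrong tool here.

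A second, smaller gap: your definition $o_z(T_{uv}; \alpha, \beta) := o(T_u) \otimes_{\Z/2} o_{z''}(S_{uv}; \alpha', \beta)$ leaves the intermediate critical point $\alpha'$ unspecified, and $o(T_u)$ by itself is an $I$-orientation of a cobordism, not a formal orientation with both ends pinned; these need to be reconciled before the tensor can be read as an element of $\Lambda_z(T_{uv};\alpha,\beta)$. The paper avoids this issue entirely by instead working from the canonical relation $o(\alpha)\otimes_{\Z/2}o_z(W,S;\alpha,\beta) = o_I(W,S)\otimes_{\Z/2}o(\beta)$ (equation (\ref{ori-rel})): one fixes orientations of all critical points and an $I$-orientation $o_I(T;u,w)$ of each $T_{uw}$, and both bullets reduce to a multiplicativity of the $I$-orientations, namely $o_I(T;u,w)=o_I(T;u,v)\otimes_{\Z/2}o_I(v,w)$ for the first and $o_I(T;u,w)=o_I(u,v)\otimes_{\Z/2}o_I(T;v,w)$ for the second. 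The real content you correctly sense — that $T$ being a product near every crossing lets one slide the saddle to either side without changing the $I$-orientation — is where the argument ultimately rests, but it sits at the level of $I$-orientations of cobordisms, and your proof does not establish it.
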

\begin{proof}

Choose paths 
   $z_0: \gamma \rightarrow \theta'$
over the cylinder and write the concatenations $z_1=z_0 \circ z$ and $z_2=z_0 \circ z''$.
Note that $z_1=z_2 \circ z'$ holds.
Since we have natural identifications of $I$-orientations:
\begin{eqnarray*}
    o_{z}(T_{uw};\alpha, \gamma)\otimes_{\Z/2}o_{z_{0}}(\gamma)&=&o_{z_{1}}(\alpha)\otimes_{\Z/2}o_{I}(T; u, w)\\
    &=&o_{z_{1}}(\alpha)\otimes_{\Z/2}o_{I}(T;u, v)\otimes_{\Z/2}o_{I}(v,w)\\
    &=&o_{z'}(T_{uv};\alpha,\beta)\otimes_{\Z/2}o_{z_{2}}(\beta)\otimes_{\Z/2}o_{I}(v,w)\\
    &=&o_{z'}(T_{uv};\alpha, \beta)\otimes_{\Z/2}o_{z''}(S_{vw};\beta,\gamma)\otimes_{\Z/2}o_{z_{0}}(\gamma).
\end{eqnarray*}
The first item follows from this computation, and the proof of the second item is similar.
\end{proof}
\begin{prop}\label{KMT_orientation}
    The oriented components of $M_{\partial G^{T}_{uv}}(\alpha,\beta)$ consists of 
    \begin{itemize}
        %\item[(i)]  %$(-1)^{*}M^{T}_{uv'}(\alpha, \beta)_{0}\times\breve{M}_{v'v}(\beta, \gamma)_{0}$ 
        \item[(i)] $(-1)^{1+\mathrm{dim}\breve{G}_{uu'}\mathrm{dim}G^{T}_{u'v}}\breve{M}_{uu'}(\alpha, \beta)_{0}\times M^{T}_{u'v}(\beta, \gamma)_{0}$
        \item[(ii)] $(-1)^{\mathrm{dim}G^{T}_{uv'}(\mathrm{dim}\breve{G}_{v'v}+1)}M^{T}_{uv'}(\alpha, \beta)_{0}\times\breve{M}_{v'v}(\beta, \gamma)_{0}$ %$(-1)^{*}\breve{M}_{uu'}(\alpha, \beta)_{0}\times M^{T}_{u'v}(\beta, \gamma)_{0}$
    \end{itemize}
\end{prop}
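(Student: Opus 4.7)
The plan is to compute the induced orientation on each boundary face of the parametrized moduli space directly from the gluing convention $o_+\otimes_{\Z/2} o(M_{\partial G})=o(M_G)$, which together with the formula $o(M_{G^T_{uv}}(\alpha,\gamma))=o(T_{uv};\alpha,\gamma)\otimes_{\Z/2} o(G^T_{uv})$ from the preliminaries gives
\[
o(M_{\partial G^T_{uv}}(\alpha,\gamma))\big|_{\text{face}}=o(T_{uv};\alpha,\gamma)\otimes_{\Z/2} o(\partial G^T_{uv})\big|_{\text{face}}.
\]
So the proof splits into three ingredients: (a) the orientation of the relevant boundary face of $\partial G^T_{uv}$, which has just been computed in the preceding lemma; (b) a decomposition of the $I$-orientation $o(T_{uv};\alpha,\gamma)$ according to whether the splitting is on the incoming or outgoing end, provided by the preceding lemma on composition of $I$-orientations; and (c) a bookkeeping step that converts the resulting product of four factors into the product orientation $o(\breve M_{uu'})\otimes o(M^T_{u'v})$ or $o(M^T_{uv'})\otimes o(\breve M_{v'v})$, using the graded commutation rule $o_1\otimes o_2=(-1)^{\mathrm{ind}(o_1)\mathrm{ind}(o_2)}o_2\otimes o_1$.

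For case (i), I will substitute $o(T_{uv};\alpha,\gamma)=o(S_{uu'};\alpha,\beta)\otimes o(T_{u'v};\beta,\gamma)$ and the boundary-face formula $o(\partial G^T_{uv})|_{\breve G_{uu'}\times G^T_{u'v}}=-\,o(\breve G_{uu'})\otimes o(G^T_{u'v})$, then swap $o(\breve G_{uu'})$ past $o(T_{u'v};\beta,\gamma)$ to group the $I$-orientations with their parameter spaces and form the product orientation. Since we are looking at the zero-dimensional stratum, the relation $\mathrm{ind}(T_{u'v};\beta,\gamma)+\dim G^T_{u'v}=0$ implies $\mathrm{ind}(T_{u'v};\beta,\gamma)\equiv \dim G^T_{u'v}\pmod 2$, and the commutation sign becomes $(-1)^{\dim\breve G_{uu'}\,\dim G^T_{u'v}}$. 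Combined with the leading $-1$ from the boundary face of $\partial G^T_{uv}$ this yields $(-1)^{1+\dim\breve G_{uu'}\,\dim G^T_{u'v}}$, as claimed. Case (ii) is strictly analogous, substituting $o(T_{uv};\alpha,\gamma)=o(T_{uv'};\alpha,\beta)\otimes o(S_{v'v};\beta,\gamma)$ and the factor $(-1)^{\dim G^T_{uv'}}$ from the preceding lemma, and commuting $o(S_{v'v};\beta,\gamma)$ past $o(G^T_{uv'})$ with the sign $(-1)^{\dim\breve G_{v'v}\,\dim G^T_{uv'}}$, which sums to $(-1)^{\dim G^T_{uv'}(\dim\breve G_{v'v}+1)}$.

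The main obstacle is not the computation itself but keeping the bookkeeping consistent with Kronheimer--Mrowka's sign conventions, in particular the agreement between (1) the orientation of $G^T_{uv}\cong \R\times G_{uv}$ induced from implanting a translated metric on $\R\times\mathring S$, (2) the ordering convention $o(M_G)=o(W,S,\mathbf P;\alpha,\beta)\otimes o(G)$, and (3) the outward-normal convention $o_+\otimes o(M_{\partial G})=o(M_G)$. A careful check that the identifications $G^T_{uv}\cong G^T_{uv'}\times G_{v'v}$ and $G^T_{uv}\cong G_{uu'}\times G^T_{u'v}$ from the preceding lemma indeed correspond to the geometric gluings realized by the neck-stretching limit, together with the parity identity $\mathrm{ind}(-)\equiv\dim(\text{parameter})\pmod 2$ on zero-dimensional strata, will then make every sign unambiguous and complete the proof.
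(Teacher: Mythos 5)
Your proof is correct and takes essentially the same approach as the paper: both use the boundary-face orientation of $\partial G^T_{uv}$ from the preceding lemma, the decomposition of the $I$-orientation of $T_{uv}$, the graded commutation rule, and the parity identity on the zero-dimensional stratum. The paper's presentation routes through the boundary orientation $\partial M_{G}$ and then cancels an extra factor $(-1)^{1-\dim G^T_{uv}}$ when passing to $M_{\partial G}$, whereas you compute $o(M_{\partial G}) = o(T_{uv};\alpha,\gamma)\otimes o(\partial G^T_{uv})$ directly; these are equivalent bookkeeping and yield the same signs.
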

\begin{proof}
    We prove the case (i). The proofs of other cases are similar.
    By the convention of the boundary orientations, the codimension-1 face $\breve{M}_{uu'}(\alpha, \alpha')_{0}\times M_{u'v;w}(\alpha', \beta;\gamma)_{0}$ is oriented by the local identification of ${M}_{uv; w}(\alpha, \beta;\gamma)_{1}$ and 
    \begin{eqnarray}\label{bdly1}
  \mathbb{R}_{\leq 0}\times \breve{M}_{uu'}(\alpha, \beta)_{0}\times {M}^{T}_{u'v}(\beta, \gamma)_{0}. 
    \end{eqnarray}
    Here $\mathbb{R}_{\leq 0}$ models outward normal vectors and is assumed to be oriented in the positive direction.
    To distinguish orientations, we write (\ref{bdly1}) as 
    \begin{eqnarray}\label{bdly2}
  \mathbb{R}_{\leq 0}\times [\breve{M}_{uu'}(\alpha, \alpha')_{0}\times {M}_{u'v;w}(\alpha', \beta;\gamma)_{0}]_{\partial}. 
    \end{eqnarray}
    instead.
    On the other hand, we consider the product space of the same underlying space as (\ref{bdly1}) but equipped with a product orientation on the factor $\breve{M}_{uu'}(\alpha, \alpha')_{0}\times {M}_{u'v;w}(\alpha', \beta;\gamma)_{0}$. 
    We write this space as
    \begin{eqnarray}\label{bdly3}
  \mathbb{R}_{\leq 0}\times [\breve{M}_{uu'}(\alpha, \alpha')_{0}\times {M}^{T}_{u'v}(\beta,\gamma)_{0}]_{\mathrm{prod}}. 
    \end{eqnarray}
    Since the gluing map : \[\mathrm{Gl}: (-\infty, -T)\times \breve{M}_{uu'}(\alpha, \beta)_{0}\times {M}^{T}_{u'v}( \beta,\gamma)_{0}\rightarrow {M}^{T}_{uv}(\alpha, \gamma)_{1}\]
     gives a local model of the ends of the moduli space ${M}^{T}_{uv}(\alpha, \gamma)_{1}$, we can compare the orientation of (\ref{bdly3}) and ${M}^{T}_{uv}(\alpha, \gamma)_{1}$.
     By our convention, the orientation of ${M}^{T}_{uv}(\alpha, \gamma)_{1}$ is identified with
     $o_{+}\otimes_{\mathbb{Z}/2}o_{\partial M_{G}}=(-1)^{1-\mathrm{dim}{G}^{T}_{uv}}o_{+}\otimes_{\mathbb{Z}/2}o_{ M_{\partial G}}$.
     Moreover, the second factor is identified with
     $o_{z}(S_{uv};\alpha, \gamma)\otimes_{\mathbb{Z}/2}o(\partial_{u'}{G}_{uv})$ where $\partial_{u'}G^{T}_{uv}=\breve{G}_{uu'}\times G^{T}_{u'v}$
     %$ o_{z'}(S_{uu'};\alpha, \alpha')\otimes _{\mathbb{Z}/{2}}o_{z''}(S_{u'v;w}; \alpha',\beta;\gamma)\otimes_{\mathbb{Z}/2} o(\partial_{u'}\breve{G}_{uv;w})$
      Since we only consider pseudo-diagrams, there is a natural identification between orientations
     $o_{z}(S_{uv};\alpha, \gamma)=o_{z'}(S_{uu'};\alpha, \beta)\otimes _{\mathbb{Z}/{2}}o_{z''}(S_{u'v}; \beta, \gamma)$.
     Moreover, the boundary orientation of the face $\breve{G}_{uu'}\times {G}^{T}_{u'v}$ in ${{G}^{T}}^{+}_{uv}$ is differ by $(-1)$.
     Hence, the orientation of (\ref{bdly3}) is differ by the factor $(-1)^{1+\mathrm{dim}\breve{G}_{uu'}\mathrm{dim}{G}^{T}_{u'v}+(1-\mathrm{dim}{G}^{T}_{uv})}$.
     Since the orientation of $\partial M_{G}$ and $M_{\partial G}$
     are differ by the factor $(-1)^{1-\mathrm{dim}{G}^{T}_{uv}}$,
     the orientation of a component $\breve{M}_{uu'}(\alpha,\beta)_{0}\times {M}^{T}_{u'v}(\beta, \gamma)_{0}$
     in $M_{\partial G}$ is differ by the factor $(-1)^{1+\mathrm{dim}\breve{G}_{uu'}\mathrm{dim}{G}^{T}_{u'v}}$
     from the boundary orientation. 
\end{proof}

\subsection{Orientations for excision cobordisms}
Next, we discuss the orientation of moduli spaces associated to excision cobordisms.
For given resolution $u$, $v$ and $w$, we write the excision cobordism as $S_{uv;w}$ for short.
Firstly, note that compatible relations exist in the choice of $I$-orientations as follows.

\begin{lem}\label{compatible i-ori}
There exists a choice of $I$-orientations such that
    \begin{itemize}
    \item $o_{I}(u, u')\otimes_{\Z/2} o_{I}(v, v')\otimes_{\Z/2} o_{I}(u', v'; w)=o_{I}(u, v; w) $,
    \item $o_{I}(u, v;w')\otimes_{\Z/2} o_{I} (w', w)=o_{I} (u, v;w)$.
\end{itemize}
\end{lem}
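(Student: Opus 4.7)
The plan is to exploit the canonical associativity of the composition pairing on I-orientations to set up a consistent choice of $o_I(u,v;w)$ by induction from a base case.

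First, I fix once and for all the cylindrical I-orientations $o_I(u,u')$ following \cite{KM11u}; these are compatible with composition, so that $o_I(u,u') \otimes_{\Z/2} o_I(u',u'') = o_I(u,u'')$ holds under the canonical isomorphism of determinant lines. Next, I choose an arbitrary I-orientation $o_I(u,v;uv)$ for the ``pure'' excision cobordism (with $|uv - w|_\infty = 0$) for each pair $(u,v)$. For a general triple $(u,v,w)$ with $uv \geq w$, the geometric factorization of $S_{uv;w}$ into the pure excision $S_{uv;uv}$ followed by a cylindrical cobordism from $uv$ to $w$ (in the output factor, supported in $D^3_-\times [0,1]$) lets me define
\[
o_I(u,v;w) \;:=\; o_I(u,v;uv) \otimes_{\Z/2} o_I(uv, w)
\]
via the canonical composition pairing.

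The second relation $o_I(u,v;w') \otimes_{\Z/2} o_I(w',w) = o_I(u,v;w)$ is then immediate from the cylindrical associativity $o_I(uv, w') \otimes_{\Z/2} o_I(w', w) = o_I(uv, w)$. The first relation, after substituting the definitions on both sides and cancelling the common factor $o_I(u'v', w)$ via $o_I(uv, w) = o_I(uv, u'v') \otimes_{\Z/2} o_I(u'v', w)$, reduces to the geometric identity
\[
o_I(u, u') \otimes_{\Z/2} o_I(v, v') \otimes_{\Z/2} o_I(u', v'; u'v') \;=\; o_I(u,v;uv) \otimes_{\Z/2} o_I(uv, u'v')
\]
in $\Lambda(W, S_{uv;u'v'})$.

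The main obstacle is verifying this last identity, which is a cocycle condition on the base choices $o_I(u,v;uv)$ as $(u,v)$ varies. My approach is to fix $o_I(\bfzero,\bfzero;\bfzero)$ arbitrarily, then determine $o_I(u,v;uv)$ for each $(u,v)$ by specializing the identity to $(u',v') = (\bfzero,\bfzero)$. The remaining task is to confirm that the resulting family satisfies the identity for general $(u',v')$, which follows from the associativity of cylindrical I-orientations applied separately in the $u$- and $v$-directions (i.e., $o_I(u, u') \otimes_{\Z/2} o_I(u', \bfzero) = o_I(u, \bfzero)$, and similarly for $v$), modulo Koszul signs (\ref{commori}) arising from reordering tensor factors on disjoint unions. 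Since the cylindrical pieces $S_{uu'}$ and $S_{vv'}$ occur in disjoint tubular neighborhoods inside $W$, the relevant Koszul signs on the two sides of the identity are governed by the same parities of Fredholm indices, and a direct parity computation using the index formula in the proof of \Cref{quantum ex} shows that they agree, completing the proof.
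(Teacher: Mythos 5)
Your construction is conceptually parallel to the paper's own convention~\eqref{iden_ex} for the metric orientations $o_G(u,v;w)$, namely: compose a fixed base excision orientation with a cylindrical one, anchored at a reference resolution. The second relation then does follow cleanly from cylindrical associativity, since no swap of tensor factors is involved. The paper itself gives essentially no proof (it defers to \cite{KM11u} via the identification $\Z^{N_1}\times\Z^{N_2}\cong\Z^{N_1+N_2}$), so your attempt is more explicit.

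However, there is a genuine gap at your last step. When you tensor the cocycle identity with $o_I(u'v',\mathbf{0})$ and substitute the defining relation at both $(u,v)$ and $(u',v')$, you are left with the requirement
\[
o_I(u,u')\otimes o_I(v,v')\otimes o_I(u',\mathbf{0})\otimes o_I(v',\mathbf{0})\otimes\Theta
= o_I(u,\mathbf{0})\otimes o_I(v,\mathbf{0})\otimes\Theta.
\]
To compose $o_I(u,u')$ with $o_I(u',\mathbf{0})$ you must move $o_I(u',\mathbf{0})$ past $o_I(v,v')$, and by~\eqref{commori} this incurs the Koszul sign
$(-1)^{\mathrm{ind}(o_I(v,v'))\cdot\mathrm{ind}(o_I(u',\mathbf{0}))}$.
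Your proposal asserts that a parity computation via the index formula of \Cref{quantum ex} shows this sign is always $+1$, but this is not carried out and does not appear to hold: for a cylinder $([0,1]\times S^3,\ S_{uu'}\sqcup[0,1]\times H)$ between pseudo-diagram resolutions, the index formula gives $\mathrm{ind}\equiv\chi(S_{uu'})\equiv|u-u'|_1\pmod 2$ (the $Q$-terms being even), so the sign is $(-1)^{|v-v'|_1\cdot|u'|_1}$, which is not generically $+1$. Indeed, the paper itself explicitly tracks the analogous Koszul factor $(-1)^{\dim G_{vv'}\cdot\dim G_{u,\mathbf{-1}}}$ in the signed boundary-face propositions following \Cref{compatible ori}, which shows these signs are real and nontrivial in the same setting. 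So the crux of your argument --- that the cocycle for the base excision orientations closes up --- is asserted rather than established, and as written the sign obstruction is not addressed. You would either need to build the compensating sign into the definition of $o_I(u,v;uv)$, show the $\Z/2$-valued 2-cocycle of Koszul signs is a coboundary, or identify an additional parity constraint (e.g.\ on the choice of base critical points) that forces the relevant indices to be even.
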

\begin{proof}
    Considering as a pair of resolutions $(u,v)$ is a vertex of $\Z^{N_1}\times \Z^{N_{2}}\cong \Z^{N_{1}+N_{2}}$, the statements essentially follows from the argument of the proof of \cite[]{KM11u}. Therefore we omit the detail. 
\end{proof}
The next lemma also follows from the original argument given in \cite{KM11u}: 
\begin{lem}\label{compatible ori}
    There exists a choice of orientations of the space of metrics such that
    \begin{itemize}
         \item $o_{G}(u, u')\otimes_{\Z/2} o_{G}(v, v')\otimes_{\Z/2} o_{G}(u', v'; w)=o_{G}(u, v; w) $,
    \item $o_{G}(u, v;w')\otimes_{\Z/2} o_{G} (w', w)=o_{G} (u, v;w)$.
    \end{itemize}
\end{lem}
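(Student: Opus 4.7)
The plan is to build the orientations $o_G(u,v;w)$ on $G_{uv;w}$ from the Kronheimer--Mrowka orientations on the standard spaces $G_{uw}$, taking advantage of the explicit construction recalled in the excision section: $G_{uv;w}$ is the image of $G_{uw_1}\times G_{vw_2}$ under the gluing map along the cylindrical region $\R\times \partial B_i$. Thus it carries a natural product orientation, and one may define
\[
o_G(u,v;w) := \varepsilon(u,v,w)\cdot o_G(u,w_1)\otimes_{\Z/2} o_G(v,w_2)
\]
for a sign $\varepsilon(u,v,w)\in\{\pm 1\}$ to be fixed. First I would take the $o_G(u,w)$ as chosen by Kronheimer--Mrowka, which by construction satisfy $o_G(u,u')\otimes_{\Z/2}o_G(u',w) = o_G(u,w)$ (the first bullet of \Cref{compatible i-ori}'s analogue for metrics) and pin down a preferred product orientation.

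For the first identity, I would substitute $o_G(u,w_1)=o_G(u,u')\otimes o_G(u',w_1)$ and $o_G(v,w_2)=o_G(v,v')\otimes o_G(v',w_2)$, then reorder the four factors so that $o_G(u,u')\otimes o_G(v,v')$ sits at the front and $o_G(u',w_1)\otimes o_G(v',w_2)$ at the back. The Koszul rule produces the sign $(-1)^{|u'-w_1|_1\cdot|v-v'|_1}$. Fixing $\varepsilon(u,v,w)$ to absorb this systematically (e.g. $\varepsilon(u,v,w)=(-1)^{|u-w_1|_1\cdot|v-w_2|_1/2}$ or a similar polynomial in the norms, chosen analogously to the sign $s(u,v;w)$ already used in the definition of $\Psi_{uv;w}$) makes both sides agree. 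The computation reduces to verifying that the exponent is additive along the splitting, which is a straightforward quadratic identity among $|u-u'|_1, |u'-w_1|_1, |v-v'|_1, |v'-w_2|_1$.

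For the second identity, the intermediate resolution $w'=(w'_1,w'_2)$ lies between $uv$ and $w$, so both factors split: $o_G(u,w_1) = o_G(u,w'_1)\otimes o_G(w'_1,w_1)$ and $o_G(v,w_2) = o_G(v,w'_2)\otimes o_G(w'_2,w_2)$. Reordering to separate the ``$G_{uv;w'}$-part'' from the ``$G_{w'w}$-part'' again incurs a Koszul sign, while on the other hand $o_G(w',w)$ itself is by the same definition $\varepsilon(w'_1,w'_2,w)\cdot o_G(w'_1,w_1)\otimes o_G(w'_2,w_2)$. The identity then becomes a bookkeeping check that the normalization $\varepsilon$ transforms consistently under this decomposition.

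The main obstacle is therefore the compatibility of the two normalizations: the sign $\varepsilon(u,v,w)$ forced by the first relation must coincide (modulo a boundary term absorbable into a choice of $0$-cochain on the cube $\{0,1\}^{N_1+N_2}$) with the one required by the second. I expect this to reduce, as in the standard existence proof for sign assignments on $\CKh$, to an acyclicity statement on the cube: the cochain measuring the discrepancy is a cocycle (one checks this on $3$-dimensional subfaces of the cube $\Z^{N_1}\times \Z^{N_2}\times \Z^{N_1+N_2}$ using only associativity of $\otimes_{\Z/2}$ and the Koszul rule), hence a coboundary, and can be killed by a uniform vertex-wise adjustment of $\varepsilon$. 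The remaining verifications are the routine quadratic identities among $|\cdot|_1$-norms already appearing in the proof of \Cref{chain map eqn}, and I would carry them out in the same style.
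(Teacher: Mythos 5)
Your approach departs from the paper's: the paper never actually proves this lemma (it merely cites \cite{KM11u}) and then, in the text immediately following, \emph{defines} $o_G(u,v;w)$ by the implicit normalization \eqref{iden_ex}, i.e.\ by prescribing $o_G(u,v;w)\otimes o_G(w,\mathbf{-1})$ in terms of fixed reference orientations $o_G(u,\mathbf{-1})$, $o_G(v,\mathbf{-1})$, $o_G(\mathbf{-1}\mathbf{-1};\mathbf{-1})$. You instead take the explicit product ansatz $o_G(u,v;w)=\varepsilon(u,v,w)\,o_G(u,w_1)\otimes o_G(v,w_2)$ and hope to pin down $\varepsilon$ by an acyclicity-of-the-cube argument. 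That plan has a genuine gap: if one reads the first bullet as a literal equality, the constraints on $\varepsilon$ are \emph{inconsistent}, not merely a coboundary problem. Specializing to $v'=v$ forces $\varepsilon(u,v,w)=\varepsilon(u',v,w)$, so $\varepsilon$ is independent of $u$; specializing to $u'=u$ forces $\varepsilon(u,v,w)=(-1)^{|u-w_1|_1|v-v'|_1}\varepsilon(u,v',w)$, whose right-hand side genuinely depends on $u$ through $|u-w_1|_1$. Since $u$ can range over $\{0,1\}^{N_1}$ with $|u-w_1|_1$ of either parity, no choice of $\varepsilon$ satisfies both, and in particular the discrepancy cochain you invoke is \emph{not} a cocycle, so the acyclicity argument cannot close this.

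You should, however, take this as evidence that the lemma cannot be taken at face value rather than as a pure failure of your method: the paper's own Proposition just after \eqref{iden_ex} records the relation for the $v$-factor with the extra sign $(-1)^{\dim G_{vv'}\cdot\dim G_{u,\mathbf{-1}}}$, which is exactly the Koszul sign your computation produces. The two bullets of the lemma only hold \emph{up to} such commutation signs, and the paper tracks those signs explicitly wherever the relations are used (see \Cref{excision_orientation} and the boundary-orientation proposition). So the honest fix is either to work from \eqref{iden_ex} directly (the paper's route) and carry the signs, or to restate your $\varepsilon$-constraints as the sign-corrected identities; as written, insisting on on-the-nose equality makes the system unsolvable and the argument you outline would not terminate.
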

Assume that the resolutions $u,v$, and $w$ satisfy $uv\geq w\geq -1$.
We define an orientation $o_{G}(u, v; w)$ on $G_{uv;w}$ by the following identification:
\begin{align}\label{iden_ex}
o_{G}(u, \mathbf{-1})\otimes_{\Z/2}o_{G}(v, \mathbf{-1})\otimes_{\Z/2}o_{G}(\mathbf{-1}\mathbf{-1};\mathbf{-1})=o_{G}(u, v; w)\otimes_{\Z/2}o_{G}(w, \mathbf{-1})
\end{align}
where $\mathbf{-1}$ denotes the resolution $(-1, \cdots ,-1)$.

\begin{prop}
    Let $n$ be the number of crossings for a pseudo-diagram $D$.
    The oriented boundary faces in $\partial G^{+}_{uv;w}$ consists of the following types:
    \begin{itemize}
        \item $-\breve{G}_{uu'}\times G_{u'v;w}$
        \item $(-1)^{1+|v-v'|_{1}(n-|u|_{1})}\breve{G}_{vv'}\times G_{uv';w}$
        \item $(-1)^{|uv-w'|_{1}}G_{uv;w'}\times \breve{G}_{w'w}$
    \end{itemize}  
    
\end{prop}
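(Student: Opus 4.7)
The proof proceeds by applying the boundary convention $o_\nu \otimes o(\partial G^+_{uv;w}) = o_G(u,v;w)$ face-by-face, in direct analogy with the proof of \Cref{KMT_orientation}. The outward normal is $o_\nu = -o_+$ at the incoming $u$- and $v$-ends (where $o_+$ denotes the positive $\R$-direction in the decomposition $G_{*,*} = \R \times \breve{G}_{*,*}$, with $o_G(*,*) = o_+ \otimes o_{\breve G}(*,*)$) and $o_\nu = +o_+$ at the outgoing $w$-end. The key inputs for decomposing $o_G(u,v;w)$ at each face are \Cref{compatible ori}, the normalization (\ref{iden_ex}), the anti-commutation formula (\ref{commori}), and the standard single-cube chain compatibility $o_G(v,\mathbf{-1}) = o_G(v,v') \otimes o_G(v',\mathbf{-1})$ from \cite{KM11u}.

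For the face $\breve G_{uu'} \times G_{u'v;w}$, I would compare the instance of (\ref{iden_ex}) at $(u,v;w)$ with the one at $(u',v;w)$, substitute $o_G(u,\mathbf{-1}) = o_G(u,u') \otimes o_G(u',\mathbf{-1})$, and cancel the common factor $o_G(w,\mathbf{-1})$; this yields $o_G(u,v;w) = o_G(u,u') \otimes o_G(u',v;w)$ with no extra commutation. Substituting $o_G(u,u') = o_+ \otimes o_{\breve G}(u,u')$ and applying the boundary convention with $o_\nu = -o_+$ gives the sign $-1$. Symmetrically, for $G_{uv;w'} \times \breve G_{w'w}$, comparing (\ref{iden_ex}) at $(u,v;w)$ and $(u,v;w')$ yields $o_G(u,v;w) = o_G(u,v;w') \otimes o_G(w',w)$; moving the $o_+$ extracted from $o_G(w',w)$ past $o_G(u,v;w')$ of dimension $|uv-w'|_1$ produces the factor $(-1)^{|uv-w'|_1}$ via (\ref{commori}), and since $o_\nu = +o_+$ at this outgoing face no further sign arises.

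The face $\breve G_{vv'} \times G_{uv';w}$ is the main obstacle: a naive application of \Cref{compatible ori} specialized to $u' = u$ would yield only $-1$ and miss the Koszul-type factor. The correct bookkeeping proceeds entirely through (\ref{iden_ex}): starting from the instance at $(u,v;w)$ and substituting $o_G(v,\mathbf{-1}) = o_G(v,v') \otimes o_G(v',\mathbf{-1})$, one commutes the factor $o_G(v,v')$ (of dimension $|v-v'|_1$) past $o_G(u,\mathbf{-1})$ (of dimension $|u|_1 + n$) to restore the canonical ordering, introducing the sign $(-1)^{(|u|_1+n)|v-v'|_1}$ by (\ref{commori}). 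Cancelling the common factors using the instance of (\ref{iden_ex}) at $(u,v';w)$ then yields
\[ o_G(u,v;w) = (-1)^{(|u|_1+n)|v-v'|_1}\, o_G(v,v') \otimes o_G(u,v';w). \]
Since $(|u|_1+n) \equiv (n - |u|_1) \pmod 2$, expanding $o_G(v,v') = o_+ \otimes o_{\breve G}(v,v')$ and applying the boundary convention with $o_\nu = -o_+$ produces the claimed sign $(-1)^{1+|v-v'|_1(n-|u|_1)}$.

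The principal difficulty is recognizing in this third case that \Cref{compatible ori} should not be applied directly with $u' = u$, since the three-factor ordering of that lemma (the $D_1$-stretching factor first, the $D_2$-stretching factor second, the remaining excision factor third) implicitly assumes strict $u > u'$ and $v > v'$; the only clean way to extract the boundary orientation is through (\ref{iden_ex}), which forces the commutation of $o_G(v,v')$ past the fixed $D_1$-factor $o_G(u,\mathbf{-1})$. Once this commutation is carried out, all three signs follow uniformly from the same template.
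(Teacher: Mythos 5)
Your proof is correct and follows essentially the same route as the paper's: in all three cases the boundary sign is read off by decomposing $o_G(u,v;w)$ via (\ref{iden_ex}) and \Cref{compatible ori}, tracking the Koszul sign from (\ref{commori}) when $o_G(v,v')$ is moved past $o_G(u,\mathbf{-1})$, and then converting $o_+$ to $\pm o_\nu$ depending on whether the face is incoming or outgoing. The only difference is presentational: you derive the intermediate identities such as $o_G(u,v;w)=o_G(u,u')\otimes o_G(u',v;w)$ explicitly from (\ref{iden_ex}), where the paper cites \Cref{compatible ori} and states the extra commutation factor $(-1)^{\dim G_{vv'}\cdot\dim G_{u,\mathbf{-1}}}$ directly, and you reorder the faces (treating the $w$-face before the $v$-face), which does not affect the content.
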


\begin{proof}
The identification \eqref{iden_ex} 
implies %the identification
    \[o_{G}(u,u')\otimes_{\Z/2}o_{G}(u', v;w)=o_{G}(u,v;w).\]
    Since $o_{G}(u,u')=o_{+}\otimes_{\Z/2}\breve{o}_{G}(u,u')=-o_{\nu}\otimes_{\Z/2}\breve{o}_{G}(u,u')$, the boundary orientation of the face $\breve{G}_{uu'}\times G_{u'v;w}$ is differ by the factor $(-1)$ from the product orientation.
    
    For the second item, note that we have a natural identification:
    \[(-1)^{\mathrm{dim}G_{vv'}\cdot \mathrm{dim}{G}_{u-1}}o_{G}(v,v')\otimes_{\Z/2}o_{G}(u, v';w)=o_{G}(u,v;w).\]
    Here, the factor $(-1)^{\mathrm{dim}G_{vv'}\cdot \mathrm{dim}{G}_{u-1}}=(-1)^{|v-v'|_{1}(n-|u|_{1})}$ arises from the operation switching $o_{G}(v,v')$ and $o_{G}(u, \mathbf{-1})$, and the rest of the argument is similar.

    For the third item, note that we have a natural identification:
    \[o_{G}(w', w)=o_{+}\otimes_{\Z/2}\breve{o}_{G}(w',w)=o_{\nu}\otimes_{\Z/2}\breve{o}_{G}(w',w).\]
    Applying the above identification to \ref{compatible ori} and switching $o_{G}(u, v;w')$ and $o_{\nu}$ give rise the factor $(-1)^{\mathrm{dim}G_{uv;w'}}=(-1)^{|uv-w'|_{1}}$.
\end{proof}
Now, we can compare the original orientation of moduli space and the composed orientation.
\begin{lem} \label{excision_orientation}
For cobordisms between pseudo-diagrams, the following holds: 
    \begin{itemize}
        \item $o_{z_{13}}(S_{uv;w};\alpha, \beta;\gamma)=%(-1)^{\mathrm{gr}(\beta)\cdot \mathrm{ind}(S_{uu'})}
        o_{z_{12}}(S_{uu'};\alpha, \alpha')\otimes_{\Z/2}o_{z_{23}}(S_{u'v;w};\alpha', \beta;\gamma)$
        \item $o_{z_{13}}(S_{uv;w};\alpha, \beta;\gamma)=%(-1)^{\mathrm{gr}(\alpha)\cdot \mathrm{ind}(z_{12})}
        o_{z_{12}}(S_{vv'};\beta, \beta')\otimes_{\Z/2}o_{z_{23}}(S_{uv';w};\alpha, \beta';\gamma)$
        \item $o_{z_{13}}(S_{uv;w};\alpha, \beta;\gamma)=o_{z_{12}}(S_{uv;w'};\alpha, \beta;\gamma')\otimes_{\Z/2}o_{z_{23}}(S_{w'w}; \gamma', \gamma)$
    \end{itemize}
\end{lem}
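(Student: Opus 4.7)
The plan is to prove each of the three identities by expanding both sides via the defining relation~\eqref{ori-rel} for formal orientations and then applying the $I$-orientation compatibility of \Cref{compatible i-ori}. I focus on the first identity; the second is handled by an almost verbatim argument with the roles of $u$ and $v$ exchanged, and the third is a single-ended composition along $\gamma'$.

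First I would apply~\eqref{ori-rel}, adapted to the two-incoming-end setting of the excision cobordism, to obtain
\[
o(\alpha) \otimes_{\Z/2} o(\beta) \otimes_{\Z/2} o_{z_{13}}(S_{uv;w};\alpha,\beta;\gamma) = o_I(S_{uv;w}) \otimes_{\Z/2} o(\gamma).
\]
Setting $v'=v$ in \Cref{compatible i-ori} (so the cylindrical contribution $o_I(v,v)$ is trivial), I would expand $o_I(S_{uv;w}) = o_I(S_{uu'}) \otimes_{\Z/2} o_I(S_{u'v;w})$ on the right-hand side. Applying~\eqref{ori-rel} in reverse — first to $S_{u'v;w}$ to rewrite $o_I(S_{u'v;w}) \otimes o(\gamma)$ as $o(\alpha') \otimes o(\beta) \otimes o_{z_{23}}(S_{u'v;w};\alpha',\beta;\gamma)$, then to $S_{uu'}$ to rewrite $o_I(S_{uu'}) \otimes o(\alpha')$ as $o(\alpha) \otimes o_{z_{12}}(S_{uu'};\alpha,\alpha')$ — converts this into
\[
o(\alpha) \otimes_{\Z/2} o_{z_{12}}(S_{uu'};\alpha,\alpha') \otimes_{\Z/2} o(\beta) \otimes_{\Z/2} o_{z_{23}}(S_{u'v;w};\alpha',\beta;\gamma).
\]
Comparing with the left-hand side and cancelling the prefactor $o(\alpha)$ gives the first identity, up to the commutation of $o(\beta)$ past $o_{z_{12}}(S_{uu'};\alpha,\alpha')$.

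The main obstacle will be the sign bookkeeping from~\eqref{commori} when commuting orientations past one another. Swapping $o(\beta)$ past $o_{z_{12}}(S_{uu'};\alpha,\alpha')$ introduces $(-1)^{\operatorname{ind}(o_{z_{12}}) \cdot \operatorname{ind}(o(\beta))}$, and one must verify that this (together with analogous signs for the other identities) vanishes modulo $2$ under the ordering convention for incoming and outgoing ends of the excision cobordism adopted when defining $o_I$. I expect the pseudo-diagram hypothesis to play a decisive role here because each intermediate resolution is an unlink, whose determinant-line parities are controlled by the number of components, making all of the relevant index parities predictable. For the third identity, the same scheme applies but the relevant commutation involves the intermediate orientation $o(\gamma')$ at the single composition end rather than $o(\alpha')$, so the sign analysis is strictly simpler.
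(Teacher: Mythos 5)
Your proposal follows essentially the same route as the paper: expand via \eqref{ori-rel}, decompose the $I$-orientation using \Cref{compatible i-ori}, reassemble factors with further applications of \eqref{ori-rel}, and handle the resulting commutation sign via \eqref{commori}. You correctly set up the starting identity
\[
o(\alpha) \otimes_{\Z/2} o(\beta) \otimes_{\Z/2} o_{z_{13}}(S_{uv;w};\alpha,\beta;\gamma) = o_I(S_{uv;w}) \otimes_{\Z/2} o(\gamma),
\]
correctly locate the sign obstruction as $(-1)^{\mathrm{ind}(S_{uu'}) \cdot \mathrm{gr}(\beta)}$ arising when $o(\beta)$ is commuted past the factor associated to $S_{uu'}$, and correctly observe that the third identity involves no such commutation at all.

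The one place you are imprecise — and where the paper's one-line argument is sharper — is the justification for why this sign vanishes. You write that the determinant-line parities are "controlled by the number of components," which leaves open the reading that $\mathrm{gr}(\beta)$ has the same parity as the number of components of the intermediate unlink; if that were the case, the sign would survive for odd component counts and the identity would fail. The fact the paper actually uses is stronger and cleaner: $\mathrm{gr}(\beta)$ is always \emph{even} for any critical point over a pseudo-diagram, independent of the number of components. Under the identification $C^\sharp(D_v) \cong H_*(S^2;\Z)^{\otimes r(D_v)}$, every generator is a critical point of a product of standard Morse functions on $S^2$, each factor contributing Morse index $0$ or $2$, so the total grading is a sum of even numbers. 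You should make this explicit to actually close the sign bookkeeping; as written, your mechanism does not pin down the parity.
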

\begin{proof}
We give the proof for the first item.
    Our convention and Lemma \ref{excision_orientation} implies
    \[o(\alpha)\otimes_{\Z/2}o(\beta)\otimes_{\Z/2}o_{I}(u, u')\otimes_{\Z/2} o_{I}(u', v; w)=o_{z_{13}}(S_{uv;w};\alpha, \beta;\gamma)\otimes_{\Z/2}o(\gamma).\]
    Note that we have the relation
    \[o(\beta)\otimes_{\Z/2}o_{I}(u, u')=(-1)^{\mathrm{gr}(\beta)\cdot \mathrm{ind}(S_{uu'})}o_{I}(u, u')\otimes_{\Z/2}o(\beta)\]
    where $\mathrm{ind}(S_{uu'})$ denotes the index of the deformation complex associated with an instanton in the moduli space $M_{z}(S_{uu'};\theta, \theta')$ for some homotopy class $z$. 
    Since we assume that links appear in the boundaries of the cobordisms are pseudo-diagrams, $\mathrm{gr}(\beta)$ is even. Finally, the orientation convention for $o_{z_{12}}(S_{uu'};\alpha, \alpha')$ and $o_{z_{23}}(S_{u'v;w};\alpha', \beta;\gamma)$ implies the desired relation.
   
\end{proof}

\begin{prop}\label{face of cpt moduli-1}
    
Assume that all resolutions are pseudo-diagrams.
    Then the associated boundary face of $\breve{M}_{uv;w}(\alpha, \beta;\gamma)_{1}$
    over (i) $\breve{G}_{uu'}\times G_{uu';w}$, (ii) $\breve{G}_{vv'}\times G_{uv':w}$, and (iii) $G_{uv;w'}\times \breve{G}_{w'w}$
    are diffeomorphic to the following oriented component in the family of moduli space $M_{\partial {G}_{uv;w}}(\alpha, \beta;\gamma)_{0}$.

\begin{itemize}
    \item[(i)]  $(-1)^{1+\mathrm{dim}\breve{G}_{uu'}\mathrm{dim}{G}_{u'v;w}}  \breve{M}_{uu'}(\alpha, \alpha')_{0} \times {M}_{u'v;w}(\alpha', \beta; \gamma)_{0}$,
    \item[(ii)] $(-1)^{1+\mathrm{dim}G_{vv'}\mathrm{dim}G_{u-1}+\mathrm{dim}\breve{G}_{vv'}\mathrm{dim}{G}_{uv';w}}  \breve{M}_{vv'}(\beta, \beta')_{0} \times {M}_{uv'; w}(\alpha, \beta'; \gamma)_{0}$,
    \item[(iii)] $(-1)^{\mathrm{dim}{G}_{uv;w'}(\mathrm{dim}\breve{G}_{w'w}+1)} {M}_{uv;w'}(\alpha, \beta; \gamma')_{0} \times \breve{M}_{w'w}(\gamma', \gamma)_{0}$.
\end{itemize}

\end{prop}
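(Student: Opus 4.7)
The plan is to mimic the proof of \Cref{KMT_orientation} given earlier, since the setup is essentially the same but with the excision cobordism in place of $T_{uv}$. For each of the three types of faces, I would use the gluing theorem to obtain a local diffeomorphism of the form $[T,\infty) \times M_{G_1} \times M_{G_2} \to M_{G_{uv;w}}$, and then track the sign differences coming from four independent sources: (a) outward versus inward normal, i.e.\ $o_\nu = -o_+$; (b) the computed orientation of the relevant face of $\partial G_{uv;w}$ compared to the product orientation (given already in the proposition just before, with factors $-1$, $(-1)^{1+|v-v'|_1(n-|u|_1)}$, $(-1)^{|uv-w'|_1}$); (c) the factorization of the $I$-orientation provided by \Cref{excision_orientation}; and (d) the convention-switch factor $(-1)^{(1-\dim G_{uv;w})}$ between the orientations of $\partial M_G$ and $M_{\partial G}$.

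Concretely, I would treat case (i) first in full detail. Near the face $\breve{G}_{uu'}\times G_{u'v;w}$, the gluing map identifies a neighborhood with $\R_{\leq 0} \times \breve{M}_{uu'}(\alpha,\alpha')_0 \times M_{u'v;w}(\alpha',\beta;\gamma)_0$. Writing the orientation of $M_{G_{uv;w}}$ as $o_z(S_{uv;w};\alpha,\beta;\gamma) \otimes_{\Z/2} o(G_{uv;w})$ and factoring both tensor factors by the lemmas cited above gives, after commuting the $\R_{\leq 0}$-factor to the front, an orientation that differs from $o_+ \otimes \breve{o}(\breve{M}_{uu'}) \otimes o(M_{u'v;w})$ by $(-1)^{1+\dim\breve{G}_{uu'}\cdot \dim G_{u'v;w}}$; combined with the convention $o_+\otimes o(M_{\partial G})=o(M_G)$ and the $(1-\dim G)$ factor between $\partial M_G$ and $M_{\partial G}$, the advertised sign emerges. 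Cases (ii) and (iii) follow the same pattern: in (ii) one additionally picks up $(-1)^{\dim G_{vv'}\cdot\dim G_{u,-1}}$ from permuting $o_G(v,v')$ past $o_G(u,-1)$ in the compatibility identity, and in (iii) one uses $o_G(w',w) = o_\nu \otimes \breve{o}_G(w',w)$ and the permutation of $o_\nu$ past $o(G_{uv;w'})$, contributing $(-1)^{\dim G_{uv;w'}}$.

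The main obstacle will be purely bookkeeping: keeping the three formal orientations (on the moduli spaces at the three boundary three-manifolds, and on the $I$-orientation of the cobordism $S_{uv;w}$) in a consistent order throughout, and verifying that the parity of $\gr(\alpha)$, $\gr(\beta)$, $\gr(\gamma)$ does not produce spurious signs. The hypothesis that all resolutions are pseudo-diagrams is crucial here: as used in the last line of the proof of \Cref{excision_orientation}, the critical points on the ends all have even Morse grading, so the commutations of $o(\alpha)$, $o(\beta)$, $o(\gamma)$ with the various $I$-orientation factors produce no sign. Once this simplification is in hand, the sign computation reduces to a purely combinatorial exercise with $\dim\breve{G}_{uu'}$, $\dim G_{u'v;w}$, $\dim G_{vv'}$, $\dim G_{u,-1}$, $\dim G_{uv;w'}$ and $\dim \breve{G}_{w'w}$, and the three claimed formulas fall out directly.
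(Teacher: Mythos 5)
Your proposal is correct and takes essentially the same approach as the paper: the paper's proof of this proposition is literally the one-liner ``The proof is parallel to that of Proposition \ref{KMT_orientation},'' and what you have written out (gluing local diffeomorphism, the sign $o_\nu = -o_+$, the boundary-face orientation of $\partial G^+_{uv;w}$ from the preceding proposition, the factorization of the formal $I$-orientation via \Cref{excision_orientation}, the $(-1)^{1-\dim G_{uv;w}}$ discrepancy between $\partial M_G$ and $M_{\partial G}$, the extra permutation factor $(-1)^{\dim G_{vv'}\dim G_{u,-1}}$ for case (ii), and the crucial use of even Morse gradings on pseudo-diagram resolutions) is exactly the content that the phrase ``parallel'' compresses. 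Your bookkeeping of the four sign sources and the reduction to a combinatorial exercise in the dimensions of the metric families matches the structure of the model proof.
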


\begin{proof}
The proof is parallel to that of Proposition \ref{KMT_orientation}.
\end{proof}

A similar argument implies the following proposition.

\begin{prop}
    Assume that all resolutions are pseudo-diagrams.
    Then the associated boundary face of ${M}_{uv;w}(T_{uv;w}; \alpha, \beta;\gamma)_{1}$
    over (i) $\breve{G}_{uu'}\times G^{T}_{uu';w}$, (ii) ${G}^{T}_{uu'}\times G_{uu';w}$
    (iii) $\breve{G}_{vv'}\times G^{T}_{uv':w}$, (iv) $G^{T}_{uv;w'}\times \breve{G}_{w'w}$, and
    (v) $G_{uv;w'}\times {G}^{T}_{w'w}$
    are diffeomorphic to the following oriented component in the family of moduli space $M_{\partial {G}_{uv;w}}(\alpha, \beta;\gamma)_{0}$.

\begin{itemize}
    \item[(i)]  $(-1)^{1+\mathrm{dim}\breve{G}_{uu'}\mathrm{dim}{G}^{T}_{u'v;w}}  \breve{M}_{uu'}(\alpha, \alpha')_{0} \times {M}^{T}_{u'v;w}(\alpha', \beta; \gamma)_{0}$,
    \item[(ii)] $(-1)^{1+\mathrm{dim}{G}^{T}_{uu'}\mathrm{dim}{G}_{u'v;w}}  {M}^{T}_{uu'}(\alpha, \alpha')_{0} \times {M}_{u'v;w}(\alpha', \beta; \gamma)_{0}$,
    \item[(iii)] $(-1)^{1+\mathrm{dim}G_{vv'}\mathrm{dim}G_{u-1}+\mathrm{dim}\breve{G}_{vv'}\mathrm{dim}{G}^{T}_{uv';w}}  \breve{M}_{vv'}(\beta, \beta')_{0} \times {M}^{T}_{uv'; w}(\alpha, \beta'; \gamma)_{0}$,
    \item[(iv)] $(-1)^{\mathrm{dim}{G}^{T}_{uv;w'}(\mathrm{dim}\breve{G}_{w'w}+1)} {M}^{T}_{uv;w'}(\alpha, \beta; \gamma')_{0} \times \breve{M}_{w'w}(\gamma', \gamma)_{0}$,
    \item[(v)] $(-1)^{\mathrm{dim}{G}_{uv;w'}(\mathrm{dim}{G}^{T}_{w'w}+1)} {M}_{uv;w'}(\alpha, \beta; \gamma')_{0} \times {M}^{T}_{w'w}(\gamma', \gamma)_{0}$. \qed
\end{itemize}
\end{prop}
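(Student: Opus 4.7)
The plan is to mimic the strategy used for \Cref{KMT_orientation} and the preceding analogous proposition for the excision cobordism without $T$, adapting it to the setting where the family of metrics $G^T_{uv;w} = \mathbb{R} \times G_{uv;w}$ carries the extra $\mathbb{R}$-parameter coming from translating the implanted metric on $\mathbb{R}\times \mathring{S}$. The three ingredients are: (a) the orientation convention for codimension-one faces, $o_+ \otimes_{\Z/2} o(M_{\partial G}) = o(M_G)$, where $o_+$ is the outward normal; (b) compatibility of $I$-orientations under composition (the analogue of \Cref{compatible i-ori} and \Cref{excision_orientation}), extended to allow a $T$-factor on any of the three slots $(u,u')$, $(u'v;w)$, or $(w',w)$; and (c) the sign rule $o_1 \otimes_{\Z/2} o_2 = (-1)^{\mathrm{ind}(o_1)\mathrm{ind}(o_2)} o_2 \otimes_{\Z/2} o_1$ for swapping formal orientations.

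First, I would establish the orientations of the five codimension-one faces of $G^{T,+}_{uv;w}$, following verbatim the computation done in the preceding proposition for $G^+_{uv;w}$, but keeping track of where the $\mathbb{R}$-factor lives. Writing $o_G(u,u') = o_+ \otimes_{\Z/2} \breve{o}_G(u,u')$ and $o_{G^T}(\cdots) = o_G(u,v;w)$ together with the implanted $\mathbb{R}$-parameter, and using the two product identifications
\[
G^T_{uv;w} \cong G^T_{uu'} \times G_{u'v;w}, \qquad G^T_{uv;w} \cong G_{uu'} \times G^T_{u'v;w}
\]
(and similarly for the $v$- and $w$-slots), one reads off the comparison factor between the boundary orientation and the product orientation on each of the five faces. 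Cases (i), (iii), (iv) follow from replacing a single $G$ by $G^T$ in the computation of the preceding proposition and shifting dimensions accordingly; cases (ii) and (v) are new and require additionally commuting the $\mathbb{R}$-parameter past a $\breve{G}$-factor, which is why no $\breve{}$ appears on the $G^T$-side.

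Next, for each of the five faces I would perform the gluing argument exactly as in \Cref{KMT_orientation}. The gluing map
\[
\mathrm{Gl} : (-T,0) \times M_{G_1} \times M_{G_2} \longrightarrow M^T_{uv;w}(\alpha,\beta;\gamma)_1
\]
identifies a neighborhood of the boundary stratum with a product, and the orientation of $M^T_{uv;w}(\alpha,\beta;\gamma)_1$ is $o_z(T_{uv;w};\alpha,\beta;\gamma) \otimes_{\Z/2} o(G^T_{uv;w})$, while the boundary orientation on $M_{\partial G^T_{uv;w}}$ differs from the product orientation on $M_{G_1}\times M_{G_2}$ by the sign computed in the previous step, after one factorizes the $I$-orientation using the extended compatibility lemma (which introduces no extra sign because the pseudo-diagram hypothesis forces the intermediate critical points to have even Morse index, so parity does not intervene). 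Combining the sign from the orientation of the face in $G^{T,+}_{uv;w}$, the sign $(-1)^{1 - \dim G^T_{uv;w}}$ comparing $\partial M_G$ with $M_{\partial G}$, and the sign from commuting formal orientations past one another yields exactly the announced factors in (i)--(v).

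The main obstacle is bookkeeping rather than conceptual: the $\mathbb{R}$-factor from the implanted metric on $\mathbb{R}\times \mathring{S}$ shifts dimensions by one in several places and must be consistently placed either at the ``outer'' or ``inner'' end of the determinant identifications so that cases (ii) and (v) end up with the asserted factors $(-1)^{1+\dim G^T_{uu'}\cdot\dim G_{u'v;w}}$ and $(-1)^{\dim G_{uv;w'}(\dim G^T_{w'w}+1)}$, rather than with the naive guess obtained by copy-pasting from \Cref{KMT_orientation}. I would resolve this by fixing, once and for all, the convention that the $\mathbb{R}$-parameter associated to $T$ is inserted immediately after $o_+$ in the chain $o_+\otimes_{\Z/2}\breve{o}_G\otimes_{\Z/2}\cdots$, and then tracking the parity of all subsequent swaps carefully on a case-by-case basis. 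Once this is done, the five signed identities are forced.
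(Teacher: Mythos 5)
Your proposal matches the paper's intent exactly: the paper offers no written proof for this proposition, saying only that ``a similar argument implies the following proposition,'' and the similar argument in question is precisely the one you outline, namely the combination used in \Cref{KMT_orientation} and the preceding excision-without-$T$ proposition (boundary-face orientation of $G^{T,+}_{uv;w}$, the comparison factor between $\partial M_G$ and $M_{\partial G}$, the factorization of $I$-orientations via the composition compatibility, and the evenness of the intermediate Morse gradings coming from the pseudo-diagram hypothesis). You also correctly flag the one genuinely new bookkeeping feature relative to the two preceding propositions, namely that the $\R$-parameter from the implanted metric for $T$ can escape to either end of the neck, producing the two new face types (ii) and (v) in which the escaping factor carries $G^T$ without a breve, and that the sign in those two cases is governed by where the $T$-parameter sits in the determinant chain. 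I would only caution that the phrase ``requires additionally commuting the $\R$-parameter past a $\breve{G}$-factor'' should be stated more precisely as the observation that the implanted $T$-metric breaks translation invariance on the end it escapes to, so the translation $\R$-quotient on that factor is absorbed by the $T$-parameter (this is why it is $G^T$ rather than $\breve{G}^T$ or $\breve{G}$); but the overall plan and the convention you propose for tracking the extra $\R$-factor are the right ones.
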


\printbibliography
\printaddresses

\end{document}